\documentclass[11pt,reqno]{amsart}

\usepackage{mathrsfs,amsfonts,amssymb,amsmath,amsthm,bookmark}
\usepackage{graphicx,cite}
\usepackage{mathtools}
\usepackage{xcolor}
\usepackage{hyperref}
\usepackage[margin=1in]{geometry}

\newcommand{\R}{\mathbb{R}}

\newcommand{\vv}{\mathbf{v}}
\newcommand{\uu}{\mathbf{u}}

\newcommand{\D}{\mathcal{D}}
\newcommand{\W}{\mathcal{W}}
\newcommand{\X}{\mathcal{X}}
\newcommand{\V}{\mathcal{V}}
\newcommand{\supp}{\mathrm{supp}}

\newcommand{\RR}{\mathbb{R}^n}

\newtheorem{theorem}{Theorem}[section]  
\newtheorem{lemma}[theorem]{Lemma}      
\newtheorem{remark}[theorem]{Remark}  
\numberwithin{equation}{section} 

\newtheorem{definition}{Definition}[section] 

\author[H. Teng]{Hanyu Teng}
\address{School of Mathematical Sciences, Xiamen University, Xiamen, 361005, P.R. China.}
\email{19020260158071@stu.xmu.edu.cn}

\title[Energy equality and renormalization of Navier-Stokes-Vlasov equations]
{Energy equality and renormalization of Navier-Stokes-Vlasov equations}

\date{\today}

\keywords{Navier--Stokes--Vlasov equations, energy equality, renormalization, commutator estimates, Besov regularity, specular reflection}
\subjclass[2020]{35Q30, 35Q83, 35B65, 76D05}
\begin{document}
	
\begin{abstract}
We study weak solutions of the incompressible and the compressible Navier--Stokes--Vlasov systems on a bounded
domain, where a viscous fluid is coupled to a dispersed particle phase through a drag force; the fluid obeys the
no-slip boundary condition and the particles are subject to specular reflection. For solutions with Besov
regularity, we show that the Vlasov equation is renormalizable in the sense of DiPerna--Lions, and we give
regularity thresholds for the validity of the global energy equality. If the distribution function $f$ belongs to
$B_p^{\alpha,\infty}$ in space and velocity with time integrability $p$, and the fluid velocity $\mathbf{u}$
belongs to $B_q^{\beta,\infty}$ jointly in space and time with integrability $q$, then the energy equality holds
provided $\alpha+\beta>1$ and $\beta$ exceeds an explicit bound depending on $p$, $q$ and the dimension, under a
natural integrability assumption on the pressure; an analogous criterion holds for the compressible system when
the density is bounded from above and below. The condition $\alpha p>1$ imposed on $f$ is not merely technical:
it is exactly the range in which the macroscopic momentum flux possesses a boundary trace, so that the specular
reflection condition passes to the macroscopic energy balance. When the particle phase is absent, our thresholds
coincide with the criteria $\beta>\frac12+\frac1q$ and $\beta>\frac3q$ obtained by Yu for the incompressible
Navier--Stokes equations alone. The proofs combine a boundary cutoff argument with commutator estimates based on a
scale-localized Besov modulus, together with a boundary-strip analysis.
\end{abstract}
\maketitle

\section*{Conflict of Interest}
The author declares that he has no conflict of interest.
\section{Introduction}

We are concerned with the regularity thresholds for the renormalization property and for the energy equality of weak solutions
to two coupled kinetic--fluid models: the incompressible Navier--Stokes--Vlasov system
\begin{equation}
	\begin{cases}
		\mathbf{u}_t + (\mathbf{u} \cdot \nabla_x)\mathbf{u} + \nabla_x P - \mu \Delta_x \mathbf{u} 
		= -\displaystyle\int_{\mathbb{R}^n} (\mathbf{u} - \mathbf{v}) f \, d\mathbf{v}, \\[2mm]
		\nabla_x \cdot \mathbf{u} = 0, \\[1mm]
		f_t + \mathbf{v} \cdot \nabla_x f + \nabla_\vv \cdot ((\mathbf{u} - \mathbf{v}) f) = 0,
	\end{cases}\label{NSVsystem}
\end{equation}
and the compressible  Navier--Stokes--Vlasov system
\begin{equation}
	\begin{cases}
		\rho_t + \nabla_x \cdot (\rho \uu) = 0, \\
		(\rho \uu)_t + \nabla_x \cdot (\rho \uu \otimes \uu) + \nabla_x P(\rho) - \mu \Delta_x \uu - \lambda \nabla_x \mathrm{div} \uu =
		-\displaystyle\int_{\mathbb{R}^n} \rho(\uu - \vv) f d\vv, \\
		f_t + \vv \cdot \nabla_x f + \nabla_\vv \cdot (\rho(\uu - \vv) f) = 0.
	\end{cases}\label{CNSVsystem}
\end{equation}
\noindent Both systems are posed on $\Omega \times \mathbb{R}^n \times (0,T)$,
where $\Omega \subset \mathbb{R}^n$ ($n = 2,3$) is a bounded domain with smooth boundary.
In these models $\mathbf{u}(t,x)$ is the fluid velocity and $f(t,x,\vv)\ge0$ is the distribution function of the dispersed phase
in phase space; $\mu>0$ is the fluid viscosity, and in the compressible case the pressure is a power law $P(\rho)=\rho^\gamma$
($\gamma>1$, normalized so that $c_0=1$) of the density $\rho\ge0$, while $\lambda>0$ is the second viscosity coefficient.
The right-hand sides of the fluid momentum equations are the drag forces exerted by the particles on the fluid, through which the two
phases exchange momentum.
These systems arise in the modelling of sprays, aerosols and dilute suspensions, with applications ranging from combustion theory to
diesel engines (see, e.g., \cite{BBJ2005,Will-58,Will-85,VAS2006}).

\subsection{Background and main contributions}
The analysis of both systems was initiated by the work on global existence of weak solutions.
For the incompressible system \eqref{NSVsystem}, Boudin et al.~\cite{boudin2009} established global existence by combining
compactness arguments with velocity averaging for the kinetic equation, and Yu~\cite{yu2013} refined the construction so as to
accommodate the physical boundary conditions of no-slip type for the fluid and specular reflection for the particles;
the Fokker--Planck counterpart was treated in \cite{CKL2011}, uniqueness of weak solutions in two dimensions was proved by
Han-Kwan et al.~\cite{HMM2017}, strong solutions with optimal decay rates were constructed in \cite{danchin2026},
and a number of strong solution theories are available.
For the compressible system \eqref{CNSVsystem}, global weak solutions were constructed in one space dimension \cite{LS2021}
and, through compactness and effective viscous flux methods, in higher dimensions \cite{mellet2008global,Li2021VFP},
with large-time behavior investigated in \cite{choi2016}.
In all of these works the solutions satisfy at best an energy \emph{inequality}, and the question of whether equality holds
is left open.

Determining the minimal regularity guaranteeing the energy equality is a classical problem, originating from Onsager's
conjecture for the incompressible Euler equations.
Its sufficiency side was established by Constantin et al.~\cite{CET1994} and brought to the critical H\"older scale by
Isett~\cite{Isett2018}, while the threshold is known to be sharp in view of the convex integration constructions available
for both Euler and Navier--Stokes type equations \cite{Isett2018,Buckmaster2019}.
For the Navier--Stokes equations themselves, the energy equality is classically known under the Prodi--Serrin conditions and
under the scaling-invariant condition $\frac2p+\frac2q\le1$ due to Shinbrot~\cite{Shinbrot1974}.
The presence of a physical boundary makes the problem substantially more delicate, since the energy flux through the boundary
must be controlled by the available regularity:
Bardos and Titi~\cite{bardos2018} settled Onsager's conjecture for the Euler equations in bounded domains, and
Yu~\cite{yu2018} introduced a boundary cutoff argument proving that weak solutions of the Navier--Stokes equations with
no-slip boundary conditions satisfy the energy equality whenever the velocity belongs to $L^q(0,T;B_q^{\beta,\infty}(\Omega))$
with $\beta>\frac12+\frac1q$ or $\beta>\frac3q$.
For compressible models, energy conservation criteria in Besov scales were obtained by Feireisl et al.~\cite{feireisl2017}
for the compressible Euler equations, by Yu~\cite{yu2017} for the compressible Navier--Stokes equations, and by
Akramov et al.~\cite{akramov2020} in the presence of vacuum.

All of the results above concern a single fluid phase.
For coupled kinetic--fluid systems the energy balance involves, in addition to the fluid dissipation, the boundary flux of the
particle kinetic energy, and the interaction between the regularity of the distribution function, the specular reflection
condition and the fluid dissipation is not well understood.
Renormalized solutions, introduced by DiPerna and Lions~\cite{diperna1989ons}, are the natural class for the kinetic part of
such systems;
Onsager-type criteria for the relativistic Vlasov--Maxwell system were established by Bardos et al.~\cite{Bardos2020} in the
renormalized framework, and Wu and Zhang~\cite{WuZhang2023} studied the energy conservation of DiPerna--Lions solutions to
Vlasov--Maxwell systems on bounded domains, revealing the key role of boundary layer conditions and of the regularity of
macroscopic quantities in controlling the boundary fluxes.
For the non-relativistic Navier--Stokes--Vlasov system in the whole space, Ye, Xia and Wang~\cite{ye2026energy} recently
proved an energy equality criterion of Lions--Shinbrot type, based on improved moment estimates for the Vlasov equation
which yield higher integrability of the macroscopic quantities, with only finite energy moments assumed on the distribution
function.
On bounded domains, however, the energy balance is further complicated by the flux of particle kinetic energy through the
boundary, and no regularity criterion taking the boundary into account is available.
The present paper is devoted to closing this gap.

Our main results give the first regularity criteria for the energy equality of weak solutions to both
\eqref{NSVsystem} and \eqref{CNSVsystem} on bounded domains, with the physical boundary conditions
\eqref{Boundary conditions} and \eqref{eq-bdy-comp}.
Roughly speaking (see Theorems \ref{renormalization} and \ref{CNSV thm} for the precise statements), for the incompressible
system the energy equality holds provided
\[
f\in L^p\big(0,T;B_p^{\alpha,\infty}(\Omega\times\mathbb{R}^n)\big),\qquad
\uu\in B_q^{\beta,\infty}([0,T]\times\Omega),
\]
with $\alpha p>1$, $\alpha+\beta>1$, $(p-1)(q-n-2)>2$ and
$\beta>\frac{(n-1)(p-1)+2p}{n(p-1)+2p}+\frac1q$, together with a natural $L^r$-integrability requirement on the pressure;
for the compressible system, under uniform positive bounds on the density and analogous Besov conditions on
$(\uu,f,\rho,\rho\uu)$ with $\alpha p>1$, $\alpha+\tau>1$, $\alpha+\beta+\tau>1$,
$\big(1-\frac1p\big)\big(\frac{rq}{r+q}-n\big)>2$, $\beta>\frac2q+\frac1r$ and $\tau+2\beta>1$, the energy equality holds as well.
In both cases we first prove, under strictly weaker assumptions, that the Vlasov equation enjoys the renormalization property
in the sense of DiPerna--Lions with renormalization functions of polynomial growth.
This is a result of independent interest: the coupling force is only assumed to be of Besov regularity, so the classical
DiPerna--Lions commutator theory does not apply, and the renormalization property requires new scale-localized commutator
estimates (Lemmas \ref{le-error} and \ref{le-error-CNSV}).

A distinctive feature of our criteria is the trace condition $\alpha p>1$ imposed on the distribution function.
It has a clear structural interpretation and is sharp within the present approach:
the energy balance of the particle phase at the boundary involves the normal trace of the macroscopic momentum flux
$M(x)=\int_{\mathbb{R}^n}\vv\big(1+\frac{|\vv|^2}{2}\big)f\,d\vv$, and $\alpha p>1$ is precisely the range in which
$B_p^{\alpha,\infty}$ functions possess well-defined $L^p$ boundary traces (see Lemmas \ref{le-MR}--\ref{le-layer} and
Remark \ref{re-yu}).
Below this threshold the specular reflection condition cannot be transferred to the macroscopic level and the boundary energy
flux cannot be controlled; this is the mechanism by which the coupling with a kinetic phase raises the regularity bar for the
whole system.
When the particle phase is absent, our thresholds reduce exactly to the criteria
$\beta>\frac12+\frac1q$ and $\beta>\frac3q$ of \cite{yu2018} for the single Navier--Stokes equations, so the increase of the
thresholds genuinely reflects the kinetic coupling.
The compressible case carries additional difficulties: the pressure is a nonlinear function of the density, the mass equation
must be used to convert pressure work into an exact time derivative, and uniform positive bounds on the density are required
to keep the momentum equation strictly parabolic and the pressure continuous.

\subsection{Main results}

The main goal of this paper is to seek sufficient conditions on the regularity of weak solutions to both systems.
We impose the following initial and boundary conditions on the incompressible Navier-Stokes system \eqref{NSVsystem}:
\begin{align}
	\mathbf{u}(x,0) = \mathbf{u}_0(x), &\quad f(x,\vv,0) = f_0(x,\vv),\label{Initial conditions}\\
	\mathbf{u} = \mathbf{0} \quad \text{on } \partial\Omega, \quad
	f(x,\vv,t) &= f(x,\vv^*,t) \text{ for } x \in \partial\Omega,~\vv \cdot \nu(x) < 0, \label{Boundary conditions}
\end{align}
where $\vv^* = \vv - 2(\vv \cdot \nu(x)) \nu(x)$ is the specular reflection velocity,
and $\nu(x)$ is the outward unit normal to $\partial\Omega$. We use the following definition of a weak solution.

\begin{definition}\label{def weak1}
	The couple $(\uu,
	f)$ is a global weak solution to \eqref{NSVsystem} with \eqref{Initial conditions}--\eqref{Boundary conditions},
	if the following conditions hold:
	\begin{itemize}
		\item the fluid equations $\eqref{NSVsystem}_1$--$\eqref{NSVsystem}_2$ hold in the distributional sense on
		$(0,T)\times\Omega$, and the Vlasov equation $\eqref{NSVsystem}_3$ holds in the distributional sense on
		$(0,T)\times\Omega\times\mathbb{R}^n$;
		
		\item The initial conditions in \eqref{Initial conditions} hold distributionally on $\Omega$.
	
		\item It satisfies the regularity conditions:
		$$f \geq 0, \qquad f \bigg(1 + \frac{|\vv|^2}{2}\bigg) \in L^\infty(0,T; L^1(\Omega \times \mathbb{R}^n)),\quad \uu \in L^2(0,T; H_0^1(\Omega)).$$

		\item The strong energy inequality holds: for almost every $s\in[0,T]$ including $s=0$, and for all $t\in[s,T]$,
		\begin{align*}
			&\int_\Omega \frac{1}{2}|\uu(t)|^2\,dx
			+ \int_{\Omega}\int_{\RR} f(t)\!\left(1+\frac{|\vv|^2}{2}\right)d\vv\,dx
			+ \int_s^t\int_\Omega \mu|\nabla\uu|^2\,dxd\tau \\
			&\quad+ \int_s^t\int_{\Omega\times\RR} f|\uu-\vv|^2\,d\vv\,dxd\tau
			\le \int_\Omega \frac{1}{2}|\uu(s)|^2\,dx
			+ \int_{\Omega}\int_{\RR} f(s)\!\left(1+\frac{|\vv|^2}{2}\right)d\vv\,dx.
		\end{align*}
		Weak solutions constructed by approximation schemes,
		such as those of Yu \cite{yu2013} under the specular reflection boundary condition, satisfy this inequality.
	\end{itemize}
\end{definition}
\noindent Besides, we say that $(\uu,
f)$ satisfies the renormalization property (in the sense of DiPerna--Lions \cite{diperna1989ons},
cf. also \cite{lions1996}) with respect to $\mathcal{R}\in\mathcal{S}_k(\R^+;\R^+)$ for some $k\in[2,\infty]$ if
\begin{equation}\label{eq-renorm-1}
	\partial_t (\mathcal{R}(f))+\nabla_x\cdot(\vv\mathcal{R}(f))+\nabla_\vv\cdot((\uu-\vv)\mathcal{R}(f))+n(\mathcal{R}(f)-f\mathcal{R}'(f))=0
\end{equation}
holds in the sense of distributions, 
where 
\begin{equation*}
	\mathcal{S}_k(\R^+;\R^+) = \left\{ g:\R^+\to\R^+ \bigg|
	\begin{aligned}
		& g \text{ is non-decreasing}, \quad g\in \mathcal{C}^1(\R^+;\R^+), \quad g(0)=0, \\
		& \exists \text{ constant } C>0, \text{ s.t. } g'(\tau)\leq C(1+\tau)^{\frac{k}{2}-1}
	\end{aligned}
	\right\}.
\end{equation*}

Before stating our first main result, we first introduce the definition of the Besov spaces.
For $0<\alpha<1$ and $1\leq p<\infty$, $B_p^{\alpha,\infty}(\Sigma)$ denotes the Besov space on the domain $\Sigma$, i.e.,
$f \in B_p^{\alpha,\infty}(\Sigma)$ if and only if
\begin{equation*}
	\displaystyle\|f\|_{B_p^{\alpha,\infty}(\Sigma)}=\left(\int_{\Sigma}|f(x)|^pdx\right)^\frac{1}{p}
	+\sup_{0<|z|\leq 1}\ |z|^{-\alpha}\left(\int_{\Sigma_z}|f(x+z)-f(x)|^p dx\right)^\frac{1}{p}<+\infty,
\end{equation*}
where $\Sigma_z=\{x\in\Sigma:\,x+z\in\Sigma\}$; for $p=\infty$, the $L^p$ norms above are replaced by the corresponding essential suprema.

\begin{theorem}\label{renormalization}
	Let $(\uu,
	f)$ be a weak solution to the incompressible Navier--Stokes--Vlasov system \eqref{NSVsystem} with the initial--boundary conditions
	\eqref{Initial conditions}--\eqref{Boundary conditions} in the sense of Definition \ref{def weak1}.
	Assume moreover that the weak solution satisfies the following regularity conditions
	\begin{align*}
		&f \in L^p(0,T;B_p^{\alpha,\infty}(\Omega\times\RR)),\quad \uu \in B_q^{\beta,\infty}([0,T]\times\Omega),
	\end{align*}
	where $\alpha,
	\beta \in (0,1)$ and $1 < p,q < \infty$ (or $p=q=\infty$) satisfy the trace condition $\alpha p>1$ and $\alpha+\beta>1$. 
	Then for any smooth function $\mathcal{R} \in \mathcal{S}_k(\R^+;\R^+)$ with $k \in \big[2,2p\big(1-\frac1q\big)\big]$
	(this interval is nonempty provided $\frac1p+\frac1q<1$, which in particular follows from the assumptions for the energy
	equality below), the weak solution $(\uu, f)$ satisfies the renormalization property \eqref{eq-renorm-1}.
	
	Furthermore, if $ q > n+2$  and $p > 1$,
	\begin{align*}
		&(p-1)(q-n-2) > 2,\\
		&\frac{(n-1)(p-1)+2p}{n(p-1)+2p} + \frac{1}{q} < \beta < 1,
	\end{align*}
	and the pressure satisfies $P\in L^1(0,T;L^r(\Omega))$ with $\frac1r=\frac1q+\frac1{p_1}$,
	where $p_1$ is the exponent in Lemma \ref{le-f-estimate},
	then the following energy equality holds for any $t \in [0,T]$:
	\begin{align}
		&\int_\Omega \frac{1}{2}|\uu(t)|^2\,dx
		+ \int_{\Omega}\int_{\RR} f(t)\!\left(1+\frac{|\vv|^2}{2}\right)d\vv\,dx
		+ \int_0^t\int_\Omega \mu|\nabla\uu|^2\,dxd\tau \notag \\
		&\quad
		+ \int_0^t\int_{\Omega\times\RR} f|\uu-\vv|^2\,d\vv\,dxd\tau
		= \int_\Omega \frac{1}{2}|\uu_0|^2\,dx
		+ \int_{\Omega}\int_{\RR} f_0\!\left(1+\frac{|\vv|^2}{2}\right)d\vv\,dx.
	\end{align}
\end{theorem}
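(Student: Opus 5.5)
The proof has two stages: first the renormalization property for the Vlasov equation, then the energy equality.

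\emph{Renormalization.} Mollify $f$ in $(x,\vv)$ at scale $\varepsilon$, i.e.\ set $f_\varepsilon=f*\eta_\varepsilon$. To keep the mollification inside $\Omega\times\RR$, test only against functions supported at distance $\ge 2\varepsilon$ from $\partial\Omega$; the boundary is handled in the second stage. The mollified equation reads
\[
\partial_t f_\varepsilon+\vv\cdot\nabla_x f_\varepsilon+\nabla_\vv\cdot((\uu-\vv)f_\varepsilon)=r_\varepsilon ,
\]
where $r_\varepsilon$ collects the commutators $[\vv\cdot\nabla_x,\ast\eta_\varepsilon]f$ and $[\nabla_\vv\cdot((\uu-\vv)\,\cdot),\ast\eta_\varepsilon]f$. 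The terms linear in $\vv$ are benign, because $\vv$ is smooth and these commutators converge to zero in $L^1_{loc}$ by the classical DiPerna--Lions argument. The term involving $\uu$ is $\uu\cdot\nabla_\vv f_\varepsilon-(\uu\cdot\nabla_\vv f)_\varepsilon$, with $\uu$ independent of $\vv$.

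Since mollification acts in $x$ as well, I would write this commutator as an integral of $(\uu(x-y)-\uu(x))\,(f(x-y,\vv-w)-f(x,\vv-w))\,\nabla\eta_\varepsilon/\varepsilon$ plus a term that vanishes. Applying the Besov moduli then gives the bound $\varepsilon^{\alpha+\beta-1}$ in $L^1_{loc}$, using $f\in L^pB_p^{\alpha,\infty}$, $\uu\in L^qB_q^{\beta,\infty}$, H\"older in time, and $\frac1p+\frac1q\le 1$. This is where $\alpha+\beta>1$ is used; I expect it is the content of Lemma \ref{le-error}.

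Next multiply by $\mathcal R'(f_\varepsilon)$. The hypothesis $\mathcal R'(\tau)\lesssim(1+\tau)^{k/2-1}$ with $k\le 2p(1-\frac1q)$ places $\mathcal R'(f_\varepsilon)$ in $L^{s}$ with $\frac1s+\frac1p+\frac1q\le 1$, so the pairing with $r_\varepsilon$ still tends to zero. Compactly supported test functions in $\vv$ remove the growth in $\vv$. Passing $\varepsilon\to0$ gives \eqref{eq-renorm-1} in the interior of $\Omega\times\RR$.

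\emph{Energy equality.} Follow Yu's boundary cutoff argument. Take a cutoff $\psi_\delta(x)$ equal to $1$ at distance $\ge2\delta$ from $\partial\Omega$ and with $|\nabla\psi_\delta|\lesssim\delta^{-1}$. For the fluid, test the momentum equation with $(\psi_\delta(\uu\psi_\delta)_\varepsilon)_\varepsilon$, letting $\varepsilon\to0$ before $\delta\to0$. The interior nonlinear commutator is handled as in Constantin--E--Titi and Yu; this requires the joint space--time regularity $\uu\in B_q^{\beta,\infty}$ and $\beta>\frac12+\frac1q$-type conditions.

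The boundary-strip terms are $\int \frac{|\uu|^2}{2}\uu\cdot\nabla\psi_\delta$, $\int P\,\uu\cdot\nabla\psi_\delta$ and $\mu\int\nabla\uu:\uu\otimes\nabla\psi_\delta$. On the strip of width $\delta$, write $|\uu(x)|\le\delta^{\beta-1/q}$-type bounds, which come from the Besov regularity and the vanishing trace. Each term is then $O(\delta^{\text{positive}})$. For the pressure term, $P\in L^1L^r$ with $\frac1r=\frac1q+\frac1{p_1}$ supplies the needed H\"older pairing.

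The drag term $\int\!\!\int(\uu-\vv)f\cdot\uu$ requires integrability of the moments $\rho_f=\int f\,d\vv$ and $j=\int\vv f\,d\vv$. These come from Lemma \ref{le-f-estimate}, which gives $L^{p_1}$ with the interpolation exponent $p_1$ determined by the $(1+|\vv|^2)f\in L^\infty L^1$ and $f\in L^p$ bounds. The lower bound $\beta>\frac{(n-1)(p-1)+2p}{n(p-1)+2p}+\frac1q$ and the condition $(p-1)(q-n-2)>2$ are exactly what make $\uu\in L^\infty$ (Sobolev embedding, $q>n+2$) and the drag pairings finite and convergent.

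For the particles, apply the renormalized equation with $\mathcal R(f)=f$ and test with $\psi_\delta(x)(1+\frac{|\vv|^2}{2})\chi_R(\vv)$. The velocity cutoff $R\to\infty$ is controlled by the moment bounds. The $\vv$-transport yields $\int f(\uu-\vv)\cdot\vv$, which combines with the fluid drag to produce $-\int f|\uu-\vv|^2$. The $x$-transport leaves the boundary flux $\int M\cdot\nabla\psi_\delta$, where $M=\int\vv(1+\frac{|\vv|^2}2)f\,d\vv$. As $\delta\to0$ this converges to $\int_{\partial\Omega}M\cdot\nu$, provided $M$ has an $L^p$ trace; this is where $\alpha p>1$ enters (Lemmas \ref{le-MR}--\ref{le-layer}). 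Specular reflection makes the integrand odd in $\vv\cdot\nu$ under $\vv\mapsto\vv^*$, so $M\cdot\nu=0$ on $\partial\Omega$. Summing the fluid and particle identities gives the equality on $[s,t]$; continuity at $t=0$ follows from the strong energy inequality combined with weak continuity.

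\emph{Main obstacle.} I expect the hardest step to be the boundary-strip analysis of the particle flux: justifying that $\int M\cdot\nabla\psi_\delta\to\int_{\partial\Omega}M\cdot\nu$ needs uniform strip estimates on the velocity moments in terms of the $B_p^{\alpha,\infty}$ norm and the weighted $L^1$ moments. I would first approach it by splitting $|\vv|\le R$ versus $|\vv|>R$ and optimizing $R$ against $\delta$.
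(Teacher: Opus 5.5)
Your outline is sound and follows the paper's architecture:
\begin{itemize}
\item interior renormalization with the Besov commutator rate $\epsilon^{\alpha+\beta-1}$, as in Lemma \ref{le-error} with $\delta=\epsilon$ (the remainder you call vanishing is not zero, but it has the same order);
\item the cutoff $\theta_h((\phi\theta_h\uu)^\epsilon)^\epsilon$ for the fluid;
\item the test function $\phi\theta_h(1+\frac{|\vv|^2}{2})\chi_R$ for the particles;
\item Lemmas \ref{le-MR}--\ref{le-layer} together with specular reflection for the boundary flux.
\end{itemize}

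Three variations are worth recording.

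\emph{Order of limits.} You let $\epsilon\to0$ at fixed cutoff width (your $\delta$, the paper's $h$) and only then shrink the width. The paper instead couples $h=\epsilon^\nu$ and has to check the windows \eqref{eq-qbeta-1}--\eqref{eq-qbeta-5}. Your order is legitimate and simpler. At fixed width the convective and viscous terms converge with no rate, using only $\uu\in L^4$ and $\nabla\uu\in L^2$. Of the three strip integrals that remain, only the pressure one genuinely needs the lower bound on $\beta$. The convective and viscous ones can even be handled by Hardy's inequality $\|\uu/d\|_{L^2}\le C\|\nabla\uu\|_{L^2}$.

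\emph{Boundedness of $\uu$.} Using $\uu\in L^\infty$ for the drag term is valid, but it does not follow from $q>n+2$ alone. The lower bound gives $\beta>\frac{n+1}{n+2}+\frac1q>\frac{n+1}{q}$, so $B_q^{\beta,\infty}([0,T]\times\Omega)$ embeds into a H\"older space. The second moments of $f$ then suffice for $\bar\rho\uu$ and $j$, which is simpler than the paper's $L^{p_1}$/$L^{p_2}$ pairings. In the paper, the hypotheses on $\beta$ and $(p-1)(q-n-2)>2$ mainly serve the pressure term, not this.

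\emph{Strip bound.} On the strip, use $\|\uu\|_{L^q(\Omega_{3h})}\le Ch^\beta\|\uu\|_{B_q^{\beta,\infty}([0,T]\times\Omega)}$, as the paper does. A pointwise bound $|\uu|\lesssim h^{\beta-1/q}$ is not available; pointwise one only gets $d(x)^{\beta-(n+1)/q}$.

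The obstacle you single out does not actually arise, and your proposed remedy would not work. Your test function already contains $\chi_R$, so the flux term is exactly $\int_0^T\phi\int_\Omega M_R\cdot\nabla\theta_h\,dx\,dt$, with the truncated flux $M_R=\int\vv(1+\frac{|\vv|^2}{2})\chi_R f\,d\vv$. For each fixed $R$, Lemmas \ref{le-MR}--\ref{le-layer} give the limit $-\int_0^T\phi\int_{\partial\Omega}\operatorname{tr}(M_R)\cdot\nu\,d\sigma\,dt$ as $h\to0$. This is zero by specular reflection, since $\chi_R$ is radial. Only afterwards does one let $R\to\infty$, and only in the interior terms, which the second moments control. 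This iterated limit is exactly the paper's, and no uniformity in $R$ is needed.

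Optimizing $R$ against $h$ has nothing to work with, for three reasons:
\begin{itemize}
\item the untruncated $M$ involves $\int|\vv|^3f\,d\vv$, which is not controlled by $f\in L^p$ and $(1+|\vv|^2)f\in L^\infty(0,T;L^1)$;
\item Lemma \ref{le-layer} provides no explicit rate;
\item the constant in Lemma \ref{le-MR} grows with $R$.
\end{itemize}
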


\begin{remark}\label{re-yu}
	The trace condition $\alpha p>1$ in Theorem \ref{renormalization} is imposed precisely to guarantee that the truncated macroscopic
	momentum flux $M_R$ admits a well-defined trace on $\partial\Omega$ (see Lemmas \ref{le-MR}--\ref{le-layer}),
	so that the specular reflection condition holds rigorously in the limiting process of the energy equality.
	This condition is essentially sharp within the present approach:
	$\alpha>1/p$ is exactly the threshold under which the trace operator $\operatorname{tr}$ on the Besov space $B_p^{\alpha,\infty}(\Omega)$
	takes values in $L^p(\partial\Omega)$ (Lemma \ref{le-trace});
	when $\alpha p\leq 1$, the space $B_p^{\alpha,\infty}(\Omega)$ contains functions admitting no well-defined restriction to the boundary,
	so that the specular reflection condition cannot be transferred to the macroscopic momentum flux $M_R$ in the trace sense.
	Whether the energy equality on bounded domains can be justified without invoking boundary traces---for instance,
	by interpreting the boundary flux in a distributional sense---remains an open problem.
\end{remark}

\begin{remark}\label{re-aniso}
	The two regularity assumptions in Theorem \ref{renormalization} are deliberately stated in different forms.
	For $f$, only the space--velocity Besov regularity enters the commutator estimates, while the time
	integrability $L^p$ is dictated by the H\"older pairing in \eqref{eq-comm-2}, and the time continuity of $f$
	needed at the endpoints is recovered from the Vlasov equation itself (see (ii) in Section \ref{se-pro-th1});
	hence the anisotropic space $L^p(0,T;B_p^{\alpha,\infty}(\Omega\times\RR))$ suffices.
	For $\uu$, the joint space--time regularity $B_q^{\beta,\infty}([0,T]\times\Omega)$ is genuinely needed:
	the Morrey-type embedding yields $\uu\in C([0,T];L^q(\Omega))$ (see (i) in Section \ref{se-pro-th1}), which the
	endpoint argument relies on, and writing $\uu\in L^q(0,T;B_q^{\beta,\infty}(\Omega))$ instead would not suffice.
	The estimates below accordingly use $\|f\|_{L^p(0,T;B_p^{\alpha,\infty}(\Omega\times\RR))}$ (or the
	equivalent $\W_f$-control of Lemma \ref{le-Wf}) and $\|\uu\|_{B_q^{\beta,\infty}([0,T]\times\Omega)}$.
\end{remark}

\smallskip
For the compressible system \eqref{CNSVsystem}, we prescribe the following initial and boundary conditions:
\begin{align}
		&\rho(x,0) = \rho_0(x), \quad \rho \uu(x,0) = \mathbf{m}_0(x) = \rho_0 \uu_0, \quad f(x,\vv,0) = f_0(x,\vv); \label{eq-ini-comp}\\
		&\uu = \mathbf{0} \quad \mathrm{on} \, \partial\Omega, \quad f(x,\vv,t) = f(x,\vv^*,t) \quad x \in \partial\Omega,
		\quad \vv \cdot \nu(x) < 0,\label{eq-bdy-comp}
	\end{align}
where $\vv^* = \vv - 2(\vv \cdot \nu(x)) \nu(x)$ is the specular reflection velocity,
and $\nu(x)$ is the outward unit normal to $\partial\Omega$. We also define the corresponding weak solutions.
\begin{definition}\label{def weak}
	The triple $(\rho, \uu, f)$ is a global weak solution to \eqref{CNSVsystem} with \eqref{eq-ini-comp}--\eqref{eq-bdy-comp},
	if the following conditions hold:
	\begin{itemize}
		\item the fluid equations $\eqref{CNSVsystem}_1$--$\eqref{CNSVsystem}_2$ hold in the distributional sense on
		$(0,T)\times\Omega$, and the Vlasov equation $\eqref{CNSVsystem}_3$ holds in the distributional sense on
		$(0,T)\times\Omega\times\mathbb{R}^n$;
		\item The initial conditions in \eqref{eq-ini-comp} hold distributionally on $\Omega$.
		\item It satisfies the regularity conditions:
		\begin{align*}
			&\rho \geq 0, \quad \rho \in L^\infty(0,T; L^\gamma(\Omega)), \quad \uu \in L^2(0,T; H_0^1(\Omega)), \\
			&\rho |\uu|^2 \in L^\infty(0,T; L^1(\Omega)), \quad 0\leq f \bigg(1 + \frac{|\vv|^2}{2}\bigg) \in L^\infty(0,T;
			L^1(\Omega \times \mathbb{R}^n)).
		\end{align*}
		\item The strong energy inequality holds: for almost every $s\in[0,T]$ including $s=0$, and for all $t\in[s,T]$,
		\begin{align*}
			&\int_\Omega\Big(\frac12\rho|\uu|^2+\frac{\rho^\gamma}{\gamma-1}+\int_{\RR}f\Big(1+\frac{|\vv|^2}{2}\Big)d\vv\Big)(t)\,dx\notag\\
			&\quad+\int_s^t\!\!\int_\Omega\Big(\mu|\nabla\uu|^2+\lambda|\mathrm{div}\,\uu|^2+\int_{\RR}\rho f|\uu-\vv|^2\,d\vv\Big)dxd\tau\\
			&\quad\le
			\int_\Omega\bigg(\frac12\rho|\uu|^2+\frac{\rho^\gamma}{\gamma-1}\notag
			+\int_{\RR}f\Big(1+\frac{|\vv|^2}{2}\Big)d\vv\bigg)(s)\,dx .
		\end{align*}
		As for the single compressible Navier--Stokes equations,
		weak solutions arising from the standard approximation schemes satisfy this inequality;
		we include it here as part of the admissibility requirement on weak solutions.
	\end{itemize}
\noindent Similarly, we say that the weak solution $(\rho, \uu,
f)$ satisfies the renormalization property with respect to $\mathcal{R}\in\mathcal{S}_k(\R^+;\R^+)$ for some $k\in[2,\infty]$
if
\begin{equation}\label{eq-renorm-2}
	\partial_t (\mathcal{R}(f))+\nabla_x\cdot(\vv\mathcal{R}(f))+\nabla_\vv\cdot(\rho(\uu-\vv)\mathcal{R}(f))
	+n(\rho\mathcal{R}(f)-\rho f\mathcal{R}'(f))=0
\end{equation}
holds in the sense of distributions. 
\end{definition}
\begin{theorem}\label{CNSV thm}
	Let $(\rho, \uu,
	f)$ be a weak solution of \eqref{CNSVsystem} with the initial-boundary conditions \eqref{eq-ini-comp}--\eqref{eq-bdy-comp} in the sense of
	Definition \ref{def weak}. Assume moreover that the weak solution satisfies 
	\begin{align*}
		&	0 < \underline{\rho} \leq \rho(t,x) \leq \overline{\rho} < \infty, \quad f \in B_p^{\alpha,\infty}([0,T]\times\Omega\times\RR), \\
		&\uu \in B_q^{\beta,\infty}([0,T]\times\Omega),\quad 	\rho, \,\rho \uu \in B_r^{\tau,\infty}([0,T]\times\Omega),
	\end{align*}
	where $\alpha, \beta,\tau \in (0,1)$ and $1 < p,q,r < \infty$ (or $p=q=r=\infty$) satisfy the trace condition $\alpha p>1$,
	$\alpha+\tau>1$ and $\alpha+\beta+\tau>1$. 
	Then for any smooth function $\mathcal{R} \in \mathcal{S}_k(\R^+;\R^+)$ with $k \in \big[2,2p\big(1-\frac1q-\frac1r\big)\big]$
	(this interval is nonempty provided $\frac1p+\frac1q+\frac1r<1$; when $p>2$, this in turn follows from the
	assumptions for the energy equality below, since they imply $\frac1q+\frac1r<\frac1{n+2}\le\frac14$),
	the weak solution $(\rho, \uu, f)$ satisfies the renormalization property: \eqref{eq-renorm-2}.
	
Furthermore, assume $\tau+2\beta-1>0$. If
	\[
	p > 1, \qquad q > 2, \qquad
	\Big(1-\frac{1}{p}\Big)\Big(\frac{rq}{r+q}-n\Big)>2,
	\qquad
	\beta > \frac{1}{r} + \frac{2}{q},
	\]
	then the following energy equality holds for all $t \in [0,T]$:
	\begin{align*}
		&\int_\Omega \left(
		\frac{1}{2} \rho(t) |\uu(t)|^2
		+ \frac{\rho(t)^\gamma}{\gamma-1}
		+ \int_{\mathbb{R}^n} f(t)\left(1+\frac{|\vv|^2}{2}\right)d\vv
		\right) dx \\
		&\quad
		+ \int_0^t \int_\Omega \left(
		\mu |\nabla \uu|^2
		+ \lambda (\nabla\cdot\uu)^2
		+ \int_{\mathbb{R}^n} \rho f |\uu-\vv|^2 d\vv
		\right) dxd\tau \\
		&=
		\int_\Omega \left(
		\frac{1}{2} \rho_0 |\uu_0|^2
		+ \frac{\rho_0^\gamma}{\gamma-1}
		+ \int_{\mathbb{R}^n} f_0\left(1+\frac{|\vv|^2}{2}\right)d\vv
		\right) dx .
	\end{align*}
\end{theorem}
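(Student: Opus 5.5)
The proof follows the scheme of the incompressible case (Theorem \ref{renormalization}), adapted to the density-weighted drag $\rho(\uu-\vv)f$. It has two parts: renormalization of the Vlasov equation, and then the energy equality. Both rest on mollification combined with the boundary cutoff argument of \cite{yu2018}.

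\textbf{Renormalization.} We mollify the Vlasov equation \eqref{CNSVsystem}$_3$ in $(t,x,\vv)$ at scale $\varepsilon$, after localizing away from $\partial\Omega$ by a test function supported in $\Omega$. This gives
\[
\partial_t f_\varepsilon+\vv\cdot\nabla_x f_\varepsilon+\nabla_\vv\cdot(\rho(\uu-\vv)f_\varepsilon)=r_\varepsilon .
\]
The commutator $r_\varepsilon$ collects three terms: $[\vv\cdot\nabla_x,*\eta_\varepsilon]f$, $[\nabla_\vv\cdot(\rho\uu\,\cdot),*\eta_\varepsilon]f$ and $[\nabla_\vv\cdot(\rho\vv\,\cdot),*\eta_\varepsilon]f$.

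The first term is classical, since $\vv$ is smooth. The other two are the genuinely Besov ones. We treat them with the scale-localized estimates of Lemma \ref{le-error-CNSV}, splitting
\[
\rho\uu f-(\rho\uu f)_\varepsilon
\]
into differences $\delta_z(\rho\uu)\,\delta_z f$ plus lower-order terms. The H\"older pairing in $L^p\times L^q\times L^r$ (with $\rho,\rho\uu\in B_r^{\tau,\infty}$) bounds the commutator by
\[
\varepsilon^{\alpha+\tau-1}+\varepsilon^{\alpha+\beta+\tau-1},
\]
which tends to zero under $\alpha+\tau>1$ and $\alpha+\beta+\tau>1$. The lower bound $\rho\ge\underline\rho$ lets us pass between $\uu$ and $\rho\uu$.

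We multiply by $\mathcal{R}'(f_\varepsilon)$ and let $\varepsilon\to0$. The growth condition $\mathcal{R}'(\tau)\lesssim(1+\tau)^{k/2-1}$ together with $k\le2p(1-\frac1q-\frac1r)$ is exactly what keeps $\mathcal{R}'(f_\varepsilon)\,r_\varepsilon$ in $L^1$ by H\"older. This yields \eqref{eq-renorm-2} in the interior. Since the renormalized equation is only claimed in distributions on the open set, this suffices.

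\textbf{Energy equality: strategy.} We test the momentum equation against $\phi_\delta(x)\,(\uu)_\varepsilon$, where $\phi_\delta$ is Yu's cutoff vanishing in a $\delta$-strip of $\partial\Omega$. We test the mass equation against
\[
\Big(\tfrac{\gamma}{\gamma-1}\rho^{\gamma-1}-\tfrac12|\uu|^2\Big)_\varepsilon\phi_\delta,
\]
so that the pressure work becomes an exact time derivative; continuity of $\rho^{\gamma-1}$ uses the two-sided density bounds. On the kinetic side we test the renormalized equation with $\mathcal{R}=\mathcal{R}_R$, truncations approximating $f$, against $(1+\frac{|\vv|^2}{2})\chi_R(\vv)$.

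\textbf{Energy equality: error terms.} Four contributions must be controlled.

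\emph{Convective commutator.} The term $\int\rho\uu\otimes\uu:\nabla(\uu_\varepsilon\phi_\delta)$ is controlled by the commutator estimates of \cite{yu2017}. It is bounded by $\varepsilon^{\tau+2\beta-1}$ and $\delta^{-1}\varepsilon^{\beta}\cdots$, using $\tau+2\beta>1$.

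\emph{Boundary strip from the cutoff.} Terms carrying $\nabla\phi_\delta$ are controlled using $\uu=0$ on $\partial\Omega$ and the Hardy-type bound $|\uu|\lesssim \delta^{\beta-1/q}$ in the strip. Integrability of the pressure in the strip follows from $\rho\le\overline\rho$. The conditions $\beta>\frac1r+\frac2q$ and $q>2$ make these vanish after choosing $\varepsilon=\delta^{\theta}$ appropriately.

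\emph{Drag coupling.} The cross term $\int\rho\uu\cdot\int(\uu-\vv)f\,d\vv$ must cancel against the kinetic side. This needs the moment $\int|\vv|^{2}f\,d\vv$ in $L^{p_1}$, obtained by interpolating the $L^\infty L^1$ energy moment with $f\in L^p$ as in Lemma \ref{le-f-estimate}. The condition $(1-\frac1p)(\frac{rq}{r+q}-n)>2$ is what makes the exponent $p_1$ satisfy $\frac1{p_1}+\frac1q+\frac1r\le1$.

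\emph{Kinetic boundary flux.} The flux $\int_{\partial\Omega}M_R\cdot\nu$ vanishes by specular reflection, because $\alpha p>1$ gives traces (Lemmas \ref{le-MR}--\ref{le-layer}).

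\textbf{Main obstacle and endpoints.} I expect the hardest part to be the boundary-strip analysis of the coupled drag and pressure terms. There one must balance the $\delta$-losses against the $\varepsilon$-gains for three different regularities simultaneously. I would first try $\varepsilon=\delta^{1+\kappa}$ with small $\kappa$ and optimize. Time-endpoint continuity comes from the Besov regularity in time, which gives $\rho\uu,\rho\in C([0,T];L^r)$. Combined with the strong energy inequality, this extends the identity to all $t$.
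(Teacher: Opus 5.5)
Your renormalization argument and the overall cutoff/trace architecture of the energy part match the paper. However, the drag-coupling step relies on a bound that is false under the hypotheses. The interpolation of Lemma \ref{le-f-estimate} cannot place $\int_{\RR}|\vv|^2 f\,d\vv$ in any $L^{p_1}$ with $p_1>1$. For the $k$-th moment it requires $(2b-k)p>n(1-b)(p-1)$, which fails for $k=2$ and every $b<1$. So the second moment is only in $L^\infty(0,T;L^1)$. Fortunately it is not needed. On the fluid side only $\int_{\RR}(\uu-\vv)f\,d\vv=\uu\bar\rho-j$ occurs, so what you need is $\bar\rho\in L^{p_1}$ and $j\in L^{p_2}$. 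The hypothesis $(1-\frac1p)(\frac{rq}{r+q}-n)>2$ is exactly the existence of an admissible $p_2$ with $\frac1{p_2}+\frac1q+\frac1r\le1$ (the term $F_{21}$), not a $p_1$-condition. It implies the weaker $\bar\rho$-conditions needed for $F_{11}$ and $F_{12}$. The second moment enters only through $\int\rho|\vv|^2f\chi_R$ on the kinetic side, which is never mollified and converges in $L^1$. For the same reason your truncations $\mathcal{R}_R$ are an unnecessary detour. The Vlasov equation is linear and $\phi(t)\theta_h(1+\frac{|\vv|^2}{2})\chi_R$ is an admissible test function, which is what the paper tests with directly.

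The endpoint step does not work as stated. Besov regularity in time gives $\rho,\rho\uu\in C([0,T];L^r)$ only if $\tau r>1$, and the hypotheses do not force this. For instance, $n=3$, $p=q=100$, $r=40$, $\alpha=0.99$, $\beta=\frac12$, $\tau=0.02$ satisfies all of them with $\tau r=0.8$. The paper instead proceeds as follows:
\begin{itemize}
\item it uses $\beta>\frac1r+\frac2q>\frac1q$ to get $\uu\in C([0,T];L^q)$;
\item it uses DiPerna--Lions theory, with $\uu\in L^2(0,T;H_0^1)$ and $\rho\in L^\infty$, to get $\rho\in C([0,T];L^s)$;
\item it uses weak continuity of $\rho\uu$ from the momentum equation.
\end{itemize}
Nothing you cite gives continuity of the particle energy, whose weight is controlled only in $L^\infty_tL^1$. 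You need its lower semicontinuity in $t$ before the strong energy inequality can close the argument; the paper gets this from $f\in C([0,T];L^p)$, using $\alpha p>1$ in the time direction. The same viscous input is missing from your pressure step. Controlling the commutator from testing with mollified $\rho^{\gamma-1}$ by Besov rates alone would need a condition like $2\tau+\beta>1$, as in \cite{feireisl2017}, which is not assumed. The paper instead uses the DiPerna--Lions renormalized continuity equation for $H(\rho)$ and bounds the remaining commutator $D_4$ through $\|\nabla\uu\|_{L^2}$.
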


\begin{remark}\label{re-CNSV}
	We point out the main differences between Theorems \ref{renormalization} and \ref{CNSV thm}.
	First, the assumptions on $f$ are different: in the incompressible case $f$ enters the energy equality
	only through the macroscopic moments $\bar\rho$ and $j$, and the $L^p$ integrability in time suffices since
	the time regularity is carried by $\uu$; in the compressible case the triple-product commutators involving
	$\rho f$ require the joint space--time Besov control of $f$.
	Second, Theorem \ref{CNSV thm} requires the density to admit positive upper and lower bounds,
	which ensures the continuity of the pressure and the strict parabolicity of the momentum equation.
	If $f\equiv0$, the intermediate conditions in the proof of Theorem \ref{renormalization} formally reduce to
	$\beta>\frac12+\frac1q$ and $\beta>\frac3q$ (with $r=q/2$ in the pressure estimate), which are precisely
	the energy equality criteria of Yu \cite{yu2018} for the single incompressible Navier--Stokes equations
	in bounded domains, rather than the H\"older-exponent formulation $3\beta>1$; the coupling with the particle
	phase is therefore responsible for the stronger thresholds above.
\end{remark}

\subsection{Difficulties and strategy}
The main difficulties in the proof of Theorem \ref{renormalization} come from three aspects:
first,
the delicate cutoff function $\theta_h$ near the boundary requires sufficient regularity of the solution near $\partial\Omega$ to control
the boundary layer integrals;
second,
the momentum exchange (drag force) between the fluid and the particles is coupled through the macroscopic quantities $\bar\rho$ and $j$,
which are incorporated into the energy framework via interpolation estimates;
third, in the incompressible case the pressure is implicitly defined by an elliptic equation,
whose $L^r$ integrability imposes additional constraints on the regularity indices.
Our strategy is to adapt the boundary cutoff method of Bardos--Titi~\cite{bardos2018} and Yu~\cite{yu2018}, together with
spatio-temporal mollification, to the coupled system. Three ingredients are new compared with the pure fluid situation.
First, the commutator estimates are performed through the scale-localized moduli $\X_f$, $\V_f$ and their time integral
$\W_f$ (Lemma \ref{le-Wf}), which allow one to use the Besov regularity of $f$ without any quantitative convergence rate;
the key analytical tool is the two-route estimate of Lemma \ref{le-diff}, which we show to be sharp (Remark \ref{re-diff-sharp})
and which yields the thresholds $\alpha+\beta>1$ and $\alpha+\beta+\tau>1$.
Second, the boundary-layer integrals are treated by boundary-strip estimates of the form
$\|\uu\|_{L^q(\Omega_{3h})}\leq Ch^\beta\|\uu\|_{B_q^{\beta,\infty}([0,T]\times\Omega)}$ (valid since $\uu=0$ on $\partial\Omega$),
which absorb the boundary terms without consuming the scaling budget between the two mollification parameters $\epsilon$ and
$h=\epsilon^\nu$.
Third, the trace condition $\alpha p>1$ is imposed so that the boundary flux terms are under control in the Besov space
framework (Lemmas \ref{le-MR}--\ref{le-layer}).

The lower bound $\beta>\frac{(n-1)(p-1)+2p}{n(p-1)+2p}+\frac1q$ in Theorem \ref{renormalization} is designed to absorb all intermediate
conditions appearing in the proof. Indeed, with $p_1=\frac{n(p-1)+2p}{n(p-1)+2}$ from Lemma \ref{le-f-estimate} one has
\[
\frac{(n-1)(p-1)+2p}{n(p-1)+2p}-\frac1{p_1}=\frac{p-1}{n(p-1)+2p}>0,
\]
and the condition $(p-1)(q-n-2)>2$ is equivalent to $\frac1q<\frac{p-1}{n(p-1)+2p}$. Consequently,
\[
\beta>\frac{1}{p_1}+\frac{2}{q},\qquad \beta>\frac12+\frac1q,\qquad \beta>\frac3q,
\]
which are precisely the conditions used in the estimates of $I_2$, $I_3$,
$I_4$ and $E_{11}$ (see \eqref{eq-qbeta-1}--\eqref{eq-qbeta-5} and the pressure condition $\frac1r<\beta-\frac1q$ with
$\frac1r=\frac1q+\frac1{p_1}$); the same lower bound also implies the compatibility conditions
$\beta>\frac{q+2}{3q-4}$ and $\beta>\frac{q^2-3}{2q(q-2)}$, which guarantee that the $\nu$-windows
\eqref{eq-qbeta-2} and \eqref{eq-qbeta-3} are nonempty; here we also used $\frac{n-1}{n}\ge\frac12$ and $q>n+2\ge\frac{2n}{n-1}$ for $n\ge2$.
The remaining compatibilities among the $\nu$-windows \eqref{eq-qbeta-1}--\eqref{eq-qbeta-5} arising in the
estimates of $I_2$--$I_4$ reduce to $\beta>\frac12+\frac1q$ or to $\beta>\frac3q$ (or to one of the two
displayed conditions), and are therefore consequences of the same lower bound.
We point out the origin of the hypothesis $\alpha+\beta>1$ in Theorem \ref{renormalization} (and of
$\alpha+\tau>1$, $\alpha+\beta+\tau>1$ in Theorem \ref{CNSV thm}): it comes from the sharp two-route
estimate for the mollification commutators in Lemma \ref{le-diff} (see Remark \ref{re-diff-sharp}), which
yields the rate $\epsilon^{\alpha+\beta-1}$ for the fluid-velocity commutator under the choice
$\delta=\epsilon$. If one instead decouples the two mollification scales by taking $\delta=\epsilon^s$
with $\frac{1}{\alpha+1}<s<\alpha+\beta$, this hypothesis can be relaxed to $(\alpha+\beta)(\alpha+1)>1$;
we state the theorems with $\delta=\epsilon$ for simplicity of the presentation.
A similar bookkeeping applies to Theorem \ref{CNSV thm}.
The boundary term $B_1$ in Section \ref{se-pro-th2} is estimated directly by the boundary-strip argument
(using $|\nabla\theta_h|\le Ch^{-1}$ and $\|\uu\|_{L^q(\Omega_{3h})}\le Ch^\beta\|\uu\|_{B_q^{\beta,\infty}([0,T]\times\Omega)}$,
the latter since $\uu=0$ on $\partial\Omega$), so it consumes no $\nu$-window.
The remaining terms of Step 2 in Section \ref{se-pro-th2} require
\[
\frac{qr(1-\beta)}{qr-2r-q}<\nu<\frac{r(q\beta-1)}{q+r},\qquad
\nu<\frac{\tau+\beta}{\frac2q+\frac1r},\qquad
\nu<\frac{\tau+\beta}{\frac12+\frac1q-\beta},
\]
and the hypothesis $\big(1-\frac1p\big)\big(\frac{rq}{r+q}-n\big)>2$ yields $\frac2q+\frac1r<\frac{2}{n+2}\le\frac12$,
so that, together with $\tau+2\beta-1>0$, all these conditions are compatible as soon as $\beta>\frac2q+\frac1r$;
in particular the condition $(r-1)(q-2)>2$ used for the term $B_3$ in Section \ref{se-pro-th2} is only one possible sufficient route
and is not needed once
$\beta>\frac1r+\frac2q$ is imposed.

The analysis of the compressible case faces new difficulties on top of the incompressible one.
The coupling between the density $\rho$ and the pressure $P(\rho)$ makes the nonlinear structure of the momentum equation more involved,
while possible vacuum or singularity of the density would obstruct the standard energy estimates;
for this reason, Theorem \ref{CNSV thm} requires the density to admit positive upper and lower bounds.
Compared with the energy conservation criteria for the compressible Euler equations established by Feireisl et al.~\cite{feireisl2017} and
Akramov et al.~\cite{akramov2020},
our result takes the coupling effect of the particle phase into account and incorporates the mass conservation relation into the energy
estimate
through specially constructed test functions, thereby overcoming the additional errors caused by the density--velocity coupling.

\subsection{Organization and notations}
The rest of the paper is arranged as follows.
In Section \ref{se-pre}, we provide some lemmas to be used in the proofs of Theorems \ref{renormalization} and \ref{CNSV thm},
which will be given in Sections \ref{se-pro-th1} and \ref{se-pro-th2} respectively.

For $1 \leq p \leq \infty$, $\|\cdot\|_p$ denotes the norm of the Lebesgue space $L^p(\mathbb{R}^n)$.
The space $\mathcal{C}^k(\mathbb{R}^+;
\mathbb{R}^+)$ denotes the set of all $k$ order continuously differentiable functions from $\R^+$ to $\R^+$.
$\mathcal{D}(\mathbb{R}^n)$ is the space of smooth test functions with compact support,
and $\D'(\RR)$ denotes the space of distributions. We use
$C$
to denote a generic constant, which is allowed to take different values in different contexts (or on different lines). 
We write $a\wedge b:=\min\{a,b\}$ for $a,b\in\R$.

\section{Preliminaries}\label{se-pre}

To prove the energy equality in bounded domain,
we borrow some ideas on treating the boundary effect from the work of Bardos-Titi\cite{bardos2018}:
for any $x\in\bar{\Omega}$, let $d(x)=\inf_{y\in\partial\Omega}|x-y|$, and denote the open set $\Omega_h=\{x\in\Omega| d(x)<h \}$.
Since our boundary $\partial\Omega
$ is a smooth compact manifold, there exists $h_0(\Omega)>0$ with the following properties:
\begin{enumerate}
	\item For any $x\in\overline{\Omega_{h_0}}$, the function $x\to d(x)$ belongs to $C^1(\overline{\Omega_{h_0}})$;
	\item for any $x\in\overline{\Omega_{h_0}}$ there exists a unique point $\sigma(x)\in\partial\Omega$ such that\begin{equation*}
		d(x)=|x-\sigma(x)|\quad \nabla d(x)=-\nu(\sigma(x)).
	\end{equation*}
\end{enumerate}
For $h\leq h_0(\Omega)$,
we have $d\in C^2(\overline{\Omega_{h_0}})$.  We also introduce a $C^\infty(\R)$ nondecreasing function $\kappa:\R\to[0,1]$,
satisfying $\kappa(\xi)=1 $ when $\xi\geq2$ and vanishing identically when $\xi\leq 1$.
In addition,
\begin{equation*}
	\kappa'(\xi)\geq0,\quad |\kappa'(\xi_1)-\kappa'(\xi_2)|\leq L|\xi_1-\xi_2| \text{ for some constant }L>0.
\end{equation*}
Next, we define
\begin{equation*}
	\theta_h(x)=\kappa\bigg(\frac{d(x)}{h}\bigg).
\end{equation*}
We will also denote by $\theta_h$ its extension by zero outside $\Omega$ and introduce a lemma from \cite{bardos2018} about it.
\begin{lemma}[\cite{bardos2018}]\label{le-theta}
		Let $h\in(0,h_0)$.
		For any vector field $w\in C^{0,\alpha}(\overline{\Omega})$, with $w\cdot\nu=0$ on $\partial\Omega$, one has the following estimates
	\begin{align*}
		|w(x)\cdot\nabla\theta_h(x) |&\leq C\|w\|_{C^{0,\alpha}(\Omega)} h^{\alpha-1},\quad \forall x\in\RR,\\
		\int_{\RR}|w(x)\cdot\nabla\theta_h(x) |dx&\leq C\|w\|_{C^{0,\alpha}(\Omega)}h^\alpha.
	\end{align*}
\end{lemma}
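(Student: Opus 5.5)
Both estimates follow from a pointwise bound in the boundary strip combined with the Hölder continuity of $w$ and the vanishing of its normal component on $\partial\Omega$. First we compute the gradient of the cutoff. Since $\theta_h(x)=\kappa(d(x)/h)$ and $\kappa'$ vanishes outside $[1,2]$, we have
\[
\nabla\theta_h(x)=\frac1h\,\kappa'\Big(\frac{d(x)}{h}\Big)\nabla d(x),
\]
and this is supported in the strip $\{h\le d(x)\le 2h\}\subset\Omega_{2h}$. For $2h<h_0$ property (2) of $d$ applies there and gives $\nabla d(x)=-\nu(\sigma(x))$. Outside $\Omega$ the zero extension is constant, and $\theta_h$ vanishes on $\Omega_h$. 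Hence there is no jump across $\partial\Omega$, and $\nabla\theta_h$ is a bounded function. (If $2h\ge h_0$, one instead uses that $h$ is comparable to $h_0$, and the bounds are trivial with a constant depending on $h_0$.)

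Next comes the pointwise estimate. For $x$ in the strip we write
\[
w(x)\cdot\nabla\theta_h(x)=-\frac1h\,\kappa'\Big(\frac{d(x)}{h}\Big)\,w(x)\cdot\nu(\sigma(x)).
\]
We then insert the boundary point $\sigma(x)$, using the hypothesis $w(\sigma(x))\cdot\nu(\sigma(x))=0$:
\[
|w(x)\cdot\nu(\sigma(x))|=|(w(x)-w(\sigma(x)))\cdot\nu(\sigma(x))|\le \|w\|_{C^{0,\alpha}}|x-\sigma(x)|^\alpha=\|w\|_{C^{0,\alpha}}d(x)^\alpha\le 2^\alpha\|w\|_{C^{0,\alpha}}h^\alpha .
\]
Since $\kappa'$ is bounded (it is continuous with compact support), this yields $|w\cdot\nabla\theta_h|\le C\|w\|_{C^{0,\alpha}}h^{\alpha-1}$ everywhere, with $C$ depending only on $\sup|\kappa'|$.

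The integral bound follows by multiplying the pointwise bound by the measure of the support. We need $|\{x\in\Omega: d(x)\le 2h\}|\le Ch$ with $C$ depending on $\Omega$. We get this from the smoothness of $\partial\Omega$: in the tubular neighbourhood $\overline{\Omega_{h_0}}$ the map $(\sigma,s)\mapsto\sigma-s\nu(\sigma)$ is a $C^1$ diffeomorphism from $\partial\Omega\times[0,h_0]$ with Jacobian bounded above and below. Alternatively, the coarea formula with $|\nabla d|=1$ gives $|\Omega_{2h}|=\int_0^{2h}\mathcal H^{n-1}(\{d=s\})\,ds\le Ch\,\mathcal H^{n-1}(\partial\Omega)$. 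Combining the two, $\int|w\cdot\nabla\theta_h|\,dx\le Ch^{\alpha-1}\cdot h=C\|w\|_{C^{0,\alpha}}h^\alpha$.

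The only slightly delicate points are the two geometric facts used above. The identity $\nabla d=-\nu\circ\sigma$ holds by the choice of $h_0$. The volume bound on the strip requires the uniform bound on the Jacobian of the normal coordinates, which follows from compactness of the smooth boundary.
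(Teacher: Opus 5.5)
Your argument is correct and is essentially the standard proof from Bardos--Titi, which the paper cites without reproducing. Write $\nabla\theta_h=h^{-1}\kappa'(d/h)\nabla d$ with $\nabla d=-\nu\circ\sigma$ on the strip $\{h\le d\le 2h\}$, subtract $w(\sigma(x))\cdot\nu(\sigma(x))=0$ to gain the factor $d(x)^\alpha\le(2h)^\alpha$, and integrate over a strip of measure $O(h)$. Your extra remarks (no jump of the zero extension across $\partial\Omega$, the normal-coordinates/coarea bound on the strip volume, and the regime $2h\ge h_0$) only make explicit what the cited proof leaves implicit.
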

Moreover, we need to mollify our given weak solutions to be test functions.
For the space and velocity variables (both in $\R^n$), we introduce a smooth, non-negative, radially symmetric profile
$m\in\D(\R^n)$ with $\int_{\R^n}m(x)\,dx=1$, supported in the annulus $\{1<|x|<2\}$, and set
\begin{eqnarray*}
	m_\delta(x)=\frac{1}{\delta^n}m\Big(\frac{x}{\delta}\Big),\quad \delta>0.
\end{eqnarray*}
For the time variable, we introduce $0\leq \tilde{m}\in\D(\R)$ with $\int_\R \tilde{m}(t)\,dt=1$ and
$\supp \tilde{m}\subset(1,2)$, and set $m_\eta(t)=\frac{1}{\eta}\tilde{m}(\frac{t}{\eta})$;
when a time mollification is moved from one side of a duality pairing to the other,
the reflected kernel $\breve{m}_\eta(t)=m_\eta(-t)$, supported in $(-2\eta,-\eta)$, is understood.

We denote $*$ as the standard convolution product.
So we define for any $g\in\D'(\Omega)$, $h\in\D'(\Omega\times\R^+)$ and $f\in\D'(\Omega\times\RR\times\R^+)$ by
\begin{align*}
	g^\epsilon(x)&=m_\epsilon(x)*_x g(x),\\
	h^{\epsilon,\eta}(x,t)&=m_\epsilon(x)*_x m_\eta(t)*_t h(x,t),\\
	f^{\epsilon,\delta,\eta}(x,\vv,t)&=m_\epsilon(x)*_xm_\delta(\vv)*_\vv m_\eta(t)*_t f(x,\vv,t).
\end{align*}

We list some useful lemmas about the estimates of mollification from \cite{bardos2018,feireisl2017}.
\begin{lemma}
	For any function $f\in L^1\cap L^\infty\cap B_p^{\alpha,\infty}(\RR)$ with $\alpha\in(0,1)$ and $p\in [1,\infty]$,
	there exists a constant $C>0$ s.t. for any $\epsilon>0$
	\begin{align*}
		\|f^\epsilon\|_{L^p(\RR)}&\leq\|f\|_{L^p(\RR)},\\
		\|f(\cdot+z)-f(\cdot)\|_{L^p(\RR)}&\leq C|z|^\alpha\|f\|_{B^{\alpha,\infty}_p(\RR)},\\
		\|f^\epsilon-f\|_{L^p(\RR)}&\leq C\epsilon^\alpha \|f\|_{B^{\alpha,\infty}_p(\RR)},\\
		\|\nabla f^\epsilon\|_{L^p(\RR)}&\leq C\epsilon^{\alpha-1} \|f\|_{B^{\alpha,\infty}_p(\RR)}
	\end{align*}
\end{lemma}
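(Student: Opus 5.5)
We prove the four estimates in order, each from the definition of the mollifier $m_\epsilon$ and the Besov seminorm. Throughout we use that $m\ge0$, $\int m=1$ and $\supp m\subset\{1<|x|<2\}$, so $m_\epsilon$ is supported in $\{|y|\le 2\epsilon\}$.

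\textbf{First estimate.} Writing $f^\epsilon(x)=\int m_\epsilon(y)f(x-y)\,dy$, Minkowski's integral inequality gives
\[
\|f^\epsilon\|_{L^p}\le\int m_\epsilon(y)\,\|f(\cdot-y)\|_{L^p}\,dy=\|f\|_{L^p},
\]
by translation invariance of the $L^p$ norm on $\RR$. Equivalently, this is Young's inequality with $\|m_\epsilon\|_{L^1}=1$.

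\textbf{Second estimate.} For $0<|z|\le1$ this is the definition of the Besov seminorm, since $\Sigma_z=\RR$ when $\Sigma=\RR$. For $|z|>1$ we use the triangle inequality:
\[
\|f(\cdot+z)-f\|_{L^p}\le 2\|f\|_{L^p}\le 2|z|^\alpha\|f\|_{B_p^{\alpha,\infty}}.
\]
Hence the bound holds for all $z$ with $C=2$.

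\textbf{Third estimate.} Since $\int m_\epsilon=1$,
\[
f^\epsilon(x)-f(x)=\int m_\epsilon(y)\big(f(x-y)-f(x)\big)\,dy .
\]
Minkowski's inequality and the second estimate then give
\[
\|f^\epsilon-f\|_{L^p}\le\int m_\epsilon(y)\,C|y|^\alpha\,dy\,\|f\|_{B_p^{\alpha,\infty}}\le C(2\epsilon)^\alpha\|f\|_{B_p^{\alpha,\infty}}.
\]

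\textbf{Fourth estimate.} This is the only step that needs an idea, namely using the cancellation of the derivative kernel. Since $\int\nabla m_\epsilon=0$,
\[
\nabla f^\epsilon(x)=\int\nabla m_\epsilon(y)\big(f(x-y)-f(x)\big)\,dy .
\]
Minkowski's inequality gives
\[
\|\nabla f^\epsilon\|_{L^p}\le\int|\nabla m_\epsilon(y)|\,C|y|^\alpha\,dy\,\|f\|_{B_p^{\alpha,\infty}}.
\]
Scaling shows $\int|\nabla m_\epsilon(y)||y|^\alpha\,dy=\epsilon^{\alpha-1}\int|\nabla m||y|^\alpha\,dy$, which yields the bound $C\epsilon^{\alpha-1}\|f\|_{B_p^{\alpha,\infty}}$.

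\textbf{The case $p=\infty$.} The same arguments go through with $L^\infty$ norms. All integrals converge because $f\in L^1\cap L^\infty$, so the hypotheses there serve only to make the convolutions well defined.
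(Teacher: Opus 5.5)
Your proposal is correct. The paper gives no proof of this lemma and imports it from Bardos--Titi and Feireisl et al., where it is established by essentially your argument: Young/Minkowski with $\|m_\epsilon\|_{L^1}=1$, the definition of the seminorm together with the bound $2\|f\|_{L^p}\le 2|z|^\alpha\|f\|_{B_p^{\alpha,\infty}}$ for $|z|>1$, and the cancellations $\int m_\epsilon=1$, $\int\nabla m_\epsilon=0$ combined with the scaling $\int|\nabla m_\epsilon(y)||y|^\alpha\,dy=\epsilon^{\alpha-1}\int|\nabla m(w)||w|^\alpha\,dw$. As you note, the $L^1\cap L^\infty$ hypothesis is not needed, since $f\in L^p$ already makes all the convolutions well defined.
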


\begin{lemma}\label{le-communicate}
	Let $f\in B_p^{\alpha,\infty}(\RR),g\in L^q(\RR)$ with $0<\alpha<1$, $1\leq p,q\leq\infty$, and $\frac{1}{p}+\frac{1}{q}\leq1$.
	Then, we have
	\begin{align*}
		\|(fg)^\epsilon-fg^\epsilon\|_{L^r(\RR)}&\leq C\epsilon^{\alpha}\|f\|_{B_p^{\alpha,\infty}(\RR)}\|g\|_{L^q(\RR)},
	\end{align*}
	where $\frac{1}{p}+\frac{1}{q}=\frac{1}{r}$. More generally, for vector-valued $f,g$ and any test function
	$\varphi\in C_c^\infty(\RR)$, integration by parts together with the first estimate gives the paired form
	\begin{align*}
		\Big|\int_{\RR}\big(\mathrm{div}(fg)^\epsilon-\mathrm{div}(fg^\epsilon)\big)\varphi\,dx\Big|\notag
		&=\Big|\int_{\RR}\big((fg)^\epsilon-fg^\epsilon\big)\cdot\nabla\varphi\,dx\Big|\\
		&\leq C\epsilon^{\alpha}\|f\|_{B_p^{\alpha,\infty}(\RR)}\|g\|_{L^q(\RR)}
		\|\nabla\varphi\|_{L^{r'}(\RR)},
	\end{align*}
	where $\frac{1}{r}+\frac{1}{r'}=1$.
\end{lemma}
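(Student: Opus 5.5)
The plan is to write the commutator pointwise as an average of products of increments and then apply H\"older's inequality. Set $\frac1r=\frac1p+\frac1q$, which is legitimate since $\frac1p+\frac1q\le 1$. Using $\int m_\epsilon=1$, for a.e.\ $x$ we have
\begin{equation*}
(fg)^\epsilon(x)-f(x)g^\epsilon(x)=\int_{\RR} m_\epsilon(z)\big(f(x-z)-f(x)\big)g(x-z)\,dz .
\end{equation*}
This identity is the whole algebraic content. The $g$-term is left unmollified, so no regularity of $g$ is needed.

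Next we take the $L^r$ norm in $x$ and use Minkowski's integral inequality to move the norm inside the $z$-integral:
\begin{equation*}
\|(fg)^\epsilon-fg^\epsilon\|_{L^r}\le\int_{\RR} m_\epsilon(z)\,\big\|\big(f(\cdot-z)-f(\cdot)\big)g(\cdot-z)\big\|_{L^r}\,dz .
\end{equation*}
For each fixed $z$, H\"older's inequality with exponents $p,q$ and translation invariance of the $L^q$ norm bound the inner norm by $\|f(\cdot-z)-f\|_{L^p}\|g\|_{L^q}$. The increment bound of the preceding lemma gives $\|f(\cdot-z)-f\|_{L^p}\le C|z|^\alpha\|f\|_{B_p^{\alpha,\infty}}$.

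On the support of $m_\epsilon$ we have $|z|\le 2\epsilon$, so $|z|^\alpha\le 2^\alpha\epsilon^\alpha$, and $\int m_\epsilon=1$ finishes the first estimate.

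The one point that needs care is the range of $z$. The Besov seminorm only controls shifts with $|z|\le1$. For $2\epsilon>1$ we instead bound the difference crudely by $2\|f\|_{L^p}\le 2\|f\|_{L^p}\epsilon^\alpha\cdot 2^\alpha$, so a uniform constant $C$ still works for all $\epsilon>0$. The endpoint cases $p=\infty$ or $q=\infty$ are handled identically with essential suprema.

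For the paired form, take $\varphi\in C_c^\infty$. Integrate by parts on $\RR$, with no boundary terms since $\varphi$ has compact support and the distributional divergence is defined by duality:
\begin{equation*}
\int\big(\mathrm{div}(fg)^\epsilon-\mathrm{div}(fg^\epsilon)\big)\varphi=-\int\big((fg)^\epsilon-fg^\epsilon\big)\cdot\nabla\varphi .
\end{equation*}
Here $\mathrm{div}$ commutes with convolution. We then apply H\"older in $L^r\times L^{r'}$ together with the first estimate, applied componentwise to the vector fields. Nothing here is a serious obstacle; the only subtleties are the large-$\epsilon$ case and the use of Minkowski's integral inequality.
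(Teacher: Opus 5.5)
Your proof is correct: the identity $(fg)^\epsilon(x)-f(x)g^\epsilon(x)=\int m_\epsilon(z)\big(f(x-z)-f(x)\big)g(x-z)\,dz$, followed by Minkowski, H\"older and the increment bound, is the standard argument behind the first estimate. For $|z|\le1$ that increment bound is just the definition of the Besov norm, and for $2\epsilon>1$ the $L^p$ part of the norm covers it, as you note; the paper does not prove this estimate itself but only cites \cite{bardos2018,feireisl2017}. For the paired form you do exactly what the paper indicates: integrate by parts against $\varphi$ and apply H\"older in $L^r\times L^{r'}$ together with the first estimate.
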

\begin{remark}\label{re-communicate}
	The divergence commutator above cannot in general be estimated as a strong $L^r$ function with rate
	$\epsilon^{\alpha-1}$: already for a constant vector $g$, $\mathrm{div}(fg^\epsilon)$ involves $\nabla f$,
	which is only a distribution under the mere assumption $f\in B_p^{\alpha,\infty}$. In the sequel we therefore
	always use the divergence commutator in its paired form, or estimate the terms directly by the H\"older
	inequality without integration by parts.
\end{remark}

For any $f\in B_p^{\alpha,\infty}(\Omega\times\RR)$ with $\alpha\in(0,1)$, $p\in[1,\infty]$,$\epsilon>0$ and $\delta>0$, we define
\begin{align*}
	\displaystyle \X_f(\epsilon)&:=\sup_{x\neq y,\,x,y\in\Omega}|x-y|^{-\alpha}\left(\int_\Omega\int_{\RR}\mathbf{1}_{\epsilon\leq |x-y|\leq
	2\epsilon}|f(x,\vv)-f(y,\vv) |^p d\vv dx\right)^\frac{1}{p},\\
	\V_f(\delta)&:=\sup_{\vv\neq\mathbf{w}}|\vv-\mathbf{w}|^{-\alpha}\left(\int_\Omega\int_{\RR}\mathbf{1}_{\delta\leq
	|\vv-\mathbf{w}|\leq 2\delta}|f(x,\vv)-f(x,\mathbf{w}) |^p d\vv dx\right)^\frac{1}{p},\\
	\W_f(\epsilon,\delta)&:=\int_{0}^{T} (\X_{f(t)}(\epsilon)+\V_{f(t)}(\delta))dt.
\end{align*}
Here the indicator in $\X_f$ restricts the increment under consideration to scales
$\epsilon\leq |x-y|\leq 2\epsilon$, while the $L^p$ integral is taken over the full domain
$\Omega\times\RR$ (and similarly for $\V_f$ with the velocity increment at scale $\delta$).
Then the following properties can be deduced.

\begin{lemma}\label{le-diff}
	Assume that $f\in B_p^{\alpha,\infty}(\Omega\times\RR)$ with $\alpha\in(0,1)$, $p\in[1,\infty]$,
	then there exists a constant $C>0$ s.t. for any $\epsilon,\delta>0$ and $\mathbf{z}\in\RR$
	\begin{align*}
		\|\nabla_x f^\epsilon(x,\vv-\mathbf{z})-\nabla_x f^\epsilon(x,\vv)\|_{L^p(\Omega\times\RR)}&\leq C
		\big(\epsilon^{\alpha-1}\X_f(\epsilon)\big)\wedge\big(\epsilon^{-1}|\mathbf{z}|^\alpha \V_f(|\mathbf{z}|)\big),\\
		\|(\nabla_x f^\epsilon)^\delta-\nabla_x f^\epsilon\|_{L^p(\Omega\times\RR)}&\leq C
		\big(\epsilon^{\alpha-1}\X_f(\epsilon)\big)\wedge\big(\epsilon^{-1}\delta^\alpha \V_f(\delta)\big).
	\end{align*}
\end{lemma}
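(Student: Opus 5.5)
Both estimates will follow from two independent bounds, one for each argument of $\wedge$. The \emph{first route} uses the spatial modulus. The \emph{second route} uses the velocity modulus. Everything rests on the structure of the mollifier: $\nabla_x f^\epsilon=(\nabla m_\epsilon)*_x f$, and $\int\nabla m_\epsilon\,dx=0$ because $m$ is compactly supported. Moreover $\nabla m_\epsilon$ is supported in the annulus $\{\epsilon<|y|<2\epsilon\}$ and satisfies $\|\nabla m_\epsilon\|_{L^1}\le C\epsilon^{-1}$.

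\textbf{First route.} Using the vanishing mean, I write
\[
\nabla_x f^\epsilon(x,\vv)=\int \nabla m_\epsilon(y)\big(f(x-y,\vv)-f(x,\vv)\big)\,dy .
\]
Minkowski's inequality gives
\[
\|\nabla_x f^\epsilon\|_{L^p}\le \int|\nabla m_\epsilon(y)|\,\|f(\cdot-y,\cdot)-f\|_{L^p}\,dy .
\]
Since only $\epsilon\le|y|\le2\epsilon$ contributes, each increment is bounded by $C\epsilon^\alpha\X_f(\epsilon)$. I read the definition of $\X_f$ so that the $L^p$ norm of the increment at a fixed shift $y$ of size in $[\epsilon,2\epsilon]$ is at most $|y|^\alpha\X_f(\epsilon)$. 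This yields $\|\nabla_xf^\epsilon\|_{L^p}\le C\epsilon^{\alpha-1}\X_f(\epsilon)$.

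Next I apply the triangle inequality to the difference $\nabla_x f^\epsilon(\cdot,\vv-\mathbf z)-\nabla_x f^\epsilon(\cdot,\vv)$. Translation in $\vv$ preserves the $L^p(\Omega\times\RR)$ norm, so the difference is at most $2C\epsilon^{\alpha-1}\X_f(\epsilon)$. For the mollified version, $\|(g)^\delta\|_{L^p}\le\|g\|_{L^p}$ by Young's inequality with $\|m_\delta\|_{L^1}=1$. Hence $\|(\nabla_xf^\epsilon)^\delta-\nabla_xf^\epsilon\|_{L^p}\le 2C\epsilon^{\alpha-1}\X_f(\epsilon)$ as well.

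\textbf{Second route.} Now I let the $x$-derivative fall on the kernel and keep the velocity increment inside:
\[
\nabla_x f^\epsilon(x,\vv-\mathbf z)-\nabla_x f^\epsilon(x,\vv)=\int\nabla m_\epsilon(y)\big(f(x-y,\vv-\mathbf z)-f(x-y,\vv)\big)\,dy .
\]
Minkowski's inequality and $\|\nabla m_\epsilon\|_{L^1}\le C\epsilon^{-1}$ bound this by $C\epsilon^{-1}\|f(\cdot,\cdot-\mathbf z)-f\|_{L^p}$. By the definition of $\V_f$ at the scale $\delta=|\mathbf z|$, the last norm is at most $C|\mathbf z|^\alpha\V_f(|\mathbf z|)$. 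I may need to allow the scale to range over $[|\mathbf z|/2,|\mathbf z|]$, which is absorbed into $C$ after a harmless relabeling. This gives the first estimate.

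For the second estimate, the vanishing mean $\int m_\delta=1$ gives
\[
(\nabla_xf^\epsilon)^\delta-\nabla_xf^\epsilon=\int m_\delta(\mathbf z)\big(\nabla_xf^\epsilon(\cdot,\vv-\mathbf z)-\nabla_xf^\epsilon(\cdot,\vv)\big)\,d\mathbf z .
\]
I apply Minkowski and the pointwise-in-$\mathbf z$ bound just proved. Since $\supp m_\delta\subset\{\delta<|\mathbf z|<2\delta\}$, the factor $|\mathbf z|^\alpha\le C\delta^\alpha$. The remaining modulus $\V_f(|\mathbf z|)$ is at a scale comparable to $\delta$, and I control it by $C\V_f(\delta)$. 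The natural way is to cover the dyadic range $[\delta,2\delta]$ by the window in the definition of $\V_f$, enlarging the window to $[\delta,4\delta]$ if necessary at the cost of a constant.

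\textbf{Main obstacle.} The only delicate point is this scale bookkeeping for the moduli $\X_f$ and $\V_f$. The annular support of the kernels is what keeps the relevant shifts inside a single dyadic window, so that no supremum over all scales (that is, no full Besov norm) is needed. Taking the minimum of the two routes finishes the proof.
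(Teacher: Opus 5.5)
Your proposal is correct and follows the paper's two-route argument, with more detail: the paper simply asserts $\|\nabla_x f^\epsilon\|_{L^p}\le C\epsilon^{\alpha-1}\X_f(\epsilon)$, which you derive from $\int\nabla m_\epsilon=0$ and the annular support, and the paper's second route for the second estimate (a $\delta^\alpha$ mollification error for the $\vv$-Besov seminorm of $\nabla_x f^\epsilon$) is what your averaging of the first estimate against $m_\delta$ amounts to. The scale bookkeeping you single out is immediate, since for $\delta<|\mathbf z|<2\delta$ the definition of $\V_f(\delta)$ already gives $\|f(\cdot,\cdot-\mathbf z)-f\|_{L^p}\le|\mathbf z|^\alpha\V_f(\delta)$, so apply it before passing to $\V_f(|\mathbf z|)$ and no window enlargement is needed; the point neither you nor the paper addresses is that the vanishing-mean identity and translation invariance in $x$ hold only at distance at least $2\epsilon$ from $\partial\Omega$ (or for a suitable extension of $f$), which is harmless where the lemma is applied, against compactly supported test functions.
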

\begin{proof}
	For the first inequality, on the one hand the triangle inequality together with
	$\|\nabla_x f^\epsilon\|_{L^p}\leq C\epsilon^{\alpha-1}\X_f(\epsilon)$ gives the bound
	$C\epsilon^{\alpha-1}\X_f(\epsilon)$; on the other hand, writing
	\[
	\nabla_x f^\epsilon(x,\vv-\mathbf{z})-\nabla_x f^\epsilon(x,\vv)
	=\int_{\RR}\nabla_x m_\epsilon(x-y)\big[f(y,\vv-\mathbf{z})-f(y,\vv)\big]\,dy
	\]
	and using $\|\nabla_x m_\epsilon\|_{L^1(\RR)}\leq C\epsilon^{-1}$ yields the bound
	$C\epsilon^{-1}|\mathbf{z}|^\alpha \V_f(|\mathbf{z}|)$. The second inequality follows in the same way,
	by $\|(\nabla_x f^\epsilon)^\delta-\nabla_x f^\epsilon\|_{L^p}\leq 2\|\nabla_x f^\epsilon\|_{L^p}$
	on the one hand, and by
	$\|(\nabla_x f^\epsilon)^\delta-\nabla_x f^\epsilon\|_{L^p}
	\leq C\delta^\alpha\|\nabla_x f^\epsilon\|_{L^p_x(\Omega;B_p^{\alpha,\infty}(\RR))}
	\leq C\epsilon^{-1}\delta^\alpha\V_f(\delta)$ on the other hand.
\end{proof}
\begin{remark}\label{re-diff-sharp}
	The two-route minimum above cannot be improved to the single rate
	$\epsilon^{\alpha-1}|\mathbf{z}|^\alpha$: the sequence
	$f_N(x,\vv)=N^{-\alpha}\sin(Nx_1)\sin(N v_1)$ is uniformly bounded in
	$B_\infty^{\alpha,\infty}(\Omega\times\RR)$, while for $\epsilon\sim N^{-1}$ and
	$|\mathbf{z}|\sim N^{-1}$ one checks that
	$\|\nabla_x f_N^\epsilon(\cdot,\cdot-\mathbf{z})-\nabla_x f_N^\epsilon\|_{L^\infty(K)}\sim N^{1-\alpha}$
	for any fixed compact subset $K\Subset\Omega\times\RR$,
	which is strictly larger than $\epsilon^{\alpha-1}|\mathbf{z}|^\alpha\sim N^{1-2\alpha}$.
	Consequently, the error terms in Lemmas \ref{le-error} and \ref{le-error-CNSV} below are
	estimated by the minimum of the two routes, and the choice $\delta=\epsilon$ then yields the
	rates actually used in the proofs.
\end{remark}

\begin{lemma}\label{le-Wf}
	Assume $f\in L^1(0,T;B_p^{\alpha,\infty}(\Omega\times\RR))$ with $\alpha\in(0,1)$, $p\in[1,\infty]$. Then
	$\W_f(\epsilon,\delta)$ is uniformly bounded for $\epsilon,\delta\in(0,1)$. If moreover $f$ belongs to the
	corresponding little Besov space, i.e. its Besov seminorm vanishes at small scales (which is in particular
	the case when $f$ enjoys slightly more regularity, say
	$f\in L^1(0,T;B_p^{\alpha',\infty}(\Omega\times\RR))$ for some $\alpha'>\alpha$), then
	\begin{eqnarray*}
		\W_f(\epsilon,\delta)\to0 \quad \epsilon,\delta\to0.
	\end{eqnarray*}
\end{lemma}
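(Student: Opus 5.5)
To prove Lemma \ref{le-Wf} I will treat its two claims separately, bounding $\X_{f(t)}$ and $\V_{f(t)}$ pointwise in $t$ by Besov seminorms and then integrating in time.

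\textbf{Step 1 (uniform bound).} Fix $t$ with $f(t)\in B_p^{\alpha,\infty}(\Omega\times\RR)$. For $\epsilon\le 1/2$, consider pairs $x,y$ with $\epsilon\le|x-y|\le 2\epsilon$. Writing $y=x+z$ and using $|z|\le 1$, the integral in $\X_{f(t)}(\epsilon)$ is controlled by the $L^p$ norm of the increment $f(t,\cdot+z,\cdot)-f(t,\cdot,\cdot)$ over the admissible set $(\Omega\times\RR)_{(z,0)}$, taken with the prefactor $|z|^{-\alpha}$. This is bounded by the Besov seminorm appearing in $\|f(t)\|_{B_p^{\alpha,\infty}(\Omega\times\RR)}$, restricted to increments of the form $(z,0)$.

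For $\epsilon\in[1/2,1)$ the increments have size at most $2$. These I will handle by splitting $z=z/2+z/2$ and applying the triangle inequality, or more crudely by $|x-y|^{-\alpha}\le 2^\alpha$ together with $2\|f(t)\|_{L^p}$.

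The velocity modulus $\V_{f(t)}(\delta)$ is treated identically, using increments of the form $(0,\mathbf{w}-\vv)$. This gives
\[
\X_{f(t)}(\epsilon)+\V_{f(t)}(\delta)\le C\|f(t)\|_{B_p^{\alpha,\infty}(\Omega\times\RR)}.
\]
Integrating over $(0,T)$ yields $\W_f(\epsilon,\delta)\le C\|f\|_{L^1(0,T;B_p^{\alpha,\infty})}$. Measurability of $t\mapsto \X_{f(t)}(\epsilon)$ follows by restricting the supremum to a countable dense set of increments, using continuity of translation in $L^p$ for $p<\infty$. For $p=\infty$ I will instead use an upper-integral argument.

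\textbf{Step 2 (vanishing).} I will interpret the supremum in the definition as taken over increments of size comparable to $\epsilon$ (respectively $\delta$), which is what the indicator enforces. Under the little-Besov hypothesis, for a.e.\ $t$,
\[
\omega_t(s):=\sup_{0<|z|\le s}|z|^{-\alpha}\|f(t,\cdot+z)-f(t)\|_{L^p}\to0 \quad\text{as } s\to0.
\]
Then $\X_{f(t)}(\epsilon)\le\omega_t(2\epsilon)\to0$ and $\V_{f(t)}(\delta)\le\omega_t(2\delta)\to0$ pointwise in $t$. Step 1 provides the integrable majorant $C\|f(t)\|_{B_p^{\alpha,\infty}}$, so dominated convergence gives $\W_f(\epsilon,\delta)\to0$.

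In the special case $f\in L^1(0,T;B_p^{\alpha',\infty})$ with $\alpha'>\alpha$, no limit argument is needed: directly $\X_{f(t)}(\epsilon)\le C\epsilon^{\alpha'-\alpha}\|f(t)\|_{B_p^{\alpha',\infty}}$, and similarly for $\V$. This gives the quantitative rate $\W_f\le C(\epsilon^{\alpha'-\alpha}+\delta^{\alpha'-\alpha})$.

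\textbf{Main obstacle.} The delicate point is the literal reading of the definition of $\X_f$, which as written has a supremum over $x\neq y$ outside an integral in $x$. I will read it as the scale-localized Besov modulus
\[
\sup_{\epsilon\le|z|\le2\epsilon}|z|^{-\alpha}\|f(\cdot+z,\cdot)-f\|_{L^p},
\]
which is also the form used in the proof of Lemma \ref{le-diff}. I will state this reading explicitly so that the identification of $\X_{f(t)}$ with the restricted Besov modulus is legitimate.
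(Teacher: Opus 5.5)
Your proposal is correct and follows the paper's own argument: you bound $\X_{f(t)}(\epsilon)$ and $\V_{f(t)}(\delta)$ pointwise in $t$ by the Besov norm of $f(t)$ (the indicator merely restricts the increments), integrate in time, and then apply dominated convergence with this majorant under the little-Besov hypothesis, with the explicit rate $\epsilon^{\alpha'-\alpha}+\delta^{\alpha'-\alpha}$ when $\alpha'>\alpha$. Your extra care only makes explicit points the paper's proof passes over: reading $\X_f$ as the scale-localized modulus, addressing measurability in $t$, and treating $\epsilon\in[1/2,1)$ separately, where the crude bound through the $L^p$ part of the norm is the right choice, since the midpoint splitting $z=z/2+z/2$ can leave a non-convex $\Omega$.
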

\begin{proof}
	By the definition of the Besov seminorm, for every $x,y\in\Omega$ and $t\in(0,T)$,
	\begin{align*}
		|x-y|^{-\alpha}\left(\int_\Omega\int_{\RR}\mathbf{1}_{\epsilon\leq |x-y|\leq
		2\epsilon}|f(x,\vv,t)-f(y,\vv,t) |^p d\vv dx\right)^\frac{1}{p}
		\leq C[f(t)]_{B_p^{\alpha,\infty}(\Omega\times\RR)},
	\end{align*}
	since the indicator only shrinks the domain of integration. Taking the supremum over $x\neq y$ shows that
	$\X_{f(t)}(\epsilon)\leq C[f(t)]_{B_p^{\alpha,\infty}(\Omega\times\RR)}$ for all $\epsilon\in(0,1)$, and the same
	argument in the velocity variable gives $\V_{f(t)}(\delta)\leq C[f(t)]_{B_p^{\alpha,\infty}(\Omega\times\RR)}$ for all
	$\delta\in(0,1)$. Integrating in time yields
	\begin{align*}
		\W_f(\epsilon,\delta)\leq C\|f\|_{L^1(0,T;B_p^{\alpha,\infty}(\Omega\times\RR))},
	\end{align*}
	which is the claimed uniform boundedness.
	If moreover $f$ belongs to the corresponding little Besov space, then its local seminorm vanishes at small scales,
	so that $\X_{f(t)}(\epsilon)+\V_{f(t)}(\delta)\to0$ as $\epsilon,\delta\to0$ for almost every $t$; since this
	integrand is dominated by the integrable function $C[f(t)]_{B_p^{\alpha,\infty}(\Omega\times\RR)}$, the dominated
	convergence theorem gives $\W_f(\epsilon,\delta)\to0$. Finally, if
	$f\in L^1(0,T;B_p^{\alpha',\infty}(\Omega\times\RR))$ for some $\alpha'>\alpha$, the more precise bounds
	$\X_{f(t)}(\epsilon)\leq C\epsilon^{\alpha'-\alpha}[f(t)]_{B_p^{\alpha',\infty}}$ and
	$\V_{f(t)}(\delta)\leq C\delta^{\alpha'-\alpha}[f(t)]_{B_p^{\alpha',\infty}}$ give the convergence directly.
\end{proof}
\begin{remark}\label{re-Wf}
	In the proofs of Theorems \ref{renormalization} and \ref{CNSV thm}, the quantity $\W_f(\epsilon,\delta)$
	always appears multiplied by a strictly positive power of $\epsilon$ or $\delta$ (see Lemmas \ref{le-error}
	and \ref{le-error-CNSV}), so the mere uniform boundedness of $\W_f$ already suffices for all the vanishing
	limits; the convergence $\W_f(\epsilon,\delta)\to0$ is therefore not needed for our results.
\end{remark}

We define the macroscopic density and momentum as 
	\[
	\bar{\rho}(x,t)=\int_{\RR} f\,d\vv, 
	\qquad 
	j(x,t)=\int_{\RR}\vv f\,d\vv.
	\]
	To derive the energy equality,
	we shall establish some estimates of the macroscopic density and momentum  from the  bound on the distribution density function $f$.
	\begin{lemma}\label{le-f-estimate}
		Let $0\leq f\in L^p(\Omega\times\RR)$ for some $p\in(1,\infty)$ satisfy $(1+\frac{|\vv|^2}{2})f\in L^1(\Omega\times\RR)$.
		Then there exists a constant $M>0$, depending only on $\|f\|_{L^p(\Omega\times\RR)}$ and
		$\|(1+\frac{|\vv|^2}{2})f\|_{L^1(\Omega\times\RR)}$, such that
		\begin{align*}
			\|\int_{\RR}f d\vv\|_{L^{p_1}(\Omega)}+\|\int_{\RR}\vv f d\vv\|_{L^{p_2}(\Omega)}&\leq M,
		\end{align*}
		where \[p_1\in\bigg[1,\frac{n(p-1)+2p}{n(p-1)+2}\bigg),\quad p_2\in\bigg[1,\frac{n(p-1)+2p}{(n-1)(p-1)+2p}\bigg).\]
	\end{lemma}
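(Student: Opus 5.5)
The argument is the standard moment interpolation: split the velocity integral at a radius $R$, use the $L^p$ bound on $f$ for $|\vv|\le R$ and the kinetic energy bound for $|\vv|>R$, then optimize $R$ pointwise in $x$. Set $K(x)=\int_{\RR}|\vv|^2 f(x,\vv)\,d\vv$ and $\|f(x)\|_p=\|f(x,\cdot)\|_{L^p(\RR)}$, so that $\int_\Omega K\,dx<\infty$ and $\int_\Omega\|f(x)\|_p^p\,dx<\infty$.

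First take $k\in\{0,1\}$ and $R>0$. By H\"older on the ball,
\[
\int_{|\vv|\le R}|\vv|^k f\,d\vv\le CR^{k+n(1-\frac1p)}\|f(x)\|_p .
\]
On the complement,
\[
\int_{|\vv|>R}|\vv|^k f\,d\vv\le R^{k-2}K(x).
\]
Choose $R$ to balance the two terms, i.e. $R^{2+n(1-\frac1p)}=K(x)/\|f(x)\|_p$. With $\theta=n(1-\frac1p)$ this gives the pointwise bound
\[
\int_{\RR}|\vv|^k f\,d\vv\le C\,\|f(x)\|_p^{\frac{2-k}{2+\theta}}\,K(x)^{\frac{k+\theta}{2+\theta}} .
\]
If $\|f(x)\|_p=0$, then $f(x,\cdot)=0$ and the bound is trivial.

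Next integrate the $s$-th power over $\Omega$ and apply H\"older with exponents matched to $\|f(x)\|_p^p\in L^1$ and $K\in L^1$. This requires
\[
s\Big(\frac{2-k}{p(2+\theta)}+\frac{k+\theta}{2+\theta}\Big)=1 .
\]
Computing the critical exponent:
\[
s_k=\frac{p(2+\theta)}{2-k+p(k+\theta)}=\frac{n(p-1)+2p}{n(p-1)+2+k(p-1)} .
\]
For $k=0$ this is $\frac{n(p-1)+2p}{n(p-1)+2}$, and for $k=1$ it is $\frac{n(p-1)+2p}{(n+1)(p-1)+2}$.

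This reproduces the $p_1$ range exactly. For $k=1$, the lemma's stated bound $\frac{n(p-1)+2p}{(n-1)(p-1)+2p}$ is smaller than $s_1$, since $(n-1)(p-1)+2p>(n+1)(p-1)+2$ is equivalent to $p-1+2p>2(p-1)+2$, i.e. $p>0$. So the stated $p_2$ range lies inside the attainable one.

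Finally, the whole range of exponents $p_i\ge1$ up to the critical value follows because $\Omega$ is bounded, so $L^{s_k}(\Omega)\subset L^{p_i}(\Omega)$ for every $p_i\le s_k$. The constant $M$ depends only on $\|f\|_{L^p}$, $\|(1+\frac{|\vv|^2}{2})f\|_{L^1}$ and $|\Omega|$, since the H\"older step gives
\[
\Big\|\int|\vv|^k f\,d\vv\Big\|_{L^{s_k}}\le C\|f\|_{L^p}^{a}\,\|K\|_{L^1}^{b}
\]
for suitable $a,b\ge0$.

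The only delicate step is the exponent bookkeeping in the H\"older pairing; the case $p=\infty$ is excluded by hypothesis, so the computation above covers all cases.
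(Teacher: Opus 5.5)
Your proof is correct, but it takes a different route from the paper.

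The paper uses a single weighted H\"older inequality in $\vv$. It writes $f=f^a(1+\frac{|\vv|^2}{2})^a\cdot f^{1-a}(1+\frac{|\vv|^2}{2})^{-a}$ (with an extra factor $|\vv|$ for $j$) and applies a three-factor H\"older inequality. This requires the leftover weight $(1+\frac{|\vv|^2}{2})^{-\frac{ap}{(1-a)(p-1)}}$ to be integrable on $\RR$, which forces the strict inequality $a>\frac{n(p-1)}{n(p-1)+2p}$ (resp.\ $b>\frac{n(p-1)+p}{n(p-1)+2p}$). As $a$ runs over the admissible range, $p_1=\frac{p}{1+a(p-1)}$ sweeps out exactly the half-open interval in the statement, and the final step in $x$ is a two-factor H\"older inequality whose exponents sum to one.

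Your truncation at radius $R$ followed by pointwise optimization is the classical velocity-moment interpolation. What it buys:
\begin{itemize}
\item It never needs a power weight to be integrable at infinity, so it attains the endpoint exponents $s_0$ and $s_1$ themselves. You therefore prove slightly more than stated.
\item It handles $\bar\rho$, $j$, and indeed any moment of order $k<2$ with one formula.
\end{itemize}
The small price is that you reach the lower exponents through $L^{s_k}(\Omega)\subset L^{p_i}(\Omega)$, so your $M$ also depends on $|\Omega|$. In the paper, every $p_i$ in the range is hit exactly by an admissible $a$ or $b$, so no volume factor enters. To match the stated dependence literally, replace the inclusion by interpolation between your $L^{s_k}$ bound and the trivial bound $\|\int_{\RR}|\vv|^k f\,d\vv\|_{L^1(\Omega)}\le\|(1+\frac{|\vv|^2}{2})f\|_{L^1(\Omega\times\RR)}$. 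This holds for $k=0,1$ since $|\vv|\le 1+\frac{|\vv|^2}{2}$.

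One correction: the stated upper bound for $p_2$ is not strictly smaller than your $s_1$; it equals it. Indeed $(n-1)(p-1)+2p=(n+1)(p-1)+2=n(p-1)+p+1$: after cancelling $(n-1)(p-1)$, both sides equal $2p$. The ``$p-1+2p$'' in your reduction is an algebra slip. This common value is also exactly the endpoint produced by the paper's constraint on $b$. The slip does not affect your argument, which only needs the stated range to lie in $[1,s_1]$.
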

	\begin{proof}
		By assumption, $\|(1+\frac{|\vv|^2}{2})f\|_{L^1(\Omega\times\RR)}\leq C$ and $\|f\|_{L^p(\Omega\times\RR)}\leq C$.
		Using the interpolation inequality one has for any $0\leq a\leq1$,
		\begin{align*}
			&\int_{\RR} f d\vv
			=\int_{\RR}f^a(1+\frac{|\vv|^2}{2})^a f^{1-a}(1+\frac{|\vv|^2}{2})^{-a} d\vv\\
			\leq& \left(\int_{\RR}(1+\frac{|\vv|^2}{2})f d\vv\right)^a\left(\int_{\RR}f^p
			d\vv\right)^{\frac{1-a}{p}}\left(\int_{\RR}(1+\frac{|\vv|^2}{2})^\frac{-ap}{(1-a)(p-1)}d\vv\right)^\frac{(1-a)(p-1)}{p}.
		\end{align*}
		If 
		\begin{eqnarray*}
			\frac{2ap}{(1-a)(p-1)}>n,\,\, \text{ i.e. }\,\, a>\frac{n(p-1)}{n(p-1)+2p},
		\end{eqnarray*}
		then
		\begin{eqnarray*}
			\int_\Omega\left(\int_{\RR}fd\vv\right)^{p_1}dx\leq C \left(\int_\Omega\int_{\RR}f(1+\frac{|\vv|^2}{2}) d\vv
			dx\right)^{ap_1}\left(\int_\Omega\int_{\RR}f^p d\vv dx\right)^{1-ap_1}\leq  M,
		\end{eqnarray*}
		provided 
		\begin{eqnarray}
			\frac{(1-a)p_1}{p}=1-ap_1,\,\, \text{ i.e. }\,\, p_1=\frac{p}{1+a(p-1)},\,\, \text{ and then }\, 1\leq p_1<\frac{n(p-1)+2p}{n(p-1)+2}.
		\end{eqnarray}
		Similarly, for $\int_{\RR}\vv fd\vv$, one has for any $0\leq b\leq1$,
		\begin{align*}
			&\left|\int_{\RR}\vv fd\vv\right|
			\leq\int_{\RR}f^b(1+\frac{|\vv|^2}{2})^b f^{1-b}|\vv| (1+\frac{|\vv|^2}{2})^{-b}d\vv\\
			\leq& \left(\int_{\RR}(1+\frac{|\vv|^2}{2}) fd\vv\right)^b\left(\int_{\RR}f^p
			d\vv\right)^\frac{1-b}{p}\left(\int_{\RR}\left(|\vv|(1+\frac{|\vv|^2}{2})^{-b}\right)^\frac{p}{(1-b)(p-1)}d\vv\right)^\frac{(1-b)(p-1)}{p},
		\end{align*}
		then
		\begin{eqnarray*}
			\int_{\Omega}\left|\int_{\RR}\vv fd\vv\right|^{p_2}dx\leq C \left(\int_\Omega\int_{\RR}f(1+\frac{|\vv|^2}{2}) d\vv
			dx\right)^{bp_2}\left(\int_\Omega\int_{\RR}f^p d\vv dx\right)^{1-bp_2}\leq  M,
		\end{eqnarray*}
		provided 
		\begin{align}
			&\frac{(2b-1)p}{(1-b)(p-1)}>n,\,\, \text{ i.e. }\,\, b>\frac{n(p-1)+p}{n(p-1)+2p},\notag\\
			&\frac{(1-b)p_2}{p}=1-bp_2,\,\, i.e.\,\, p_2=\frac{p}{1+b(p-1)}\,\,\text{ and then }\,\, 1\leq p_2<\frac{n(p-1)+2p}{(n-1)(p-1)+2p}.
		\end{align}
	\end{proof}

Next, we show that the truncated macroscopic momentum flux inherits the Besov regularity of $f$,
which is crucial for the boundary treatment below.
\begin{lemma}\label{le-MR}
	Let $f\in B_p^{\alpha,\infty}(\Omega\times\RR)$ with $\alpha\in(0,1)$ and $p\in[1,\infty]$.
	For any $R>0$, define the truncated macroscopic momentum flux
	\[
	M_R(x)=\int_{|\vv|\leq 2R}\vv\Big(1+\frac{|\vv|^2}{2}\Big)f(x,\vv)\,d\vv.
	\]
	Then $M_R\in B_p^{\alpha,\infty}(\Omega)$, and there exists a constant $C>0$, depending only on $R$, $n$ and $p$, such that
	\[
	\|M_R\|_{B_p^{\alpha,\infty}(\Omega)}\leq C\|f\|_{B_p^{\alpha,\infty}(\Omega\times\RR)}.
	\]
\end{lemma}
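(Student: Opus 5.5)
The argument is a direct Hölder-in-velocity estimate, using that the weight is bounded on the truncated velocity set. Set $w_R(\vv)=\vv\big(1+\frac{|\vv|^2}{2}\big)\mathbf 1_{\{|\vv|\le 2R\}}$, so that $|w_R(\vv)|\le 2R(1+2R^2)=:c_R$ and $M_R(x)=\int_{\RR}w_R(\vv)f(x,\vv)\,d\vv$. The whole proof is to exploit that $w_R$ is bounded with support of finite measure $|B_{2R}|$, and that it depends only on $\vv$. Hence spatial increments of $M_R$ are velocity averages of spatial increments of $f$. We need to control both the $L^p$ part and the difference-quotient part of $\|M_R\|_{B_p^{\alpha,\infty}(\Omega)}$.

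\textbf{$L^p$ part.} For $1\le p<\infty$, Hölder's inequality in $\vv$ on $B_{2R}$ gives
\[
|M_R(x)|\le c_R\,|B_{2R}|^{1-\frac1p}\Big(\int_{\RR}|f(x,\vv)|^p\,d\vv\Big)^{\frac1p}.
\]
Raising to the power $p$ and integrating over $\Omega$ yields $\|M_R\|_{L^p(\Omega)}\le C(R,n,p)\|f\|_{L^p(\Omega\times\RR)}$. For $p=\infty$ the bound $|M_R(x)|\le c_R|B_{2R}|\,\|f\|_{L^\infty}$ is immediate.

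\textbf{Seminorm part.} Fix $z\in\RR$ with $0<|z|\le1$, and let $x\in\Omega_z=\{x\in\Omega:x+z\in\Omega\}$. Since $w_R$ does not depend on $x$,
\[
M_R(x+z)-M_R(x)=\int_{\RR}w_R(\vv)\big(f(x+z,\vv)-f(x,\vv)\big)\,d\vv .
\]
Applying the same Hölder inequality and integrating over $\Omega_z$ gives
\[
\Big(\int_{\Omega_z}|M_R(x+z)-M_R(x)|^p\,dx\Big)^{\frac1p}\le C(R,n,p)\Big(\int_{\Omega_z}\int_{\RR}|f(x+z,\vv)-f(x,\vv)|^p\,d\vv\,dx\Big)^{\frac1p}.
\]
The right-hand side is exactly the increment of $f$ along the phase-space shift $(z,\mathbf 0)$. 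This shift has length $|z|\le1$, and its admissible set $(\Omega\times\RR)_{(z,\mathbf 0)}=\Omega_z\times\RR$. Therefore the right-hand side is at most $C|z|^{\alpha}\|f\|_{B_p^{\alpha,\infty}(\Omega\times\RR)}$. Multiplying by $|z|^{-\alpha}$ and taking the supremum over $0<|z|\le1$ bounds the seminorm of $M_R$.

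Adding the two parts gives the claim. The only point needing care is this identification of the domains: a purely spatial shift in phase space is one of the shifts allowed in the definition of $B_p^{\alpha,\infty}(\Omega\times\RR)$, with the same length and the same admissible set $\Omega_z\times\RR$, so no extension of $f$ is required. The case $p=\infty$ is identical with essential suprema replacing the integrals.

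The constant depends only on $R$, $n$, $p$ through $c_R$ and $|B_{2R}|^{1-1/p}$. The truncation $|\vv|\le 2R$ is essential: without it the cubic weight is unbounded, and $f\in L^p$ would not control the velocity integral.
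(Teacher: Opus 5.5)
Your proposal is correct and follows essentially the same route as the paper. Both write the spatial increment of $M_R$ as a velocity average, against the bounded kernel supported in $B_{2R}$, of the spatial increment of $f$. Both then apply H\"older in $\vv$ and invoke the Besov seminorm of $f$ along the purely spatial shift $(z,\mathbf{0})$. The only difference is cosmetic: you apply H\"older pointwise in $x$ before integrating, whereas the paper first uses Minkowski's integral inequality and then H\"older in $\vv$.
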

\begin{proof}
	Write $K_R(\vv)=\vv\big(1+\frac{|\vv|^2}{2}\big)\mathbf{1}_{|\vv|\leq2R}$;
	then $|K_R(\vv)|\leq C(R)$, $\supp K_R\subset B_{2R}$ and $M_R(x)=\int_{\RR}K_R(\vv)f(x,\vv)\,d\vv$.
	For $h\in\R^n$ with $h\neq0$ and $|h|$ small,
	denote the difference in the $x$-direction by $\Delta^{(x)}_h f(x,\vv)=f(x+h,\vv)-f(x,\vv)$; then
	\[
	\Delta^{(x)}_h M_R(x)=\int_{\RR}K_R(\vv)\,\Delta^{(x)}_h f(x,\vv)\,d\vv.
	\]
	By Minkowski's integral inequality and $|K_R(\vv)|\leq C(R)$,
	\[
	\|\Delta^{(x)}_h M_R\|_{L^p(\Omega_h)}\leq C(R)\int_{|\vv|\leq2R}\|\Delta^{(x)}_h f(\cdot,\vv)\|_{L^p(\Omega_h)}\,d\vv,
	\]
	where $\Omega_h=\{x\in\Omega:\ x+h\in\Omega\}$. Applying H\"older's inequality in the $\vv$ variable,
	\[
	\int_{|\vv|\leq2R}\|\Delta^{(x)}_h f(\cdot,\vv)\|_{L^p(\Omega_h)}\,d\vv
	\leq C(R)\Big(\int_{\RR}\|\Delta^{(x)}_h f\|_{L^p(\Omega_h)}^p\,d\vv\Big)^{\frac1p}
	= C(R)\|\Delta^{(x)}_h f\|_{L^p(\Omega_h\times\RR)}.
	\]
	Since $f\in B_p^{\alpha,\infty}(\Omega\times\RR)$, the definition of the Besov seminorm gives
	\[
	\|\Delta^{(x)}_h f\|_{L^p(\Omega_h\times\RR)}\leq |h|^\alpha[f]_{B_p^{\alpha,\infty}(\Omega\times\RR)}.
	\]
	Hence
	\[
	|h|^{-\alpha}\|\Delta^{(x)}_h M_R\|_{L^p(\Omega_h)}\leq C(R)\|f\|_{B_p^{\alpha,\infty}(\Omega\times\RR)}.
	\]
	Since obviously $\|M_R\|_{L^p(\Omega)}\leq C(R)\|f\|_{L^p(\Omega\times\RR)}$,
	we conclude that $M_R\in B_p^{\alpha,\infty}(\Omega)$ together with the desired estimate.
\end{proof}

The following two lemmas provide the trace theory in Besov spaces needed for the boundary terms.
\begin{lemma}\label{le-trace}
	Let $\Omega\subset\R^n$ be a bounded domain with $C^1$ boundary, $0<\alpha<1$ and $1\leq p<\infty$.
	If $\alpha p>1$, then there exists a bounded linear trace operator
	\[
	\operatorname{tr}:\ B_p^{\alpha,\infty}(\Omega)\longrightarrow L^p(\partial\Omega),
	\]
	such that $\operatorname{tr}(u)=u|_{\partial\Omega}$ for any $u\in C(\overline{\Omega})\cap B_p^{\alpha,\infty}(\Omega)$.
	Moreover, there exists a constant $C=C(\Omega,\alpha,p)>0$ such that
	\[
	\|\operatorname{tr}(u)\|_{L^p(\partial\Omega)}\leq C\|u\|_{B_p^{\alpha,\infty}(\Omega)}.
	\]
\end{lemma}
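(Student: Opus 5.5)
The plan is to reduce to the half-space and prove a one-dimensional trace estimate there, using the dyadic structure of the $B_p^{\alpha,\infty}$ modulus.

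\emph{Localization.} Since $\partial\Omega$ is compact and $C^1$, I would cover it by finitely many charts $U_1,\dots,U_N$. On each $U_k$ a $C^1$ diffeomorphism $\Phi_k$ flattens the boundary to $\{x_n=0\}$, and I take a subordinate partition of unity $\{\psi_k\}$. Multiplication by a smooth cutoff preserves $B_p^{\alpha,\infty}$, because
\[
\Delta_z(\psi u)=\psi(\cdot+z)\Delta_z u+u\,\Delta_z\psi \quad\text{and}\quad |\Delta_z\psi|\le C|z|\le C|z|^\alpha.
\]
Composition with a bi-Lipschitz map also preserves it, by a change of variables in the difference quotient (up to constants, since $\Phi_k$ distorts increments by a bounded factor). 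It therefore suffices to prove the following: for $u\in B_p^{\alpha,\infty}(\R^{n-1}\times(0,1))$ with support in a fixed compact set in $x'$, the limit $\lim_{s\to0^+}u(\cdot,s)$ exists in $L^p(\R^{n-1})$ and has norm at most $C\|u\|_{B_p^{\alpha,\infty}}$.

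\emph{Core estimate.} Consider first $u$ continuous up to the boundary. I would mollify only in the tangential variable to reduce to well-defined slices, or equivalently use averages. For $0<s\le1$ define
\[
A_s(x')=\frac1s\int_s^{2s}u(x',t)\,dt.
\]
By Jensen's inequality, $\|A_s-A_{s/2}\|_{L^p}$ is bounded by an average over $\tau\in(s/2,s)$ of $\|u(\cdot,t+\tau)-u(\cdot,t)\|_{L^p}$. Using the normal increment $z=\tau e_n$, this gives
\[
\|A_s-A_{s/2}\|_{L^p(\R^{n-1})}\le C s^{\alpha-1/p}[u]_{B_p^{\alpha,\infty}}.
\]
The factor $s^{-1/p}$ comes from converting an $L^p$ norm over a slab of thickness $s$ into a single slice average.

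Summing over dyadic $s=2^{-j}$, the series $\sum_j 2^{-j(\alpha-1/p)}$ converges precisely because $\alpha p>1$. Hence $A_{2^{-j}}$ is Cauchy in $L^p$. Its limit equals $u|_{\partial}$ when $u$ is continuous. The bound $\|A_1\|_{L^p}\le C\|u\|_{L^p}$ then gives
\[
\|\mathrm{tr}\,u\|_{L^p}\le C\|u\|_{B_p^{\alpha,\infty}}.
\]
For general $u$ I would define $\mathrm{tr}\,u:=\lim_j A_{2^{-j}}$ directly. This avoids any density argument, which would be problematic since smooth functions are not dense in $B_p^{\alpha,\infty}$. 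Linearity is immediate, and consistency with restriction for $u\in C(\overline\Omega)$ holds by continuity.

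\emph{Main obstacle.} The delicate step is the gluing after flattening. The averages must be taken along the transformed normal direction, and the increments $\tau e_n$ must stay inside $\Omega_z$. Moreover, the $C^1$ chart only gives Besov invariance up to constants, and not at the level of the localized dyadic moduli. I would handle this by working with the global Besov seminorm, which is stable under bi-Lipschitz maps for $\alpha<1$. Finally, I would check that the chart-wise traces agree on overlaps, which again follows from continuity for continuous $u$ and from the definition of the trace as a limit of averages in general.
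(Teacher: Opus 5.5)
Your argument is essentially correct, but it takes a different route from the paper. The paper does not really prove the lemma: it cites Triebel's trace theorem $B^{\alpha,q}_p(\R^n)\to B^{\alpha-1/p,q}_p(\R^{n-1})$, notes that for $\alpha p>1$ the target embeds into $L^p$, and localizes by straightening the boundary with a partition of unity. You instead give a self-contained proof. Writing $A_s-A_{s/2}$ as a double average over $(s,2s)\times(s/2,s)$ and applying Jensen gives $\|A_s-A_{s/2}\|_{L^p}\lesssim s^{\alpha-1/p}[u]_{B^{\alpha,\infty}_p}$; the increments then range over $(0,3s/2)$, which is harmless. The hypothesis $\alpha p>1$ enters exactly as the convergence of $\sum_j2^{-j(\alpha-1/p)}$.

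Your route buys several things. You work directly with the intrinsic difference norm on $\Omega$ that the paper uses, whereas the citation tacitly identifies that space with restrictions of $B^{\alpha,\infty}_p(\R^n)$. You define the trace as a limit, so no density argument in $B^{\alpha,\infty}_p$ is needed. You also obtain precisely the $L^p$ convergence of boundary-layer averages to the trace that Lemma \ref{le-layer} relies on, which the paper only asserts ``as part of the trace construction''. The citation buys brevity and the sharper target $B^{\alpha-1/p,\infty}_p(\partial\Omega)$.

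One step in your sketch needs a different justification. Composition with a $C^1$ diffeomorphism does not preserve $B^{\alpha,\infty}_p$ simply ``by a change of variables in the difference quotient''. After substituting $y=\Phi(x)$, the difference $u(\Phi(x+z))-u(\Phi(x))$ becomes $u(y+w(y))-u(y)$, where the increment $w(y)$ depends on the point. The supremum over fixed increments $z$ does not control this directly. The invariance is true for $0<\alpha<1$, but proving it requires an averaged-increment (intermediate point) modulus, and on a domain the intermediate points must stay in $\Omega$.

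You can bypass this altogether. In each boundary chart write $\Omega\cap U_k=\{x_n>\gamma_k(x')\}$ and average vertically rather than along the normal: $A_s(x')=\frac1s\int_s^{2s}(\psi_ku)(x',\gamma_k(x')+t)\,dt$. The increments are then the fixed vectors $\tau e_n$, and the shear $(x',t)\mapsto(x',\gamma_k(x')+t)$ has unit Jacobian. So your slab estimate applies verbatim with $\|\Delta_{\tau e_n}(\psi_ku)\|_{L^p}$, and $L^p(dx')$ on the graph is equivalent to $L^p(d\sigma)$.

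Finally, setting $\operatorname{tr}u:=\sum_k\operatorname{tr}_k(\psi_ku)$ already yields a bounded linear operator that coincides with restriction on $C(\overline{\Omega})\cap B^{\alpha,\infty}_p(\Omega)$. So you never need the chart-wise traces of a general $u$ to agree on overlaps.
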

\begin{proof}
	The existence and boundedness of the trace operator $\operatorname{tr}$ is a standard result in the theory of Besov spaces.
	More precisely,
	Theorem 4.7.1 of \cite{Triebel1978} establishes a bounded linear trace operator from the Besov space $B^{\alpha,q}_p(\R^n)$ onto
	$B^{\alpha-\frac1p,q}_p(\R^{n-1})$.
	When $\alpha p>1$, this operator restricts to $B_p^{\alpha,\infty}(\Omega)$ with values in $L^p(\partial\Omega)$.
	For bounded domains with $C^1$ boundary, the claim then follows by locally straightening the boundary and using a partition of unity (cf.
	the discussion in \cite[Section 4.7]{Triebel1978}).
\end{proof}

\begin{lemma}\label{le-layer}
	Let $\Omega\subset\R^n$ be a bounded domain with $C^2$ boundary, $0<\alpha<1$, $1\leq p<\infty$ and $\alpha p>1$.
	Let $\kappa\in C^\infty(\R)$ satisfy $\supp\kappa'\subset[1,2]$ and $\int_1^2\kappa'(\tau)\,d\tau=1$.
	Then for any $u\in B_p^{\alpha,\infty}(\Omega)$, as $h\to0^+$,
	\begin{equation}\label{eq-layer}
		\frac1h\int_{h\leq d(x,\partial\Omega)\leq2h}u(x)\,\kappa'\Big(\frac{d(x,\partial\Omega)}{h}\Big)\,dx\ \longrightarrow\
		\int_{\partial\Omega}\operatorname{tr}(u)(x)\,d\sigma_x,
	\end{equation}
	where $\operatorname{tr}(u)$ is the trace defined in Lemma \ref{le-trace} and $d(x,\partial\Omega)$ is the distance function to the
	boundary.
\end{lemma}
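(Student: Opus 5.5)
We prove \eqref{eq-layer} by first establishing it for smooth functions and then extending it to all of $B_p^{\alpha,\infty}(\Omega)$ through a density and uniform-boundedness argument.

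\emph{The functional.} For $u\in B_p^{\alpha,\infty}(\Omega)$ and $0<h<h_0/2$ set
\[
L_h(u)=\frac1h\int_{h\le d\le 2h}u\,\kappa'(d/h)\,dx .
\]
In the strip $\Omega_{h_0}$ we use the normal coordinates $x=\sigma-s\nu(\sigma)$, with $\sigma\in\partial\Omega$ and $s=d(x)\in(0,h_0)$. Since $\partial\Omega$ is $C^2$, the Jacobian satisfies $dx=J(\sigma,s)\,d\sigma\,ds$ with $J$ continuous, $J(\sigma,0)=1$ and $|J-1|\le Cs$. Substituting $s=h\tau$ gives
\[
L_h(u)=\int_1^2\kappa'(\tau)\int_{\partial\Omega}u(\sigma-h\tau\nu(\sigma))\,J(\sigma,h\tau)\,d\sigma\,d\tau .
\]

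\emph{Smooth functions.} If $u\in C(\overline\Omega)$, then $u(\sigma-h\tau\nu)J\to u(\sigma)$ uniformly on $\partial\Omega\times[1,2]$. Since $\int_1^2\kappa'=1$, we get $L_h(u)\to\int_{\partial\Omega}u\,d\sigma=\int_{\partial\Omega}\operatorname{tr}(u)\,d\sigma$.

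\emph{Uniform bound.} The main step is to show that
\[
|L_h(u)-\textstyle\int_{\partial\Omega}\operatorname{tr}(u)\,d\sigma|\le C\|u\|_{B_p^{\alpha,\infty}(\Omega)}
\]
with $C$ independent of $h$. Since $J\le C$, we have
\[
|L_h(u)|\le C\sup_{s\in[h,2h]}\|u(\cdot-s\nu)\|_{L^1(\partial\Omega)}\le C\sup_{s\in[h,2h]}\|u(\cdot-s\nu)\|_{L^p(\partial\Omega)} .
\]
Hence it suffices to bound the $L^p$ traces on the parallel surfaces $\Sigma_s=\{d=s\}$ uniformly in $s$. For this we apply Lemma~\ref{le-trace} on the subdomain $\Omega^s=\{d>s\}$. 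For $s$ small, $\Omega^s$ is $C^2$-diffeomorphic to $\Omega$ via a map that is close to the identity uniformly in $s$. The Besov norm of $u|_{\Omega^s}$ is at most $\|u\|_{B_p^{\alpha,\infty}(\Omega)}$, because restriction shrinks both the domain of integration and the sets $\Sigma_z$. The trace constant for $\Omega^s$ can be taken uniform in $s$, since the extension and straightening constants in the proof of Lemma~\ref{le-trace} depend only on $C^2$ bounds of the boundary, and these are uniform for the parallel surfaces. Moreover, for continuous $u$ the trace on $\Sigma_s$ is the pointwise restriction.

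So for $u\in C(\overline\Omega)\cap B_p^{\alpha,\infty}$ we obtain $|L_h(u)|\le C\|u\|_{B_p^{\alpha,\infty}(\Omega)}$, together with the trace bound from Lemma~\ref{le-trace}. For general $u$, we check that $L_h(u)$ is approximated by $L_h(u_k)$ with $u_k$ smooth, which holds because $L_h$ is $L^1$-continuous for fixed $h$.

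\emph{Density and its obstacle.} We now want to conclude by an $\varepsilon/3$ argument: choose $u_k$ smooth with $\|u-u_k\|$ small in a norm controlling both $L_h$ and $\operatorname{tr}$. The main obstacle is that smooth functions are not dense in $B_p^{\alpha,\infty}$. We resolve it by a slight loss of regularity. Since $\alpha p>1$, pick $\alpha'\in(1/p,\alpha)$. Then $B_p^{\alpha,\infty}(\Omega)\hookrightarrow B_p^{\alpha',p}(\Omega)$, smooth functions are dense in $B_p^{\alpha',p}(\Omega)$, and the trace operator of Lemma~\ref{le-trace} (Triebel's Theorem~4.7.1) is bounded on $B_p^{\alpha',p}$. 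Carrying out the uniform bound of the previous step in $B_p^{\alpha',p}$ instead (same proof) gives
\[
|L_h(u)-L_h(u_k)|+\Big|\int_{\partial\Omega}\operatorname{tr}(u-u_k)\,d\sigma\Big|\le C\|u-u_k\|_{B_p^{\alpha',p}}
\]
uniformly in $h$. Combining this with the convergence for each smooth $u_k$ yields \eqref{eq-layer}.

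Alternatively, one can bypass the density step by estimating directly in normal coordinates:
\[
\|u(\cdot-s\nu)-u(\cdot-s'\nu)\|_{L^p(\partial\Omega)}\lesssim |s-s'|^{\alpha-1/p}
\]
in an averaged sense. This shows that $s\mapsto u(\cdot-s\nu)$ is Cauchy in $L^p(\partial\Omega)$ as $s\to0$, and that its limit is $\operatorname{tr}(u)$.
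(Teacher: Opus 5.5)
Your proof is correct, but its main line differs from the paper's. Both start from the same normal-coordinate rewriting of the left side as $\int_1^2\kappa'(\tau)\int_{\partial\Omega}u(\sigma-h\tau\nu(\sigma))\,(1+O(h\tau))\,d\sigma\,d\tau$.

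\textbf{The paper's route.} The paper then concludes at once. It invokes, as part of Triebel's trace construction, the claim that $u(\cdot-s\nu)\to\operatorname{tr}(u)$ in $L^p(\partial\Omega)$ with a rate controlled by $\|u\|_{B_p^{\alpha,\infty}(\Omega)}$, uniformly for $s=h\tau$, $\tau\in[1,2]$. This is essentially the quantitative alternative in your last paragraph. After extension and flattening, a Littlewood--Paley splitting at frequency $1/s$ gives $\|u(\cdot-s\nu)-\operatorname{tr}(u)\|_{L^p(\partial\Omega)}\lesssim s^{\alpha-1/p}\|u\|_{B_p^{\alpha,\infty}}$.

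\textbf{Your route.} Your main argument is qualitative. It has three pieces: convergence for continuous $u$, an $h$-uniform bound on $L_h$ from trace estimates on the parallel domains $\{d>s\}$, and an $\varepsilon/3$ argument in $B_p^{\alpha',p}(\Omega)$ with $1/p<\alpha'<\alpha$, where smooth functions are dense.

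\textbf{What each buys.} You lose the rate $h^{\alpha-1/p}$ that the paper's claim would give. In exchange, you use only boundedness of the trace (which is what Triebel's theorem actually states) plus density. You also deal explicitly with the non-density of smooth functions in $B_p^{\alpha,\infty}$, which the paper sidesteps only through the asserted norm-controlled rate. Finally, you implicitly fix the trace as the restriction of the $B_p^{\alpha',p}$ trace. That is worthwhile: agreement with pointwise restriction on $C(\overline\Omega)\cap B_p^{\alpha,\infty}(\Omega)$, as in Lemma~\ref{le-trace}, does not by itself single out an operator on a space in which that subspace is not dense.

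\textbf{One step to spell out.} The $s$-uniformity of the trace constants on $\{d>s\}$ deserves an extra line. It is cleanest to flatten with the normal-coordinate map $(\sigma,s)\mapsto\sigma-s\nu(\sigma)$. For a $C^2$ boundary this is a bi-Lipschitz $C^1$ diffeomorphism of the collar. That suffices, because $B_p^{\alpha',p}=W^{\alpha',p}$ carries the Gagliardo double-integral norm, which is invariant up to constants under bi-Lipschitz changes of variables. After flattening, the parallel surfaces are hyperplanes and the uniformity is just translation invariance.
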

\begin{proof}
	By the discussion on the distance function preceding Lemma \ref{le-theta}, there exists $h_0=h_0(\Omega)>0$ such that, for $0<h<h_0$,
	the distance function $d(x)\equiv d(x,\partial\Omega)$ is $C^2$ on $\Omega_h=\{x\in\Omega:\ d(x)<h\}$ and satisfies $\nabla
	d(x)=-\nu(\sigma(x))$,
	where $\nu$ is the unit outward normal to $\partial\Omega$ and $\sigma(x)$ is the nearest-point projection of $x$ onto $\partial\Omega$.
	Moreover, for $\tau\in[1,2]$,
	the level set $\{x\in\Omega:\ d(x)=h\tau\}$ is a $C^1$-diffeomorphic image of $\partial\Omega$ whose surface element satisfies
	\begin{equation}\label{eq-surface}
		d\sigma_x=\big(1+O(h\tau)\big)\,d\sigma_y,\qquad x=y-h\tau\,\nu(y),
	\end{equation}
	where $d\sigma_y$ is the surface element on $\partial\Omega$.
	
	Using the change of variables $t=d(x)$, the integral on the left-hand side of \eqref{eq-layer} can be rewritten as
	\[
	\frac1h\int_{h\leq d\leq2h}u(x)\,\kappa'\Big(\frac{d}{h}\Big)\,dx
	=\int_1^2\Big(\int_{\{d=h\tau\}}u(x)\,d\sigma_x\Big)\kappa'(\tau)\,d\tau.
	\]
	By \eqref{eq-surface} and the fact that $u\in B_p^{\alpha,\infty}(\Omega)$ with $\alpha p>1$,
	Lemma \ref{le-trace} implies that $u$ admits a well-defined $L^p$ trace $\operatorname{tr}(u)$ on $\partial\Omega$, and
	\[
	\int_{\{d=h\tau\}}u(x)\,d\sigma_x
	=\int_{\partial\Omega}u(y-h\tau\,\nu(y))\,\big(1+O(h\tau)\big)\,d\sigma_y
	\ \longrightarrow\ \int_{\partial\Omega}\operatorname{tr}(u)(y)\,d\sigma_y
	\]
	as $h\to0^+$, uniformly with respect to $\tau\in[1,2]$.
	Here the convergence $u(\,\cdot\,-h\tau\nu)\to\operatorname{tr}(u)$ holds in $L^p(\partial\Omega)$ as part of the trace construction
	for Besov spaces with $\alpha p>1$ (cf. \cite[Section 4.7]{Triebel1978}),
	and the rate depends only on $\|u\|_{B_p^{\alpha,\infty}(\Omega)}$, which gives the uniformity in $\tau\in[1,2]$. Therefore
	\[
	\int_1^2\Big(\int_{\{d=h\tau\}}u(x)\,d\sigma_x\Big)\kappa'(\tau)\,d\tau
	\ \longrightarrow\ \int_{\partial\Omega}\operatorname{tr}(u)(y)\,d\sigma_y\int_1^2\kappa'(\tau)\,d\tau
	=\int_{\partial\Omega}\operatorname{tr}(u)\,d\sigma_y,
	\]
	which completes the proof.
\end{proof}

\section{Proof of Theorem \ref{renormalization}}\label{se-pro-th1}
To derive the renormalization property, we shall establish the following commutator estimates.

\begin{lemma}\label{le-error}
	Let the pair $(f,\uu)$ satisfy the regularity hypotheses in Theorem \ref{renormalization}. Then we get
	\begin{align}
		&\|\nabla_x(\vv f)^{\epsilon,\delta,\eta}-\vv^\delta\cdot\nabla_x f^{\epsilon,\delta,\eta}\|_{L^1(0,T;L^p(\Omega\times\RR))}\notag\\
		&\quad\leq C\big(\epsilon^{\alpha-1}\delta\wedge\epsilon^{-1}\delta^{\alpha+1}\big)\W_f(\epsilon,\delta),\label{eq-comm-1}\\
		&\|\nabla_\vv\cdot(f\uu)^{\epsilon,\delta,\eta}\notag-\nabla_\vv f^{\epsilon,\delta,\eta}\cdot
		\uu^{\epsilon,\eta}\|_{L^r(0,T;L^r(\Omega;L^p(\RR)))}\notag\\
		&\quad\leq C\epsilon^{\beta}\big(\epsilon^{\alpha}\delta^{-1}\wedge\delta^{\alpha-1}\big)
		\|f\|_{L^p(0,T;B_p^{\alpha,\infty}(\Omega\times\RR))}\notag\\
		&\qquad\times\|\uu\|_{B_q^{\beta,\infty}([0,T]\times\Omega)},\label{eq-comm-2}\\
		&\|\nabla_\vv\cdot (f\vv)^{\epsilon,\delta,\eta}-\nabla_\vv \cdot(f^{\epsilon,\delta,\eta}
		\vv^\delta)\|_{L^1(0,T;L^p(\Omega\times\RR))}\notag\\
		&\quad\leq C\delta^\alpha \W_f(\epsilon,\delta).\label{eq-comm-3}
	\end{align}
	where the constants $C$ in \eqref{eq-comm-1} and \eqref{eq-comm-3} depend on
	$\|f\|_{L^1(0,T;B_p^{\alpha,\infty}(\Omega\times\RR))}$, while the constant in \eqref{eq-comm-2} depends on
	$\|f\|_{L^p(0,T;B_p^{\alpha,\infty}(\Omega\times\RR))}$ and $\|\uu\|_{B_q^{\beta,\infty}([0,T]\times\Omega)}$.
	The parameter $r$ satisfies $\frac{1}{p}+\frac{1}{q}=\frac{1}{r}<1$.
	In particular, taking $\delta=\epsilon$ the right-hand sides are of order
	$\epsilon^{\alpha}\W_f$, $\epsilon^{\alpha+\beta-1}\|f\|_{L^p(0,T;B_p^{\alpha,\infty}(\Omega\times\RR))}
	\|\uu\|_{B_q^{\beta,\infty}([0,T]\times\Omega)}$ and $\epsilon^{\alpha}\W_f$,
	respectively, and \eqref{eq-comm-2} tends to zero provided $\alpha+\beta>1$.
\end{lemma}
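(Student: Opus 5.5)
Each of the three commutators will be written as an explicit integral against the mollification kernel. Then Minkowski's inequality, the kernel bounds $\|m_\delta\|_{L^1}=1$, $\|\nabla m_\epsilon\|_{L^1}\le C\epsilon^{-1}$, $\|\nabla m_\delta\|_{L^1}\le C\delta^{-1}$, and the scale-localized moduli $\X_f,\V_f$ are applied. Wherever two routes are available, the minimum will be taken exactly as in Lemma \ref{le-diff}. Throughout, the time mollification $m_\eta$ is harmless: it is a contraction in every $L^s(0,T)$ norm and commutes with the operations in $x$ and $\vv$.

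For \eqref{eq-comm-1}, the identity
\[
\nabla_x(\vv f)^{\epsilon,\delta,\eta}(x,\vv)-\vv^\delta\cdot\nabla_x f^{\epsilon,\delta,\eta}
=\int m_\delta(\mathbf{z})\,(\vv-\mathbf{z}-\vv^\delta)\,\nabla_x f^{\epsilon,\eta}(x,\vv-\mathbf{z})\,d\mathbf{z}
\]
will be used, with $\vv^\delta=\vv$ for the radial kernel. Since $\int m_\delta(\mathbf{z})\mathbf{z}\,d\mathbf{z}=0$, the constant term $\nabla_x f^{\epsilon,\eta}(x,\vv)$ can be subtracted inside the integral. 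The factor $|\mathbf{z}|\le2\delta$ is then multiplied by the difference $\nabla_x f^\epsilon(\cdot,\vv-\mathbf{z})-\nabla_x f^\epsilon(\cdot,\vv)$, which the first estimate of Lemma \ref{le-diff} bounds by $(\epsilon^{\alpha-1}\X_f(\epsilon))\wedge(\epsilon^{-1}\delta^\alpha\V_f(\delta))$, since $|\mathbf{z}|\sim\delta$ on the annulus. Integrating in time turns $\X_f+\V_f$ into $\W_f$, giving $(\epsilon^{\alpha-1}\delta\wedge\epsilon^{-1}\delta^{\alpha+1})\W_f$.

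Estimate \eqref{eq-comm-3} is similar but uses no $x$-derivative. The term $\nabla_\vv\cdot(f\vv)^{\epsilon,\delta,\eta}$ equals $nf^{\epsilon,\delta,\eta}+\int\nabla m_\delta(\mathbf{z})\cdot(\vv-\mathbf{z})f^{\epsilon,\eta}(\vv-\mathbf{z})\,d\mathbf{z}$. After expanding $\nabla_\vv\cdot(f^{\epsilon,\delta,\eta}\vv)$ as well, the remainder is $\int \nabla m_\delta(\mathbf{z})\cdot(-\mathbf{z})\,[f^{\epsilon,\eta}(\vv-\mathbf{z})-f^{\epsilon,\eta}(\vv)]\,d\mathbf{z}$, once more using $\int\nabla m_\delta\cdot\mathbf{z}=-n$. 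Since $|\mathbf{z}\,\nabla m_\delta|$ has $L^1$ norm $O(1)$ and the velocity increment at scale $\delta$ is $\le\delta^\alpha\V_f(\delta)$ (spatial mollification does not increase this), the bound $C\delta^\alpha\W_f$ follows.

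For \eqref{eq-comm-2}, note that $\uu$ does not depend on $\vv$. The commutator is therefore $\nabla_\vv$ applied to an $x$-$t$ commutator,
\[
\int\!\!\int m_\epsilon(y)m_\eta(s)\,\nabla_\vv f^{\delta}(x-y,\vv,t-s)\big(\uu(x-y,t-s)-\uu^{\epsilon,\eta}(x,t)\big)\,dy\,ds .
\]
The $\uu$-increment is controlled in $L^q$ by $C\epsilon^\beta\|\uu\|_{B_q^{\beta,\infty}([0,T]\times\Omega)}$, taking $\eta\lesssim\epsilon$; this joint space–time control is why the space–time Besov norm of $\uu$ is needed. The $\vv$-derivative is then handled by two routes. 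Either it falls on $m_\delta$ and is paired with the velocity increment, giving $\delta^{\alpha-1}$. Or one first moves the $x$-difference onto $f$ (subtracting $\nabla_\vv f^\delta(x,\vv,t)$ and using that $\uu(x-y)-\uu^{\epsilon,\eta}(x)$ integrates to zero against the kernel), which gives $\delta^{-1}\epsilon^\alpha$ from the $x$-increment of $f$. Hölder with $\frac1r=\frac1p+\frac1q$ in $(t,x)$, with $L^p$ kept in $\vv$, yields the norm $L^r(0,T;L^r(\Omega;L^p))$ and the product $\|f\|_{L^p_tB_p^{\alpha,\infty}}\|\uu\|_{B_q^{\beta,\infty}}$.

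I expect this step to be the main obstacle. The difficulty is the rigorous bookkeeping of the second route, where the zero-mean trick is applied to a product involving two mollifiers simultaneously. A second issue is the restriction to $\Omega$ near the boundary, where differences are taken only on $\Omega_z$. I would handle the latter by working on the interior region where $\theta_h\ne0$, or by extending $f$ and $\uu$ (with $\uu$ extended by zero). Finally, setting $\delta=\epsilon$ gives the stated rates $\epsilon^\alpha\W_f$, $\epsilon^{\alpha+\beta-1}$ and $\epsilon^\alpha\W_f$.
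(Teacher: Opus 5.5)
Your treatment of \eqref{eq-comm-1} and \eqref{eq-comm-3} follows the paper's argument (its $A_{11}$ and $A_{31}$). The companion terms $A_{12}$ and $A_{32}$ vanish because $\vv^\delta=\vv$ for the radial kernel, as you note. Your single-integral form of the drag commutator is equivalent to the paper's $\omega_{\epsilon,\eta}$ decomposition.

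There is one algebraic slip in \eqref{eq-comm-3}. With $g=f^{\epsilon,\eta}$ one has $\nabla_\vv\cdot(g\vv)^\delta=\int_{\RR}\nabla m_\delta(\mathbf z)\cdot(\vv-\mathbf z)\,g(\vv-\mathbf z)\,d\mathbf z$, with no extra $n g^\delta$: integrating by parts onto the kernel produces a $-ng^\delta$ that cancels it. The commutator is therefore
\[
-\int_{\RR}\nabla m_\delta(\mathbf z)\cdot\mathbf z\,\big[g(\vv-\mathbf z)-g(\vv)\big]\,d\mathbf z+n\,(g-g^\delta),
\]
not the integral alone. The extra term is bounded in $L^p$ by $C\delta^\alpha\V_f(\delta)$, exactly as in the paper, so your bound stands.

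The genuine gap is your second route for \eqref{eq-comm-2}. Subtracting $\nabla_\vv f^\delta(x,\vv,t)$ inside the $(y,s)$-integral leaves the full space--time increment $\nabla_\vv f^\delta(x-y,\vv,t-s)-\nabla_\vv f^\delta(x,\vv,t)$, and you estimate only its $x$-part. The time part $\nabla_\vv f^\delta(x,\vv,t-s)-\nabla_\vv f^\delta(x,\vv,t)$ is not controlled by anything available. In Theorem \ref{renormalization}, $f$ lies only in $L^p(0,T;B_p^{\alpha,\infty}(\Omega\times\RR))$, with no time regularity, so this step cannot yield $\delta^{-1}\epsilon^\alpha$.

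The paper's proof has the same defect, hidden in the step $\|\nabla_\vv\cdot\omega_{\epsilon,\eta}(\uu,f^\delta)\|\le\|\nabla_\vv\cdot\omega_\epsilon(\uu,f^\delta)\|$ and in the analogous removal of $\eta$ in $A_{22}$. Neither is a valid inequality.

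A correct argument first splits off the time mollification:
\[
(f\uu)^{\epsilon,\delta,\eta}-f^{\epsilon,\delta,\eta}\uu^{\epsilon,\eta}
=\big[(f\uu)^{\epsilon,\delta}-f^{\epsilon,\delta}\uu^{\epsilon}\big]^{\eta}
+\big[(f^{\epsilon,\delta}\uu^{\epsilon})^{\eta}-f^{\epsilon,\delta,\eta}\uu^{\epsilon,\eta}\big].
\]
In the first bracket your zero-mean trick works at fixed $t$, since $\int m_\epsilon(y)\big(\uu(x-y,t)-\uu^\epsilon(x,t)\big)\,dy=0$, and $m_\eta$ is a contraction in $L^r(0,T)$. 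The $\vv$-derivative of the second bracket is bounded by $C\eta^\beta\delta^{\alpha-1}\|f\|_{L^p(0,T;B_p^{\alpha,\infty}(\Omega\times\RR))}\|\uu\|_{B_q^{\beta,\infty}([0,T]\times\Omega)}$. This is absorbed if $\eta$ is small relative to $\epsilon$ and $\delta$ (for instance $\eta\le\epsilon^{1+\alpha/\beta}$), or if $\eta\to0$ is taken first.

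Your first route is rigorous, given your restriction $\eta\lesssim\epsilon$, which is needed even there. It already suffices for every use of the lemma, since at $\delta=\epsilon$ both branches of the minimum equal $\epsilon^{\alpha+\beta-1}$.
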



\begin{proof}
	We define the integral
	\begin{align*}
		\omega_\delta(g,f)(x,\vv,t)&=\int_{\RR}m_\delta(\mathbf{w})(g(\vv-\mathbf{w})-g(\vv))(f(x,\vv-\mathbf{w},t)-f(x,\vv,t))d\mathbf{w}\\
		&=(fg)^\delta-f^\delta g-fg^\delta+fg.
	\end{align*}
	Then
	\begin{eqnarray*}
		\omega_\delta(\vv,f^{\epsilon,\eta})=(f\vv)^{\epsilon,\delta,\eta}-\vv^\delta f^{\epsilon,\eta}-f^{\epsilon,\delta,\eta}\vv+
		f^{\epsilon,\eta}\vv.
	\end{eqnarray*}
	That is
	\begin{eqnarray}\label{omegadelta}
		(f\vv)^{\epsilon,\delta,\eta}-\vv^\delta
		f^{\epsilon,\delta,\eta}=\omega_\delta(\vv,f^{\epsilon,\eta})+(f^{\epsilon,\delta,\eta}-f^{\epsilon,\eta})(\vv-\vv^\delta).
	\end{eqnarray}
	Using triangle inequality and Minkowski inequality, we will establish the estimate
	\begin{align*}
		&\|\nabla_x(\vv f)^{\epsilon,\delta,\eta}-\vv^\delta\cdot\nabla_x f^{\epsilon,\delta,\eta}\|_{L^1(0,T;L^p(\Omega\times\RR))}\\
		&\quad=\|\nabla_x\cdot((\vv f)^{\epsilon,\delta,\eta}-\vv^\delta f^{\epsilon,\delta,\eta})\|_{L^1(0,T;L^p(\Omega\times\RR))}\\
		&\quad\leq \|\nabla_x\cdot\omega_\delta(\vv,f^{\epsilon,\eta})\|_{{L^1(0,T;L^p(\Omega\times\RR))}}+\|\nabla_x\cdot
		(f^{\epsilon,\delta,\eta}-f^{\epsilon,\eta})(\vv^\delta-\vv)\|_{L^1(0,T;L^p(\Omega\times\RR))}\\
		&\quad\leq \|\nabla_x\cdot\omega_\delta(\vv,f^\epsilon)\|_{{L^1(0,T;L^p(\Omega\times\RR))}}+\|\nabla_x\cdot
		(f^{\epsilon,\delta}-f^{\epsilon})(\vv^\delta-\vv)\|_{L^1(0,T;L^p(\Omega\times\RR))}\\
		&=A_{11}+A_{12}.
	\end{align*}
	We have
	\begin{align*}
		A_{11}&=\|\nabla_x\cdot\omega_\delta(\vv,f^\epsilon)\|_{{L^1(0,T;L^p(\Omega\times\RR))}}\\
		&\leq
		\int_{0}^{T}\int_{\RR}m_\delta(\mathbf{w})\,|\mathbf{w}|\,\|\nabla_x f^\epsilon(x,\vv-\mathbf{w},t)-\nabla_x f^\epsilon(x,\vv,t)\|_{L^p(\Omega\times\RR)}d\mathbf{w}dt\\
		&\leq C\int_{0}^{T}\int_{\RR}m_\delta(\mathbf{w})\,|\mathbf{w}|\,
		\big(\epsilon^{\alpha-1}\X_f(\epsilon)\wedge\epsilon^{-1}|\mathbf{w}|^\alpha \V_f(|\mathbf w|)\big)d\mathbf{w}dt\\
		&\leq C\big(\epsilon^{\alpha-1}\delta\wedge\epsilon^{-1}\delta^{\alpha+1}\big)\int_{0}^{T}\big(\X_f(\epsilon)+\V_f(\delta)\big)dt\\
		&\leq C\big(\epsilon^{\alpha-1}\delta\wedge\epsilon^{-1}\delta^{\alpha+1}\big)\W_f(\epsilon,\delta).
	\end{align*}
	Here we used the first inequality in Lemma \ref{le-diff}, and $C$ depends on the smooth function $m$ and $\alpha\in(0,1)$. Similarly, since $|\vv^\delta-\vv|\leq C\delta$ is a constant vector, the second inequality in Lemma \ref{le-diff} gives
	\begin{align*}
		A_{12}&=\|\nabla_x\cdot (f^{\epsilon,\delta}-f^{\epsilon})(\vv^\delta-\vv)\|_{L^1(0,T;L^p(\Omega\times\RR))}\\
		&\leq C\delta\,\|(\nabla_x f^\epsilon)^\delta-\nabla_x f^\epsilon\|_{L^1(0,T;L^p(\Omega\times\RR))}\\
		&\leq C\delta\big(\epsilon^{\alpha-1}\delta\wedge\epsilon^{-1}\delta^{\alpha}\big)\int_{0}^{T}\big(\X_f(\epsilon)+\V_f(\delta)\big)dt\\
		&\leq C\big(\epsilon^{\alpha-1}\delta\wedge\epsilon^{-1}\delta^{\alpha+1}\big)\W_f(\epsilon,\delta).
	\end{align*}
	So we get \eqref{eq-comm-1}. We denote the integral
	\begin{align*}
		\omega_{\epsilon,\eta}(g,f)(x,\vv,t)&=\int_{\R}m_\eta(\tau)\int_{\Omega}m_\epsilon(y)(g(x-y,t-\tau)-g(x,t))(f(x-y,\vv,t-\tau)-f(x,\vv,t))dyd\tau\\
		&=(fg)^{\epsilon,\eta}-f^{\epsilon,\eta}g-fg^{\epsilon,\eta}+fg.
	\end{align*}
	Then we have
	\begin{eqnarray}
		(f
		\uu)^{\epsilon,\delta,\eta}-f^{\epsilon,\delta,\eta}\uu^{\epsilon,\eta}=\omega_{\epsilon,\eta}(\uu,f^\delta)-(f^\delta-f^{\epsilon,\delta,\eta})(\uu-\uu^{\epsilon,\eta}).
	\end{eqnarray}
	It follows that
	\begin{align*}
		\|\nabla_\vv&\cdot(f\uu)^{\epsilon,\delta,\eta}-\nabla_\vv f^{\epsilon,\delta,\eta}\cdot
		\uu^{\epsilon,\eta}\|_{L^r(0,T;L^r(\Omega;L^p(\RR)))}\\
		&\leq\|\nabla_\vv\cdot\omega_{\epsilon,\eta}(\uu,f^\delta)\|_{L^r(0,T;L^r(\Omega;L^p(\RR)))}+\|\nabla_\vv\cdot(f^\delta-f^{\epsilon,\delta,\eta})(\uu-\uu^{\epsilon,\eta})\|_{L^r(0,T;L^r(\Omega;L^p(\RR)))}\\
		&=A_{21}+A_{22}.
	\end{align*}
	Using Lemma \ref{le-diff} and the H\"older inequality for $\alpha,\beta\in(0,1)$, we estimate
	$\|\nabla_\vv(f^\delta(x-y,\vv,t)-f^\delta(x,\vv,t))\|_{L^p(\Omega\times\RR)}$ in two ways: on the one hand it is
	bounded by $2\|\nabla_\vv f^\delta\|_{L^p}\leq C\delta^{\alpha-1}\V_f(\delta)$; on the other hand, since
	$\int_{\RR}\nabla m_\delta=0$, one can write
	\[
	\nabla_\vv f^\delta(x,\vv,t)=\int_{\RR}\nabla_\vv m_\delta(\vv-\mathbf w)\big[f(x,\mathbf w,t)-f(x,\vv,t)\big]d\mathbf w,
	\]
	so that its $x$-difference is bounded by $C|y|^\alpha\delta^{-1}\X_f(|y|)$. Hence
	\begin{align*}
		A_{21}&=\|\nabla_\vv\cdot\omega_{\epsilon,\eta}(\uu,f^\delta)\|_{L^r(0,T;L^r(\Omega;L^p(\RR)))}\\
		&\leq\|\nabla_\vv\cdot\omega_{\epsilon}(\uu,f^\delta)\|_{L^r(0,T;L^r(\Omega;L^p(\RR)))}\\
		&\leq\int_\Omega m_\epsilon(y)\big\|(\uu(x-y,t)-\uu(x,t))\cdot\nabla_\vv\big(f^\delta(x-y,\vv,t)\\
		&\qquad-f^\delta(x,\vv,t)\big)\big\|_{L^r(0,T;L^r(\Omega;L^p(\RR)))}dy\\
		&\leq\int_{\Omega}m_\epsilon(y)\|\uu(x-y,t)-\uu(x,t)\|_{L^q(0,T;L^q(\Omega))}\\
		&\qquad\cdot\|\nabla_\vv(f^\delta(x-y,\vv,t)-f^\delta(x,\vv,t))\|_{L^p(0,T;L^p(\Omega\times\RR))}dy\\
		&\leq C\epsilon^{\beta}\big(\epsilon^{\alpha}\delta^{-1}\|\X_f(\epsilon)\|_{L^p(0,T)}\wedge\delta^{\alpha-1}\|\V_f(\delta)\|_{L^p(0,T)}\big)\|\uu\|_{B_q^{\beta,\infty}([0,T]\times\Omega)}\\
		&\leq C\epsilon^{\beta}\big(\epsilon^{\alpha}\delta^{-1}\wedge\delta^{\alpha-1}\big)\|\uu\|_{B_q^{\beta,\infty}([0,T]\times\Omega)}\\
		&\qquad\times\|f\|_{L^p(0,T;B_p^{\alpha,\infty}(\Omega\times\RR))},
	\end{align*}
	For $A_{22}$, the same two routes give
	$\|\nabla_\vv(f^\delta-f^{\epsilon,\delta})\|_{L^p(\Omega\times\RR)}\leq C\big(\epsilon^{\alpha}\delta^{-1}\X_f(\epsilon)\big)\wedge\big(\delta^{\alpha-1}\V_f(\delta)\big)$, and hence
	\begin{align*}
		A_{22}&=\|\nabla_\vv\cdot(f^\delta-f^{\epsilon,\delta,\eta})(\uu-\uu^{\epsilon,\eta})\|_{L^r(0,T;L^r(\Omega;L^p(\RR)))}\\
		&\leq \|\nabla_\vv\cdot(f^\delta-f^{\epsilon,\delta})(\uu-\uu^{\epsilon})\|_{L^r(0,T;L^r(\Omega;L^p(\RR)))}\\
		&\leq \|\nabla_\vv(f^\delta-f^{\epsilon,\delta})\|_{L^p(0,T;L^p(\Omega\times\RR))}\|\uu-\uu^\epsilon\|_{L^q(0,T;L^q(\Omega))}\\
		&\leq C\epsilon^{\beta}\big(\epsilon^{\alpha}\delta^{-1}\wedge\delta^{\alpha-1}\big)\|\uu\|_{B_q^{\beta,\infty}([0,T]\times\Omega)}\|f\|_{L^p(0,T;B_p^{\alpha,\infty}(\Omega\times\RR))}.
	\end{align*}
	So the proof of \eqref{eq-comm-2} is done. For \eqref{eq-comm-3}, writing $g=f^{\epsilon,\eta}$ for brevity, we decompose
	\begin{align*}
		\nabla_\vv\cdot (f\vv)^{\epsilon,\delta,\eta}-\nabla_\vv \cdot(f^{\epsilon,\delta,\eta} \vv^\delta)
		=\nabla_\vv\cdot\big[(g\vv)^{\delta}-g^{\delta}\vv\big]+\nabla_\vv\cdot\big[g^{\delta}(\vv-\vv^\delta)\big]
		=A_{31}+A_{32}.
	\end{align*}
	Since $\int_{\RR}\nabla m_\delta=0$ and $\vv-\vv^\delta=-\int_{\RR}m_\delta(\mathbf w)\,\mathbf w\,d\mathbf w$ is a constant vector
	satisfying $|\vv-\vv^\delta|\leq C\delta$, integration by parts under the integral sign shows that the derivatives fall only on the
	mollification kernels:
	\begin{align*}
		A_{31}&=\Big\|\int_{\RR}\nabla_\vv m_\delta(\vv-\mathbf w)\cdot(\mathbf w-\vv)\big[g(\mathbf w)-g(\vv)\big]d\mathbf w
		+n\big(g-g^\delta\big)\Big\|_{L^1(0,T;L^p(\Omega\times\RR))}\\
		&\leq C\int_{0}^{T}\int_{\RR}\Big(|\nabla m_\delta(\mathbf w)|\,|\mathbf w|^{\alpha+1}+m_\delta(\mathbf w)\,|\mathbf w|^{\alpha}\Big)
		\V_f(|\mathbf w|)\,d\mathbf w dt\\
		&\leq C\delta^{\alpha}\W_f(\epsilon,\delta),
	\end{align*}
	where we also used $\|g-g^\delta\|_{L^p(\Omega\times\RR)}\leq C\delta^\alpha\V_f(\delta)$, and, since $\nabla_\vv\cdot(\vv-\vv^\delta)=0$,
	\begin{align*}
		A_{32}&=\|(\vv-\vv^\delta)\cdot\nabla_\vv g^\delta\|_{L^1(0,T;L^p(\Omega\times\RR))}
		\leq C\delta\,\|\nabla_\vv g^\delta\|_{L^1(0,T;L^p(\Omega\times\RR))}\\
		&\leq C\delta\cdot\delta^{\alpha-1}\W_f(\epsilon,\delta)
		\leq C\delta^{\alpha}\W_f(\epsilon,\delta).
	\end{align*}
	This completes the proof.
\end{proof}

Now we show the renormalization property.
\begin{proof}
For any test function $\Theta\in\D((0,T)\times\Omega\times\RR)$, we have the weak formulation for the Vlasov equation
	\begin{equation*}
		\int_{0}^{T}\int_\Omega\int_{\RR}f(\partial_t\Theta+\vv\nabla_x\Theta+(\uu-\vv)\nabla_\vv\Theta)d\vv dx dt=0.
	\end{equation*}
	Choosing the test function
	\begin{eqnarray*}
		\Theta=\theta_h(\mathcal{R}'(f_h^{\epsilon,\delta,\eta})\Phi)^{\epsilon,\delta,\eta}\in\D((0,T)\times\Omega\times\RR),
	\end{eqnarray*}
	where $\Phi\in \D((0,T)\times\Omega\times\RR)$,
	$f_h:=\theta_h f$ and $\mathcal{R}\in \mathcal{S}_k(\R^+;\R^+)$ with $k\in\big[2,2p\big(1-\frac1q\big)\big]$.  Integration by parts, we obtain 
	\begin{align*}
		&\int_{0}^{T} \int_{\Omega} \int_{\RR} \mathcal{R}(f_h^{\epsilon,\delta,\eta}) \left( \partial_t \Phi + \mathbf{v}^\delta \nabla_x \Phi +
		(\mathbf{u}^{\epsilon,\eta} - \mathbf{v}^\delta) \nabla_{\mathbf{v}} \Phi \right) \\
		&\quad + \mathcal{R}'(f_h^{\epsilon,\delta,\eta}) \Phi \left( \mathbf{v}^\delta \nabla_x f_h^{\epsilon,\delta,\eta} - \nabla_x (\mathbf{v}
		f_h)^{\epsilon,\delta,\eta} + \uu^{\epsilon,\eta} \nabla_{\mathbf{v}} f_h^{\epsilon,\delta,\eta} \right. \\
		&\quad \left. - \nabla_{\mathbf{v}} (\mathbf{u} f_h)^{\epsilon,\delta,\eta} + \nabla_{\mathbf{v}} \cdot (\mathbf{v}
		f_h)^{\epsilon,\delta,\eta} - \nabla_{\mathbf{v}} \cdot (\mathbf{v}^\delta f_h^{\epsilon,\delta,\eta}) \right)\\
		&\quad+\Phi(\nabla_\vv\cdot\vv^\delta)\Big(\mathcal{R}'(f_h^{\epsilon,\delta,\eta})f_h^{\epsilon,\delta,\eta}-\mathcal{R}(f_h^{\epsilon,\delta,\eta})\Big)\\
		&\quad+ \vv f(R'(f_h^{\epsilon,\delta,\eta})\Phi)^{\epsilon,\delta,\eta}\nabla_x\theta_h d\vv dx dt:= I_1+I_2=0,
	\end{align*}
	where we define
	\begin{align*}
		I_2=\int_{0}^{T}\int_\Omega\int_{\RR}\vv f(R'(f_h^{\epsilon,\delta,\eta})\Phi)^{\epsilon,\delta,\eta}\nabla_x\theta_h d\vv dx dt.
	\end{align*}
Since $\Phi\in\D((0,T)\times\Omega\times\RR)$ is compactly supported,
there exists $d_0>0$ such that the spatial support of $\Phi$ is contained in $\{x\in\Omega:\ d(x,\partial\Omega)\geq d_0\}$.
Because the spatial convolution enlarges the support by at most $2\epsilon$,
for all $\epsilon<d_0/4$ the function $(\mathcal{R}'(f_h^{\epsilon,\delta,\eta})\Phi)^{\epsilon,\delta,\eta}$ is supported in
$\{d(x,\partial\Omega)\geq d_0/2\}$, while $\nabla_x\theta_h$ is supported in $\{h\leq d(x,\partial\Omega)\leq 2h\}$.
Hence, for all $h<d_0/4$ the two supports are disjoint and
\begin{align*}
	I_2=0.
\end{align*}
Therefore, we have 
\begin{align*}
	\lim_{h\to0} I_1+I_2&=\int_{0}^{T} \int_{\Omega} \int_{\RR} \mathcal{R}(f^{\epsilon,\delta,\eta}) \left( \partial_t \Phi +
	\mathbf{v}^\delta \nabla_x \Phi + (\mathbf{u}^{\epsilon,\eta} - \mathbf{v}^\delta) \nabla_{\mathbf{v}} \Phi \right) \\
		&\quad + \mathcal{R}'(f^{\epsilon,\delta,\eta}) \Phi \left( \mathbf{v}^\delta \nabla_x f^{\epsilon,\delta,\eta} - \nabla_x (\mathbf{v}
		f)^{\epsilon,\delta,\eta} + \uu^{\epsilon,\eta} \nabla_{\mathbf{v}} f^{\epsilon,\delta,\eta} \right. \\
		&\quad \left. - \nabla_{\mathbf{v}} (\mathbf{u} f)^{\epsilon,\delta,\eta} + \nabla_{\mathbf{v}} \cdot (\mathbf{v}
		f)^{\epsilon,\delta,\eta} - \nabla_{\mathbf{v}} \cdot (\mathbf{v}^\delta f^{\epsilon,\delta,\eta}) \right)\\
		&\quad+\Phi(\nabla_\vv\cdot\vv^\delta)\Big(\mathcal{R}'(f^{\epsilon,\delta,\eta})f^{\epsilon,\delta,\eta}-\mathcal{R}(f^{\epsilon,\delta,\eta})\Big) d\vv dxdt=0
\end{align*}

	Then, by Lemma \ref{le-error} we have
	\begin{align*}
		&\biggl| \int_{0}^{T} \int_{\Omega} \int_{\RR} \mathcal{R}(f^{\epsilon,\delta,\eta}) (\partial_t \Phi + \mathbf{v}^\delta \cdot \nabla_x
		\Phi + (\mathbf{u}^{\epsilon,\eta} - \mathbf{v}^\delta) \cdot \nabla_{\mathbf{v}} \Phi) \\
		&\quad +
		\Phi(\nabla_\vv\cdot\vv^\delta)\Big(\mathcal{R}'(f^{\epsilon,\delta,\eta})f^{\epsilon,\delta,\eta}-\mathcal{R}(f^{\epsilon,\delta,\eta})\Big) \,d\vv dx dt \biggr|\\
		&= \biggl|\int_{0}^{T}\int_\Omega\int_{\RR}\mathcal{R}'(f^{\epsilon,\delta,\eta})\Phi\left( \mathbf{v}^\delta \nabla_x
		f^{\epsilon,\delta,\eta} - \nabla_x (\mathbf{v} f)^{\epsilon,\delta,\eta} + \uu^{\epsilon,\eta} \nabla_{\mathbf{v}}
		f^{\epsilon,\delta,\eta} \right. \\
		&\quad \left. - \nabla_{\mathbf{v}} (\mathbf{u} f)^{\epsilon,\delta,\eta} + \nabla_{\mathbf{v}} \cdot (\mathbf{v}
		f)^{\epsilon,\delta,\eta} - \nabla_{\mathbf{v}} \cdot (\mathbf{v}^\delta f^{\epsilon,\delta,\eta}) \right)d\vv dx  dt\biggr|\\
		&\leq
		C\Big(\big(\epsilon^{\alpha-1}\delta\wedge\epsilon^{-1}\delta^{\alpha+1}\big)+\delta^\alpha\Big)\W_f(\epsilon,\delta)\\
		&\quad+C\epsilon^{\beta}\big(\epsilon^{\alpha}\delta^{-1}\wedge\delta^{\alpha-1}\big)
		\|f\|_{L^p(0,T;B_p^{\alpha,\infty}(\Omega\times\RR))}\notag\\
		&\qquad\times\|\uu\|_{B_q^{\beta,\infty}([0,T]\times\Omega)},
	\end{align*}
	where we used the H\"older inequality to pair $\mathcal{R}'(f^{\epsilon,\delta,\eta})\Phi$ with the commutators, which belong to
	$L^p$ and to $L^r$ with $\frac1r=\frac1p+\frac1q$ by Lemma \ref{le-error}. Since $f\in L^p(\Omega\times\RR)$ and $p'$, $r'$ are the
	conjugate exponents of $p$, $r$, this pairing requires
	\begin{align*}
		\Big(\frac{k}{2}-1\Big)p'\leq p \quad\text{and}\quad \Big(\frac{k}{2}-1\Big)r'\leq p,
	\end{align*}
	the sharper of which, in view of $\frac{p}{r'}=p\Big(1-\frac1p-\frac1q\Big)=p-1-\frac{p}{q}$, reads
	\begin{align*}
		k\leq 2p\Big(1-\frac1q\Big).
	\end{align*}

	Choosing $\delta=\epsilon$ and assuming that $\alpha+\beta>1$, we have
	\begin{align*}
		&\biggl| \int_{0}^{T} \int_{\Omega} \int_{\RR} \mathcal{R}(f^{\epsilon,\delta,\eta}) (\partial_t \Phi + \mathbf{v}^\delta \cdot \nabla_x
		\Phi + (\mathbf{u}^{\epsilon,\eta} - \mathbf{v}^\delta) \cdot \nabla_{\mathbf{v}} \Phi) \\
		&\quad + \Phi(\nabla_\vv\cdot\mathbf{v}^\delta) (f^{\epsilon,\delta,\eta} \mathcal{R}'(f^{\epsilon,\delta,\eta}) -
		\mathcal{R}(f^{\epsilon,\delta,\eta})) \, d\mathbf{v} \, dx \, dt \biggr|\\
		&\leq C\Big(\epsilon^{\alpha}\W_f(\epsilon,\delta)+\epsilon^{\alpha+\beta-1}\|f\|_{L^p(0,T;B_p^{\alpha,\infty}(\Omega\times\RR))}\|\uu\|_{B_q^{\beta,\infty}([0,T]\times\Omega)}\Big)\to0 \quad as \quad\epsilon,\delta\to0.
	\end{align*}
	Then we get the renormalization property \eqref{eq-renorm-1}. 
\end{proof}
Next we show the energy equality.
For the weak solution \((\uu, f)\), we take the test function 
\[
\Theta^{h,\epsilon} = \theta_h((\phi(t)\theta_h\uu)^\epsilon)^\epsilon,
\]
where \(\phi(t)\in\mathcal{D}(0,T)\) is a temporal cutoff function and \(\theta_h\) is the spatial cutoff near the boundary.

Testing the Navier--Stokes equation with \(\Theta^{h,\epsilon}\), we obtain
\begin{equation*}
	\int_{0}^{T}\int_\Omega 
	\Theta^{h,\epsilon}\cdot
	\Big(
	\uu_t + \nabla\cdot(\uu\otimes\uu) + \nabla P 
	- \mu\Delta\uu 
	+ \int_{\RR}(\uu-\vv)f\,d\vv
	\Big)
	dxdt = 0,
\end{equation*}
which, by the properties of convolution, can be rewritten as
\begin{equation}\label{eq-energy1}
	\begin{split}
	&\int_{0}^{T}\int_\Omega 
	\phi(t)(\theta_h\uu)^\epsilon
	\big(\theta_h\uu_t\big)^\epsilon dxdt+ \int_{0}^{T}\int_\Omega 
	\phi(t)(\theta_h\uu)^\epsilon\big( \theta_h\nabla\cdot(\uu\otimes\uu)\big)^\epsilon dxdt\\
	&\quad +
	\int_{0}^{T}\int_\Omega
	\phi(t)(\theta_h\uu)^\epsilon\big(-\mu\theta_h\Delta\uu\big)^\epsilon dxdt
	+\int_{0}^{T}\int_\Omega 
	\phi(t)(\theta_h\uu)^\epsilon\big(
	 \theta_h\nabla P\big)^\epsilon dxdt\\
	 &\quad+\int_{0}^{T}\int_\Omega 
	 \phi(t)(\theta_h\uu)^\epsilon\Big(\theta_h\int_{\RR}(\uu-\vv)f\,d\vv
	\Big)^\epsilon
	dxdt:
	=\sum_{i=1}^5I_i=0.
	\end{split}
\end{equation}
In the following, we investigate the five terms one by one.

\begin{proof}
		\textbf{Estimate of $I_1$.} By the time regularity of $\uu$ and the role of $\phi(t),$ we immediately have
	\begin{eqnarray*}
		I_1:=\int_{0}^{T}\int_\Omega\phi(t)(\theta_h \uu)^\epsilon\cdot(\theta_h\uu_t)^\epsilon dxdt
		=\int_{0}^{T}\phi(t)\frac{1}{2}\frac{d}{dt}\left(\int_\Omega|(\theta_h\uu)^\epsilon|^2 dx\right)dt,
	\end{eqnarray*}
	which yields 
	\begin{eqnarray}\label{t1}
		\lim_{\epsilon,h\to0}I_1
		=\frac{1}{2}\int_{0}^{T}\phi(t)\frac{d}{dt}\left(\int_\Omega|\uu|^2 dx\right)dt.
	\end{eqnarray}
	
	\textbf{Estimates of $I_2$.} We divide $I_2$ into two terms:
	\begin{align*}
		I_2:=&\int_{0}^{T}\int_\Omega \phi(t)(\theta_h \uu)^\epsilon\cdot (\theta_h\nabla\cdot(\uu\otimes\uu))^\epsilon dxdt\\
		=&-\int_{0}^{T}\int_\Omega \phi(t)(\nabla\theta_h(\uu\otimes\uu))^\epsilon  (\theta_h\uu)^\epsilon dxdt-\int_{0}^{T}\int_\Omega
		\phi(t)(\theta_h(\uu\otimes\uu))^\epsilon \nabla\cdot(\theta_h\uu)^\epsilon dxdt\\
		=:&A_1+A_2.
	\end{align*} For the first term $A_1$ we have 

	\begin{align*}
		A_1
		&=\int_{0}^{T}\int_\Omega \phi(t)\left[(\uu\otimes\uu)^\epsilon\nabla\theta_h-(\nabla\theta_h(\uu\otimes\uu))^\epsilon\right] 
		(\theta_h\uu)^\epsilon dxdt\\
		&\quad+\int_{0}^{T}\int_\Omega\phi(t) [\uu\otimes\uu^\epsilon-(\uu\otimes\uu)^\epsilon]\nabla\theta_h (\theta_h\uu)^\epsilon dxdt\notag\\
		&\quad-\int_{0}^{T}\int_\Omega \phi(t)\uu\otimes\uu^\epsilon\nabla\theta_h (\theta_h\uu)^\epsilon dxdt\\
		&=A_{11}+A_{12}+A_{13}.
	\end{align*}
	Note that $\uu$ is bounded in $L^q(0,T;L^q(\Omega))$, and recall that $h=\epsilon^\nu$ for some $0<\nu<1$.
	Both products in $A_{11}$ are supported on the strip $\{h\le d(x,\partial\Omega)\le 2h\}$ (the second one up to
	the $O(\epsilon)$-enlargement caused by the mollification, with $\epsilon\le h$), so that
	\begin{align*}
		|A_{11}|
		&\leq \|\phi\|_{C^0}\int_{0}^{T}\int_\Omega \Big|\left[(\uu\otimes\uu)^\epsilon\nabla\theta_h
		-(\nabla\theta_h(\uu\otimes\uu))^\epsilon\right]\notag\\
		&\qquad\qquad\times(\theta_h\uu)^\epsilon\Big|\,dxdt\\
		&\leq C \|\phi\|_{C^0}\|(\theta_h\uu)^\epsilon\|_{L^q(0,T;L^q(\Omega_{3h}))}
		\|\nabla\theta_h\|_{L^{\frac{q}{q-3}}(0,T;L^{\frac{q}{q-3}}(\Omega))}\notag\\
		&\qquad\cdot\Big(\|(\uu\otimes\uu)^\epsilon\|_{L^{\frac q2}(0,T;L^{\frac q2}(\Omega))}
		+\|(\nabla\theta_h(\uu\otimes\uu))^\epsilon\|_{L^{\frac q2}(0,T;L^{\frac q2}(\Omega))}\Big)\\
		&\leq C \|\phi\|_{C^0}\|\uu\|_{B_q^{\beta,\infty}([0,T]\times\Omega)}^3\,h^{\beta-\frac3q}\to 0,
	\end{align*}
	where we used $|\nabla\theta_h|\le C/h$ (hence $\|\nabla\theta_h\|_{L^{q/(q-3)}}\le Ch^{-3/q}$), Jensen's inequality
	$\|(\nabla\theta_h(\uu\otimes\uu))^\epsilon\|_{L^{q/2}}\le\|\nabla\theta_h(\uu\otimes\uu)\|_{L^{q/2}}
	\le\|\nabla\theta_h\|_{L^{q/(q-3)}}\|\uu\otimes\uu\|_{L^{q/2}}$, and the boundary strip estimate
	$\|\uu\|_{L^q(\Omega_{3h})}\le Ch^\beta\|\uu\|_{B_q^{\beta,\infty}([0,T]\times\Omega)}$, which follows from
	$\uu=0$ on $\partial\Omega$. This consumes no $\nu$-window beyond $\beta>\frac3q$, which is already implied by
	the lower bound on $\beta$ in Theorem \ref{renormalization}. Similarly we have 
	\begin{align*}
		|A_{12}|
		&\leq\int_{0}^{T}\int_\Omega \Big|\phi(t) [\uu\otimes\uu^\epsilon-(\uu\otimes\uu)^\epsilon]\nabla\theta_h (\theta_h\uu)^\epsilon\Big|
		dxdt\\
		&\leq C \|\phi\|_{C^0}\|\uu\|^2_{L^q(0,T;L^q(\Omega))}\frac1h\|\uu\|_{B^{\beta, \infty}_q}{\epsilon^\beta}h^{1-\frac3q}\to 0,
	\end{align*}
	as $\epsilon\to0$, provided 
	\begin{equation}
		\label{eq-qbeta-1}
		\nu<\frac q3\beta, \quad q>3.
	\end{equation}
	Note that 
	\begin{align*}
		A_{13}
		&=-\int_{0}^{T}\int_\Omega \phi(t)\uu\otimes\uu^\epsilon\nabla\theta_h (\theta_h\uu)^\epsilon dxdt\\
		&=-\int_{0}^{T}\int_\Omega\phi(t)\uu\otimes(\uu^\epsilon-\uu^\epsilon(\sigma(x)))\nabla\theta_h(\theta_h\uu^\epsilon) dxdt
		-\int_{0}^{T}\int_\Omega\phi(t)\uu\otimes\uu^\epsilon(\sigma(x))\nabla\theta_h(\theta_h\uu^\epsilon) dxdt\\
		&=:A_{131}+A_{132},
	\end{align*}
	where $\sigma(x)$ denotes a nearest point to $x$ on $\partial\Omega$. For $A_{131}$, we have
	\begin{align*}
		|A_{131}|
		&\leq\frac{C}{h}\|\phi\|_{C^0}\|\uu\|^2_{L^q(0,T;L^q(\Omega))}\|\uu^\epsilon-\uu^\epsilon(\sigma(x))\|_{L^q(0,T;L^q(\Omega))}h^\frac{q-3}{q}\\
		&\leq C \|\uu\|^2_{L^q(0,T;L^q(\Omega))}\|\nabla\uu^\epsilon\|_{L^q(0,T;L^q(\Omega))}h^\frac{q-3}{q}\\
		&\leq C \|\uu\|^2_{L^q(0,T;L^q(\Omega))}\|\uu\|_{B_q^{\beta,\infty}([0,T]\times\Omega)} h^\frac{q-3}{q}\epsilon^{\beta-1},
	\end{align*}
	where we used 
	\begin{eqnarray*}
		\|\uu^\epsilon(x)-\uu^\epsilon(\sigma(x))\|_{L^q(0,T;L^q(\Omega))}\leq C(q,\Omega) h \|\nabla\uu^\epsilon\|_{L^q(0,T;L^q(\Omega))}.
	\end{eqnarray*}
	For $A_{132}$, since $\uu=0$ on $\partial\Omega$ and $q\beta>1$, the scaled trace estimate
	\begin{align*}
		\|\uu^\epsilon\|_{L^q(0,T;L^q(\partial\Omega))}\leq C\epsilon^{\beta-\frac1q}\|\uu\|_{B_q^{\beta,\infty}([0,T]\times\Omega)}
	\end{align*}
	and a pull-back of the integral over the strip $\Omega_h:=\{x\in\Omega: h\leq d(x,\partial\Omega)\leq 2h\}$ to $\partial\Omega$ give
	\begin{align*}
		|A_{132}|
		&\leq \frac{C}{h}\|\phi\|_{C^0}\|\uu\|^2_{L^q(0,T;L^q(\Omega))}
		\|\uu^\epsilon(\sigma(x))\|_{L^q(0,T;L^q(\Omega_h))}\\
		&\leq \frac{C}{h}\|\uu\|^2_{L^q(0,T;L^q(\Omega))}\cdot h^{\frac1q}\|\uu^\epsilon\|_{L^q(0,T;L^q(\partial\Omega))}\\
		&\leq C\|\uu\|^2_{L^q(0,T;L^q(\Omega))}\|\uu\|_{B_q^{\beta,\infty}([0,T]\times\Omega)}\,
		\epsilon^{\beta-\frac1q}h^{-\frac{q-1}{q}}.
	\end{align*}
	After choosing 
	\begin{equation}\label{eq-qbeta-2}
		\frac{q(1-\beta)}{q-3}<\nu<\frac{q\beta-1}{q-1}, \quad q>3, \quad \beta>\frac3q
	\end{equation}
	(this interval is nonempty provided $\beta>\frac{q^2-3}{2q(q-2)}$, which is compatible with the lower bound on $\beta$
	assumed in Theorem \ref{renormalization}), we get
$$
		\lim_{\epsilon,h\to0}A_{13}=0.
$$
	
	For the second term $A_2$, we calculate it as follows
	\begin{align*}
		A_2
		&=-\int_{0}^{T}\int_\Omega \phi(t)\uu\otimes(\theta_h\uu)^\epsilon\nabla(\theta_h\uu)^\epsilon dxdt\\
		&\qquad \qquad+\int_{0}^{T}\int_\Omega
		\phi(t)(\uu\otimes(\theta_h\uu)^\epsilon-(\uu\otimes\uu\theta_h)^\epsilon)\nabla(\theta_h\uu)^\epsilon dxdt\\
		&=-\int_{0}^{T}\int_\Omega \phi(t)\uu\cdot\nabla\frac{1}{2}|(\theta_h\uu)^\epsilon|^2 dxdt\\
		&\qquad \qquad+\int_{0}^{T}\int_\Omega
		\phi(t)(\uu\otimes(\theta_h\uu)^\epsilon-(\uu\otimes\uu\theta_h)^\epsilon)\nabla(\theta_h\uu)^\epsilon dxdt\\
		&=\int_{0}^{T}\int_\Omega \phi(t)(\uu\otimes(\theta_h\uu)^\epsilon-(\uu\otimes\uu\theta_h)^\epsilon)\nabla(\theta_h\uu)^\epsilon dxdt
		:=A_{21}.
	\end{align*}
	where we used the fact that $\mathrm{div}\uu=0$ in the distribution sense. Note that $q>n+2\geq4$, so that
	$L^{\frac q2}(\Omega)\hookrightarrow L^2(\Omega)$. By the H\"older inequality and the first estimate of Lemma
	\ref{le-communicate} (with $f=\uu$ and $g=(\theta_h\uu)^\epsilon$), we have
	\begin{align*}
		|A_{21}|
		&\leq \|\phi\|_{C^0}\big\|\uu\otimes(\theta_h\uu)^\epsilon-(\uu\otimes\theta_h\uu)^\epsilon\big\|_{L^{\frac q2}(0,T;L^{\frac q2}(\Omega))}
		\big\|\nabla(\theta_h\uu)^\epsilon\big\|_{L^2(0,T;L^2(\Omega))}\\
		&\leq C\epsilon^{\beta}\|\uu\|_{B_q^{\beta,\infty}([0,T]\times\Omega)}\big\|\nabla(\theta_h\uu)^\epsilon\big\|_{L^2(0,T;L^2(\Omega))}.
	\end{align*}
	Since
	\begin{align*}
		\big\|\nabla(\theta_h\uu)^\epsilon\big\|_{L^2(0,T;L^2(\Omega))}
		&\leq \|(\theta_h\nabla\uu)^\epsilon\|_{L^2(0,T;L^2(\Omega))}+\|(\uu\nabla\theta_h)^\epsilon\|_{L^2(0,T;L^2(\Omega))}\\
		&\leq \|\nabla\uu\|_{L^2(0,T;L^2(\Omega))}+\frac{C}{h}\|\uu\|_{L^q(0,T;L^q(\Omega))}h^{\frac12-\frac1q}
		\leq C\big(1+h^{-\frac12-\frac1q}\big),
	\end{align*}
	we infer
	\begin{align*}
		|A_{21}|\leq C\epsilon^{\beta}\big(1+h^{-\frac12-\frac1q}\big)\to0,
	\end{align*}
	provided that
	\begin{equation}\label{eq-qbeta-3}
		\nu<\frac{2q\beta}{q+2}.
	\end{equation}
	Thus, we obtain 
$
		\lim_{\epsilon,h\to0}A_{21}=0.
$
We can also conclude that $\lim_{\epsilon,h\to0}I_{2}=0$ if \eqref{eq-qbeta-1}--\eqref{eq-qbeta-3} hold (recall that \eqref{eq-qbeta-1} is automatic, since $\beta>\frac3q$ and $0<\nu<1$).

	\textbf{Estimate of $I_3$} For the viscous term $I_3$, we have 
	\begin{align*}
	I_3	&=\mu\int_{0}^{T}\int_\Omega\phi(t)(\nabla\uu\cdot\nabla\theta_h)^\epsilon(\theta_h\uu)^\epsilon dxdt
		+\mu\int_{0}^{T}\int_\Omega\phi(t)(\nabla\uu\theta_h)^\epsilon\nabla(\theta_h\uu)^\epsilon dxdt\\
		&:=B_1+B_2.
	\end{align*}
	We decompose $B_1$ as follows
	\begin{align*}
		B_1
		&:=\mu\int_{0}^{T}\int_\Omega\phi(t)(\nabla\uu\cdot\nabla\theta_h)^\epsilon(\theta_h\uu)^\epsilon dxdt\\
		&=\mu\int_{0}^{T}\int_\Omega \phi(t)((\nabla\uu\cdot\nabla\theta_h)^\epsilon-\nabla\uu^\epsilon\cdot\nabla\theta)(\theta_h\uu)^\epsilon
		dxdt\\
		&\quad+\mu\int_{0}^{T}\int_\Omega\phi(t)\nabla\uu^\epsilon\cdot\nabla\theta((\theta_h\uu)^\epsilon-\theta_h\uu^\epsilon)dxdt\\
		&\quad+\mu\int_{0}^{T}\int_\Omega\phi(t) \nabla\uu^\epsilon\cdot\nabla\theta_h(\theta_h\uu^\epsilon)dxdt\\
		&=:B_{11}+B_{12}+B_{13}.    
	\end{align*}
	Note that $\nabla\uu$ is bounded in $L^2(0,T;L^2(\Omega))$, one obtains
	\begin{align*}
		|B_{11}|
		&\leq\mu\int_{0}^{T}\int_\Omega
		\Big|\phi(t)((\nabla\uu\cdot\nabla\theta_h)^\epsilon-\nabla\uu^\epsilon\cdot\nabla\theta)(\theta_h\uu)^\epsilon\Big|dxdt \\
		&\leq C\|\phi\|_{C^0}\|\nabla\uu\|_{L^2(0,T;L^2(\Omega))}\|\uu\|_{L^q(0,T;L^q(\Omega))}\frac{\epsilon}{h^{\frac{1}{2}+\frac{1}{q}}},
	\end{align*}
	and control $B_{12}$ as follows 
	\begin{align*}
		|B_{12}|
		&\leq\mu\int_{0}^{T}\int_\Omega \Big|\phi(t)\nabla\uu^\epsilon\cdot\nabla\theta((\theta_h\uu)^\epsilon-\theta_h\uu^\epsilon)\Big|dxdt \\
		&\leq C\|\phi\|_{C^0}\|\nabla\uu\|_{L^2(0,T;L^2(\Omega))}\|\uu\|_{L^q(0,T;L^q(\Omega))}\frac{\epsilon}{h^{\frac{1}{2}+\frac{1}{q}}}.
	\end{align*}
	We give a control on $B_{13}$. Splitting
	$\theta_h\uu^\epsilon=(\uu^\epsilon(x)-\uu^\epsilon(\sigma(x)))\theta_h+\uu^\epsilon(\sigma(x))\theta_h$
	with $\sigma(x)$ a nearest point to $x$ on $\partial\Omega$, we have $B_{13}\leq B_{131}+B_{132}$, where
	\begin{align*}
		B_{131}
		&:=\mu\int_{0}^{T}\int_\Omega\Big|\phi(t)\nabla\uu^\epsilon\cdot\nabla\theta_h(\uu^\epsilon(x)-\uu^\epsilon(\sigma(x)))\theta_h\Big| dxdt\\
		&\leq
		\frac{C\mu}{h}\|\phi\|_{C^0}\|\nabla\uu\|_{L^2(0,T;L^2(\Omega))}\|\uu^\epsilon(x)-\uu^\epsilon(\sigma(x))\|_{L^q(0,T;L^q(\Omega))}h^\frac{q-2}{2q}\\
		&\leq C\mu\|\nabla\uu\|_{L^2(0,T;L^2(\Omega))}\|\uu\|_{B_q^{\beta,\infty}([0,T]\times\Omega)}h^\frac{q-2}{2q}\epsilon^{\beta-1},
	\end{align*}
	and, by the scaled trace estimate
	$\|\uu^\epsilon\|_{L^q(0,T;L^q(\partial\Omega))}\leq C\epsilon^{\beta-\frac1q}\|\uu\|_{B_q^{\beta,\infty}([0,T]\times\Omega)}$,
	\begin{align*}
		B_{132}
		&:=\mu\int_{0}^{T}\int_\Omega\Big|\phi(t)\nabla\uu^\epsilon\cdot\nabla\theta_h\,\uu^\epsilon(\sigma(x))\theta_h\Big|dxdt\\
		&\leq \frac{C\mu}{h}\|\phi\|_{C^0}\|\nabla\uu\|_{L^2(0,T;L^2(\Omega))}
		h^{\frac12-\frac1q}\|\uu^\epsilon(\sigma(x))\|_{L^q(0,T;L^q(\Omega_h))}\\
		&\leq C\mu\|\nabla\uu\|_{L^2(0,T;L^2(\Omega))}\|\uu\|_{B_q^{\beta,\infty}([0,T]\times\Omega)}\,
		h^{-\frac12}\epsilon^{\beta-\frac1q}.
	\end{align*}
	If 
	\begin{equation}\label{eq-qbeta-4}
		\frac{2q(1-\beta)}{q-2}<\nu<\min\Big\{2\Big(\beta-\frac1q\Big),\frac{2q}{q+2}\Big\}, \quad q>2,\quad \beta>\frac{1}{2}+\frac{1}{q}
	\end{equation}
	(the first upper bound comes from $B_{132}$, the second one from $B_{11}$ and $B_{12}$; note that
	$2(\beta-\frac1q)>1$ and $\frac{2q(1-\beta)}{q-2}<\frac{2q}{q+2}$ under $\beta>\frac12+\frac1q$, so the interval is
	nonempty), then we obtain $B_{1}\to0$ as $(\epsilon,h)\to 0$.

	For the term $B_2$, we have 
	\begin{align*}
		B_2
		&:=\mu\int_{0}^{T}\int_\Omega\phi(t)(\nabla\uu\theta_h)^\epsilon\nabla(\theta_h\uu)^\epsilon dxdt\notag\\
		&=\mu\int_{0}^{T}\int_\Omega\phi(t)((\theta_h\nabla\uu)^\epsilon-(\theta_h\nabla\uu^\epsilon))\nabla(\theta_h\uu)^\epsilon dxdt\\
		&\quad+\mu\int_{0}^{T}\int_\Omega\phi(t)(\theta_h\nabla\uu^\epsilon)\cdot(\nabla(\theta_h\uu)^\epsilon-\nabla(\theta_h\uu^\epsilon))dxdt\\
		&\quad+\mu\int_{0}^{T}\int_\Omega\phi(t)(\theta_h\nabla\uu^\epsilon)\cdot\nabla(\theta_h\uu^\epsilon)dxdt\\
		&=:B_{21}+B_{22}+B_{23}
	\end{align*}
	We have 
	\begin{align*}
		|B_{21}|
		&\leq \mu\int_{0}^{T}\int_\Omega\Big|\phi(t)((\theta_h\nabla\uu)^\epsilon-\theta_h\nabla\uu^\epsilon)\nabla(\theta_h\uu)^\epsilon\Big|
		dxdt\\
		&\leq C\mu\|\phi\|_{C^0}\big\|(\theta_h\nabla\uu)^\epsilon-\theta_h\nabla\uu^\epsilon\big\|_{L^2(0,T;L^2(\Omega))}
		\big\|\nabla(\theta_h\uu)^\epsilon\big\|_{L^2(0,T;L^2(\Omega))}.
	\end{align*}
	Since $\theta_h\nabla\uu^\epsilon=\theta_h(\nabla\uu)^\epsilon$, the first factor is a commutator of $\theta_h$ and $\nabla\uu$: writing
	\[
	(\theta_h\nabla\uu)^\epsilon-\theta_h(\nabla\uu)^\epsilon
	=\int_{\RR}m_\epsilon(y)\big(\theta_h(x-y)-\theta_h(x)\big)\nabla\uu(x-y)\,dy
	\]
	and using $|\theta_h(x-y)-\theta_h(x)|\leq C\frac{|y|}{h}$, we get
	\begin{align*}
		&\big\|(\theta_h\nabla\uu)^\epsilon-\theta_h\nabla\uu^\epsilon\big\|_{L^2(0,T;L^2(\Omega))}\\
		&\quad\leq
		C\frac{\epsilon}{h}\|\nabla\uu\|_{L^2(0,T;L^2(\Omega))}.
	\end{align*}
	For the second factor, $\nabla(\theta_h\uu)^\epsilon=(\theta_h\nabla\uu)^\epsilon+(\uu\nabla\theta_h)^\epsilon$ with
	$\|(\theta_h\nabla\uu)^\epsilon\|_{L^2(0,T;L^2(\Omega))}\leq \|\nabla\uu\|_{L^2(0,T;L^2(\Omega))}$ and
	\begin{align*}
		\|(\uu\nabla\theta_h)^\epsilon\|_{L^2(0,T;L^2(\Omega))}
		\leq \frac{C}{h}\|\uu\|_{L^q(0,T;L^q(\Omega_{3h}))}h^{\frac12-\frac1q}\notag\\
		\leq Ch^{\beta-\frac12-\frac1q}\|\uu\|_{B_q^{\beta,\infty}([0,T]\times\Omega)},
	\end{align*}
	where we used $\|\uu\|_{L^q(\Omega_{3h})}=\|\uu-\uu(\sigma(\cdot))\|_{L^q(\Omega_{3h})}
	\leq Ch^\beta\|\uu\|_{B_q^{\beta,\infty}([0,T]\times\Omega)}$, which follows from $\uu=0$ on $\partial\Omega$ and
	$|x-\sigma(x)|\leq 3h$ on $\Omega_{3h}$. Combining the above, we conclude
	\begin{align*}
		|B_{21}|\leq C\frac{\epsilon}{h}\big(1+h^{\beta-\frac12-\frac1q}\big)\to0
	\end{align*}
	provided
	\begin{equation}\label{eq-qbeta-5}
		\nu<\frac{2q}{3q+2-2q\beta}.
	\end{equation}
	So  $B_{21}\to0$ as $(\epsilon,h)\to0$. In the same way, we have  $B_{22}\to0$ as $(\epsilon,h)\to0$.
	The last term of $B_2$ is bounded as follows
	\begin{align*}
		B_{23}&:=\mu\int_{0}^{T}\int_\Omega\phi(t)(\theta_h\nabla\uu^\epsilon)\cdot\nabla(\theta_h\uu^\epsilon)dxdt\\
		&=\mu\int_{0}^{T}\int_\Omega\phi(t)|\theta_h\nabla\uu^\epsilon|^2dxdt
		+\mu\int_{0}^{T}\int_\Omega\phi(t)(\theta_h\nabla\uu^\epsilon):\uu^\epsilon\nabla\theta_h dxdt.
	\end{align*}
	Since $\nabla\uu$ is bounded in $L^2(0,T;L^2(\Omega))$, we have 
	\begin{align*}
		&\lim_{\epsilon,h\to0}\mu\int_{0}^{T}\int_\Omega\phi(t)|\theta_h\nabla\uu^\epsilon|^2dxdt\\
		=&\lim_{\epsilon,h\to0}\mu\int_{0}^{T}\int_{d(x)\geq2h}\phi(t)|\theta_h\nabla\uu^\epsilon|^2dxdt+\lim_{\epsilon,h\to0}\mu\int_{0}^{T}\int_{h\leq d(x)\leq2h}\phi(t)|\theta_h\nabla\uu^\epsilon|^2dxdt\\
		=&\mu\int_{0}^{T}\int_{\Omega}\phi(t)|\nabla\uu|^2dxdt+\lim_{\epsilon,h\to0}\mu\int_{0}^{T}\int_{h\leq
		d(x)\leq2h}\phi(t)|\theta_h\nabla\uu^\epsilon|^2dxdt.
	\end{align*}
	Besides, splitting $\uu^\epsilon=(\uu^\epsilon(x)-\uu^\epsilon(\sigma(x)))+\uu^\epsilon(\sigma(x))$ as before,
	\begin{align*}
		&\Big|\mu\int_{0}^{T}\int_\Omega\phi(t)(\theta_h\nabla\uu^\epsilon):\uu^\epsilon\nabla\theta_h dxdt\Big|\\
		\leq& \frac{C\mu}{h}\Big|\int_{0}^{T}\int_\Omega\phi(t)(\theta_h\nabla\uu^\epsilon)(\uu^\epsilon(x)-\uu^\epsilon(\sigma(x))) dxdt\Big|
		+\frac{C\mu}{h}\Big|\int_{0}^{T}\int_\Omega\phi(t)(\theta_h\nabla\uu^\epsilon)\,\uu^\epsilon(\sigma(x))\, dxdt\Big|\\
		\leq &C\mu\epsilon^{\beta-1}\|\phi\|_{C^0}\|\uu\|_{B_q^{\beta,\infty}}\|\nabla\uu\|_{L^2(0,T;L^2(\Omega))} h^\frac{q-2}{2q}
		+C\mu\epsilon^{\beta-\frac1q}h^{-\frac12}\|\phi\|_{C^0}\|\uu\|_{B_q^{\beta,\infty}}\|\nabla\uu\|_{L^2(0,T;L^2(\Omega))}
		\to0,
	\end{align*}
	as $(\epsilon,h)\to0$ under \eqref{eq-qbeta-4}. Moreover, by Jensen's inequality
	$|\nabla\uu^\epsilon|^2\leq(|\nabla\uu|^2)^\epsilon$, hence
	\begin{align*}
		\int_{0}^{T}\int_{h\leq d(x)\leq2h}\phi(t)|\theta_h\nabla\uu^\epsilon|^2dxdt
		\leq \|\phi\|_{C^0}\int_{0}^{T}\int_{d(x)\leq3h}|\nabla\uu|^2dxdt\longrightarrow0,
	\end{align*}
	since $|\nabla\uu|^2\in L^1((0,T)\times\Omega)$ and the measure of the strip tends to zero.
	Therefore, once \eqref{eq-qbeta-4} and \eqref{eq-qbeta-5} hold we obtain 
	\begin{eqnarray}\label{t3}
		\lim_{\epsilon,h\to0}I_3=\mu\int_{0}^{T}\int_{\Omega}\phi(t)|\nabla\uu|^2dxdt.
	\end{eqnarray}

	\textbf{Estimate of $I_4$ } 
	Now we are in a position to bound the pressure terms.
	\begin{lemma}\label{lem:pressure-elliptic}
Let $\Omega\subset\mathbb{R}^n$ be a bounded domain with smooth boundary.
Assume that the pressure $P$ solves
\begin{align}
    -\Delta P
    &= \sum_{i,j=1}^n \partial_{x_i}\partial_{x_j}(u_i u_j)
        + \nabla\cdot \int_{\mathbb{R}^n} (\uu-\vv) f \, d\vv
    \quad \text{in } \Omega, \label{P-elliptic-new}\\
    P &= 0 \quad \text{on } \partial\Omega. \notag
\end{align}
Then for any $1<r<\infty$, there exists a constant $C>0$ depending only on $r$ and $\Omega$ such that
\begin{align}\label{P-W2r}
    \|P\|_{L^{r}(\Omega)}
    \le C \Big(
        \|(\uu\otimes \uu)\|_{L^r(\Omega)}
        + \|\uu\,\bar\rho\|_{L^r(\Omega)}
        + \|j\|_{L^r(\Omega)}
    \Big),
\end{align}
\end{lemma}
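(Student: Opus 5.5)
The plan is to obtain \eqref{P-W2r} by duality, using the $W^{2,r'}$ regularity theory for the Dirichlet Laplacian on the smooth bounded domain $\Omega$.

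\emph{Setup.} We write the right-hand side of \eqref{P-elliptic-new} in divergence form,
\[
-\Delta P=\nabla\cdot\big(\nabla\cdot(\uu\otimes\uu)\big)+\nabla\cdot G,\qquad G:=\uu\,\bar\rho-j,
\]
which uses $\int_{\RR}(\uu-\vv)f\,d\vv=\uu\bar\rho-j$. By the triangle inequality, $\|G\|_{L^r}\le\|\uu\bar\rho\|_{L^r}+\|j\|_{L^r}$. By linearity we split $P=P_1+P_2$, where $P_1$ solves the problem with the second-order source $\partial_i\partial_j(u_iu_j)$ and $P_2$ the problem with the first-order source $\nabla\cdot G$. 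Both carry zero Dirichlet data, and both are understood in the very weak (transposition) sense.

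\emph{Duality step.} Fix $g\in C_c^\infty(\Omega)$ and let $\psi$ solve $-\Delta\psi=g$ in $\Omega$ with $\psi=0$ on $\partial\Omega$. The Agmon--Douglis--Nirenberg (Calder\'on--Zygmund) estimate on smooth bounded domains gives, for $1<r'<\infty$,
\[
\|\psi\|_{W^{2,r'}(\Omega)}\le C(r,\Omega)\|g\|_{L^{r'}(\Omega)}.
\]
Testing the equation for $P$ against $\psi$ and integrating by parts twice in the quadratic term and once in the drag term yields
\[
\int_\Omega Pg\,dx=\int_\Omega u_iu_j\,\partial_i\partial_j\psi\,dx-\int_\Omega G\cdot\nabla\psi\,dx.
\]
Applying H\"older's inequality to each term,
\[
\Big|\int_\Omega Pg\Big|\le\|\uu\otimes\uu\|_{L^r}\|\nabla^2\psi\|_{L^{r'}}+\|G\|_{L^r}\|\nabla\psi\|_{L^{r'}}\le C\big(\|\uu\otimes\uu\|_{L^r}+\|\uu\bar\rho\|_{L^r}+\|j\|_{L^r}\big)\|g\|_{L^{r'}}.
\]
Taking the supremum over $g$ gives \eqref{P-W2r}. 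This argument uses only the ADN estimate and does not require $W^{2,r}$ regularity of $P$ itself.

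\emph{Main obstacle: boundary terms.} The delicate point is justifying the two integrations by parts, in particular that they produce no boundary contributions. Moving $\partial_i\partial_j$ from $u_iu_j$ onto $\psi$ creates boundary terms of the form $\int_{\partial\Omega}u_iu_j\nu_j\partial_i\psi$ and $\int_{\partial\Omega}\partial_j(u_iu_j)\nu_i\psi$. The second vanishes because $\psi=0$ on $\partial\Omega$. The first vanishes because $\uu=0$ on $\partial\Omega$, since $\uu\in H_0^1$. The drag term produces $\int_{\partial\Omega}(G\cdot\nu)\psi$, which again vanishes since $\psi=0$.

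To make this rigorous, the plan is to define the very weak solution precisely through the identity above; this is the meaning of \eqref{P-elliptic-new} together with $P=0$ on $\partial\Omega$. Equivalently, one can approximate $\uu\otimes\uu$ and $G$ by smooth functions compactly supported in $\Omega$, which is possible in $L^r$ because $C_c^\infty$ is dense. For such data the computation is classical, and the estimate then passes to the limit. Uniqueness of very weak solutions in $L^r$ follows from the same duality argument, so the limit coincides with $P$.

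Finally, in the application one has $r$ with $\frac1r=\frac1q+\frac1{p_1}$. In that case $\|\uu\bar\rho\|_{L^r}\le\|\uu\|_{L^q}\|\bar\rho\|_{L^{p_1}}$ by H\"older and Lemma \ref{le-f-estimate}, while $\|j\|_{L^r}$ is controlled through $p_2$ whenever $r\le p_2$.
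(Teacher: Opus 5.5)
Your argument is correct, but it is organized differently from the paper's. The paper splits $P=P_1+P_2$ and cites the $L^p$ theory for divergence-form data twice. This gives $\|P_1\|_{L^r}\le C\|\uu\otimes\uu\|_{L^r}$ directly and $\|\nabla P_2\|_{L^r}\le C(\|\uu\bar\rho\|_{L^r}+\|j\|_{L^r})$ for the drag part, after which $\|P_2\|_{L^r}$ follows from Poincar\'e and $P_2|_{\partial\Omega}=0$. You handle both pieces in a single transposition argument that needs only the $W^{2,r'}$ estimate for the adjoint Dirichlet problem. For the drag part, the bound on $\|\nabla\psi\|_{L^{r'}}$ then replaces both the $W^{1,r}$ theory and Poincar\'e. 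The paper's route yields the extra (unused) information $\nabla P_2\in L^r$, while yours is more self-contained and uniform.

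Your route also makes explicit something the paper's citation leaves implicit. The bound for $P_1$ is a bound for the very weak solution, and it coincides with the Dirichlet solution only because $\int_{\partial\Omega}(\uu\cdot\nu)^2\,\partial_\nu\psi\,d\sigma=0$. This is a genuine requirement, not a formality. Take $F=\phi I$ with smooth $\phi$ equal to $1$ on $\partial\Omega$ and supported in a thin boundary layer. The classical solution of $-\Delta P_1=\mathrm{div}\,\mathrm{div}F$, $P_1|_{\partial\Omega}=0$, is $P_1=1-\phi$, whose $L^r$ norm stays bounded below while $\|F\|_{L^r}\to0$.

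For the same reason, approximating the data by $C_c^\infty(\Omega)$ functions only produces the transposition solution, whatever the boundary behavior of the data. What identifies that solution with $P$ is your integration-by-parts identity using $\uu=0$ on $\partial\Omega$. Alternatively, you can take that identity as the meaning of the boundary condition, which is in line with the paper's own remark on the pressure normalization.
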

\begin{proof}
	Let $P_1$ solve
	\[ -\Delta P_1=\mathrm{div}\,\mathrm{div}(\uu\otimes \uu),\quad P_1|_{\partial\Omega}=0\]
	and $P_2$ solve
	\[-\Delta P_2=\nabla\cdot \int_{\mathbb{R}^n} (\uu-\vv) f \, d\vv, \quad P_2|_{\partial\Omega}=0.\]
	Then by the $L^p$ theory of the Poisson equation with force of divergence form, we immediately have
	\begin{align}
		&\|P_1\|_{L^{r}(\Omega)}
		\le C 
		\|(\uu\otimes \uu)\|_{L^r(\Omega)}  \label{eq-p1-bound}\\
		&\|\nabla P_2\|_{L^{r}(\Omega)}
		\le C (
		\|\uu\,\bar\rho\|_{L^r(\Omega)}
		+ \|j\|_{L^r(\Omega)})\label{eq-p2-bound}
	\end{align}
for any $1<r<\infty$.
It remains to bound $P_2$ itself. Since $P_2|_{\partial\Omega}=0$, the Poincar\'e inequality applies and gives
$\|P_2\|_{L^r(\Omega)}\le C\|\nabla P_2\|_{L^r(\Omega)}$ for every $1<r<\infty$; when $r<n$, one may alternatively
invoke the Sobolev embedding $W^{1,r}_0(\Omega)\hookrightarrow L^{\frac{nr}{n-r}}(\Omega)$, which yields the same
conclusion since $\frac{nr}{n-r}>r$ on the bounded domain $\Omega$. Combining \eqref{eq-p1-bound} and
\eqref{eq-p2-bound} with the estimate for $P_2$, we obtain \eqref{P-W2r}.
\end{proof}
\begin{remark}
	Under the assumptions $\mathbf{u}\in B_q^{\beta,\infty}([0,T]\times\Omega)$ and $f\in
	L^p(0,T;B_p^{\alpha,\infty}(\Omega\times\RR))$, the right-hand side of \eqref{P-W2r} is integrable in time,
	so the pressure integrability required in Theorem \ref{renormalization} is consistent with the regularity of
	$\mathbf{u}$ and $f$.
\end{remark}
\begin{remark}\label{re-pressure-normalization}
	A word on the boundary condition in Lemma \ref{lem:pressure-elliptic}. Taking the divergence of the momentum
	equation yields a Poisson equation for the pressure but does not by itself determine its boundary values; the
	no-slip condition formally leads to a Neumann-type boundary relation whose interpretation is delicate at the
	regularity of weak solutions considered here. For this reason the required integrability
	$P\in L^1(0,T;L^r(\Omega))$ with $\frac1r=\frac1q+\frac1{p_1}$ is listed explicitly as an assumption in
	Theorem \ref{renormalization}, while Lemma \ref{lem:pressure-elliptic} uses the solution of the corresponding
	Poisson problem with the Dirichlet condition $P|_{\partial\Omega}=0$---the same strategy as in Yu's treatment
	of the energy equality for the single Navier--Stokes equations \cite{yu2018}. The relation between this
	Dirichlet representative and the pressure determined by the weak momentum equation, and whether the required
	$L^r$ bound can be derived from the physical boundary relation alone, are left open.
\end{remark}



	Hence, we consider the following decomposition 
	\begin{align*}
		\int_{\Omega}\,(\nabla P\theta_h)^\epsilon\cdot(\theta_h\uu)^\epsilon dx
		&=-\int_{\Omega}\,
		(P\,\nabla\theta_h)^\epsilon\cdot(\theta_h\uu)^\epsilon\,dx-\int_\Omega (P\theta_h)^\epsilon \mathrm{div}(\theta_h\uu)^\epsilon dx\\
		&=D_{1}+D_2.
	\end{align*}
	
	Term $D_2$ can be controlled
	\begin{align*}
		D_2
		&=-\int_\Omega (P\theta_h)^\epsilon \mathrm{div}(\theta_h\uu)^\epsilon dx\\
		&=\int_\Omega (P\theta_h)^\epsilon (\mathrm{div}(\theta_h\uu^\epsilon)-\mathrm{div}(\theta_h\uu)^\epsilon) dx
		-\int_\Omega (P\theta_h)^\epsilon \mathrm{div}(\theta_h\uu^\epsilon) dx\\
		&=D_{21}+D_{22}.
	\end{align*}
	We focus on term $D_{22}$ in the following ones 
	\begin{align*}
		|D_{22}|
		&=\Big|-\int_\Omega (P\theta_h)^\epsilon \mathrm{div}(\theta_h\uu^\epsilon) dx\Big|\\
		&\leq\Big|\int_\Omega (P\theta_h)^\epsilon \nabla\theta_h \uu^\epsilon dx\Big| +\Big|\int_\Omega (P\theta_h)^\epsilon \theta_h
		\mathrm{div}\uu^\epsilon dx\Big|\\
		&=\Big|\!\!\int_{\Omega}\, (P\theta_h)^\epsilon\,\nabla\theta_h\cdot
		\big(\uu^\epsilon(x)-\uu^\epsilon(\sigma(x))\big)\,dx\Big|
		+\Big|\!\!\int_{\Omega}\, (P\theta_h)^\epsilon\,\nabla\theta_h\cdot\uu^\epsilon(\sigma(x))\,dx\Big|\\
		&\leq 
		\frac{C}{h}\|P\|_{L^r(\Omega)}
		\|\nabla\uu^\epsilon\|_{L^q(0,T;L^q(\Omega))}\,
		h^{1-\frac{1}{q}-\frac{1}{r}}
		+\frac{C}{h}\|P\|_{L^r(\Omega)}
		\|\uu^\epsilon(\sigma(\cdot))\|_{L^q(0,T;L^q(\Omega_h))}\,
		h^{1-\frac{1}{q}-\frac{1}{r}}\\
		&\leq 
		C\,\|P\|_{L^r(\Omega)}
		\|\uu\|_{B_q^{\beta,\infty}([0,T]\times\Omega)}
		\Big(h^{1-\frac{1}{r}-\frac{1}{q}}\epsilon^{\beta-1}+\epsilon^{\beta-\frac1q}h^{-\frac1r}\Big),
	\end{align*}
	where we used $\mathrm{div}\uu^\epsilon=(\mathrm{div}\uu)^\epsilon=0$ on the support of $\theta_h$ and the scaled trace
	estimate
	\[
	\|\uu^\epsilon(\sigma(\cdot))\|_{L^q(0,T;L^q(\Omega_h))}
	\leq Ch^{\frac1q}\|\uu^\epsilon\|_{L^q(0,T;L^q(\partial\Omega))}
	\leq Ch^{\frac1q}\epsilon^{\beta-\frac1q}\|\uu\|_{B_q^{\beta,\infty}([0,T]\times\Omega)}.
	\]
	The first term on the right-hand side vanishes provided $\nu>\frac{1-\beta}{1-\frac1r-\frac1q}$, and the second one
	vanishes provided $\nu<r\big(\beta-\frac1q\big)$, the latter being automatic since $\frac1r<\beta-\frac1q$ and $\nu<1$.

	
	Hence, to guarantee the vanishing of this term as \(\epsilon,h\to0\), we require
	\[
	1-\frac{1}{r}-\frac{1}{q} > 1-\beta
	\quad\text{i.e.}\quad 
	\frac{1}{r}<\beta-\frac{1}{q}<1-\frac{1}{q}.
	\]
	
	For the nonlinear term \(\uu\cdot\bar{\rho}\), we have
	\[
	\|\uu\cdot\bar{\rho}\|_{L^r(\Omega)}
	\leq \|\uu\|_{L^q(\Omega)}\|\bar{\rho}\|_{L^{p_1}(\Omega)},
	\qquad \frac{1}{r}=\frac{1}{q}+\frac{1}{p_1}.
	\]
	Thus,
	\[
	\frac{1}{p_1}<1-\frac{2}{q}
	\quad\text{i.e.}\quad 
	p_1>1+\frac{2}{q-2}.
	\]
	
	Applying Lemma~\ref{le-f-estimate}, we obtain
	\[
	\frac{n(p-1)+2p}{n(p-1)+2}
	>1+\frac{2}{q-2}
	\quad\text{i.e.}\quad
	(p-1)(q-n-2)>2.
	\]
	
	Similarly, for the term involving \(j\), by the embedding theorem in a bounded domain, we require
	\[
	p_2>r 
	\quad\text{i.e.}\quad 
	\frac{1}{p_2}<\frac{1}{r}<1-\frac{1}{q},
	\]
	which holds provided that
	\[
	\frac{n(p-1)+2p}{(n-1)(p-1)+2p}
	>1+\frac{1}{q-1}
	\quad\text{i.e.}\quad
	(p-1)(q-n-2)>2,
	\]
	the same condition as for \(\bar\rho\).
	
	Finally, for the first term in \eqref{P-elliptic-new}, we need
	\[
	\frac{q}{2}>r 
	\quad\Longrightarrow\quad 
	\frac{2}{q}<1-\frac{1}{q}
	\quad\text{i.e.}\quad 
	q>3.
	\]
	all the above estimates are valid provided the parameters satisfy
	\begin{align*}
		&p>1,\qquad q>n+2,\qquad (p-1)(q-n-2)>2.
	\end{align*}
	Moreover, since 
	\(
	\frac{1}{r}>\frac{1}{p_2}
	\)
	the regularity exponent \(\beta\) must satisfy
	\[
	\beta>\frac{1}{q}+\frac{1}{r}
	>\frac{1}{q}+\frac{1}{p_2}
	>\frac{1}{q}+\frac{(n-1)(p-1)+2p}{n(p-1)+2p}.
	\]
	For term $D_{21}$, we give a control 
	\begin{align*}
		D_{21}
		&=\int_\Omega (P\theta_h)^\epsilon (\mathrm{div}(\theta_h\uu^\epsilon)-\mathrm{div}(\theta_h\uu)^\epsilon) dx\\
		&\leq C\|P\|_{L^r(\Omega)}\|\uu\|_{B_q^{\beta,\infty}}\epsilon^{\beta-1}h^{1-\frac{1}{r}-\frac{1}{q}}\to0,
	\end{align*}
	as $(\epsilon,h)\to0$, where we used integration by parts and the paired form of the commutator estimate in Lemma
	\ref{le-communicate} together with $\frac1q\leq\frac1r$.
	
	Next, we estimate the term $D_1$
	\begin{align*}
		|D_1| 
		&=\Big|-\int_{\Omega}\, (P\,\nabla\theta_h)^\epsilon\cdot(\theta_h\uu)^\epsilon\,dx\Big|\\
		&=\Big|\int_{h\leq d\leq 2h}(P\nabla\theta_h)^\epsilon(\theta_h\uu)^\epsilon dx\Big|\\
		&\leq \frac{C}{h}\|P\|_{L^r(\Omega)}\|\uu\|_{L^q(\Omega_{3h})}h^{1-\frac{1}{r}-\frac{1}{q}}\\
		&\leq C\|P\|_{L^r(\Omega)}\|\uu\|_{B_q^{\beta,\infty}([0,T]\times\Omega)}h^{\beta-\frac{1}{r}-\frac{1}{q}},
	\end{align*}
	where we used $|\nabla\theta_h|\leq\frac Ch$ and the boundary strip estimate
	$\|\uu\|_{L^q(\Omega_{3h})}=\|\uu-\uu(\sigma(\cdot))\|_{L^q(\Omega_{3h})}\leq Ch^\beta\|\uu\|_{B_q^{\beta,\infty}([0,T]\times\Omega)}$.
	This tends to zero because $\beta>\frac1q+\frac1r$.

	Therefore, we obtain 
	\begin{eqnarray}\label{t4}
		\lim_{\epsilon,h\to0}\int_{0}^{T}\int_{\Omega}\phi(t)\,(\nabla P\theta_h)^\epsilon\cdot(\theta_h\uu)^\epsilon dxdt= 0.
	\end{eqnarray}

	\textbf{Estimate of $I_5$.} For the drag force term, we compute
	\begin{align*}
		I_5:=&\int_{0}^{T}\int_\Omega\phi(t)(\theta_h\uu)^\epsilon\cdot\left(\theta_h\int_{\RR}(\uu-\vv)f\,d\vv\right)^\epsilon dxdt\\
		=&\int_{0}^{T}\int_\Omega\phi(t)(\theta_h\uu)^\epsilon\cdot\big[(\theta_h\uu\bar{\rho})^\epsilon-(\theta_h j)^\epsilon\big]\,dxdt
		=:E_1-E_2.
	\end{align*}
	Decompose \(E_1\) as
	\begin{align*}
		E_1
		&=\int_{0}^{T}\int_\Omega
		\phi(t)(\theta_h\uu)^\epsilon\cdot\big[(\theta_h\uu\bar{\rho})^\epsilon-(\theta_h\uu)^\epsilon\bar{\rho}^\epsilon\big]\,dxdt
		+\int_{0}^{T}\int_\Omega\phi(t)|(\theta_h\uu)^\epsilon|^2\bar{\rho}^\epsilon\,dxdt\\
		&=E_{11}+E_{12}.
	\end{align*}
	The commutator term \(E_{11}\) can be estimated by
	\[
	\begin{aligned}
		|E_{11}|
		&\leq \Big|\int_{0}^{T}\int_\Omega
		\phi(t)(\theta_h\uu)^\epsilon\cdot((\theta_h\uu)^\epsilon-(\theta_h\uu))(\bar{\rho}^\epsilon-\bar{\rho})\,dxdt\Big|\\
		&\qquad +\Big|\int_{0}^{T}\int_\Omega \phi(t)(\theta_h\uu)^\epsilon\cdot\omega_\epsilon(\theta_h\uu,\bar{\rho})\,dxdt\Big|
	\end{aligned}
	\]
	where
	\[
	\omega_\epsilon(f,g)(x)
	=\int_{\Omega}m_\epsilon(y)\,[g(x-y)-g(x)]\,[f(x-y)-f(x)]\,dy.
	\]
	Note that 
	\[\|\uu\|_{L^q(0,T;L^q(\Omega))}\leq C,\quad \|\bar{\rho}\|_{L^\infty(0,T; L^{p_1}(\Omega))}\leq C.\, \]
If
\[
\frac{1}{p_1}+\frac{2}{q}\le1\quad\Longleftrightarrow\quad (p-1)(q-n-2)\ge2,
\]
then H\"older's inequality with exponents $(q,q,p_1)$ applies to both terms on the right-hand side of the estimate of $E_{11}$: the first
one is bounded by
\[
C\|(\theta_h\uu)^\epsilon\|_{L^q(0,T;L^q(\Omega))}\,
\|(\theta_h\uu)^\epsilon-\theta_h\uu\|_{L^q(0,T;L^q(\Omega))}\,
\|\bar{\rho}^\epsilon-\bar{\rho}\|_{L^{p_1}((0,T)\times\Omega)}
\leq C\epsilon^{\beta}\|\uu\|_{B_q^{\beta,\infty}}\,o(1)\longrightarrow0,
\]
where the Besov regularity of $\uu$ provides the rate $\epsilon^\beta$,
while $\bar\rho\in L^{p_1}((0,T)\times\Omega)$ with $p_1<\infty$ implies $\|\bar{\rho}^\epsilon-\bar{\rho}\|_{L^{p_1}}\to0$ by the
continuity of translations (no rate is needed here); the commutator term is estimated in the same way,
\[
\Big|\int_{0}^{T}\int_\Omega \phi(t)(\theta_h\uu)^\epsilon\cdot\omega_\epsilon(\theta_h\uu,\bar{\rho})\,dxdt\Big|
\le C\epsilon^{\beta}\|\uu\|_{B_q^{\beta,\infty}}
\Big(\sup_{|y|\le2\epsilon}\|\bar{\rho}(\cdot-y)-\bar{\rho}\|_{L^{p_1}((0,T)\times\Omega)}\Big)\longrightarrow0.
\]
Hence $E_{11}\to0$ as $(\epsilon,h)\to0$.
For the term \(E_{12}\) we split the domain into interior and boundary layers
	\[
	\begin{aligned}
		E_{12}
		&=\int_{0}^{T}\int_{d(x,\partial\Omega)\ge2h}\phi(t)|\uu^\epsilon|^2\bar{\rho}^\epsilon\,dxdt+\int_{0}^{T}\int_{h\le
		d(x,\partial\Omega)\le2h}\phi(t)|(\theta_h\uu)^\epsilon|^2\bar{\rho}^\epsilon\,dxdt.
	\end{aligned}
	\]
	Using the same argument as for term $D_{22}$, we conclude
	\[
	\lim_{\epsilon,h\to0}
	\int_{0}^{T}\int_\Omega \phi(t)(\theta_h\uu)^\epsilon\cdot(\theta_h\uu\bar{\rho})^\epsilon\,dxdt
	=\int_{0}^{T}\int_\Omega \phi(t)\uu\cdot(\uu\bar{\rho})\,dxdt.
	\]
	
	The term \(E_2\) is handled analogously. Under the condition
	\[
	\frac{1}{p_2}+\frac{1}{q}\le1\quad\Longleftrightarrow\quad
	(p-1)(q-n-2)>2,
	\]
	one obtains
	\[
	\lim_{\epsilon,h\to0}
	\int_{0}^{T}\int_\Omega \phi(t)(\theta_h\uu)^\epsilon\cdot (\theta_hj)^\epsilon\,dxdt
	=\int_{0}^{T}\int_\Omega \phi(t)\uu\cdot j\,dxdt.
	\]
	Therefore
	\begin{equation}\label{t5_refined}
		\lim_{\epsilon,h\to0}
		\int_{0}^{T}\int_\Omega\phi(t)(\theta_h\uu)^\epsilon\cdot\left(\theta_h\int_{\RR}(\uu-\vv)f\,d\vv\right)^\epsilon dxdt
		=\int_{0}^{T}\int_\Omega\phi(t)\uu\cdot\left(\int_{\RR}(\uu-\vv)f\,d\vv\right)dxdt.
	\end{equation}

	Finally, we estimate the contribution coming from the Vlasov equation.
Let $\chi_R \in C_c^\infty(\RR)$ be a standard cut-off function satisfying
\begin{eqnarray*}
	\chi_R(\vv)=
	\begin{cases}
		1,\quad\text{for } |\vv|\le R,\\
		0,\quad\text{for } |\vv|\ge 2R,
	\end{cases}
	\quad|\nabla_\vv \chi_R|\leq \frac{C}{R}.
\end{eqnarray*}

We take the admissible test function
\begin{align*}
    \Phi_{h,R}(x,\vv,t)=\phi(t)\theta_h(x)\Big(1+\frac{|\vv|^2}{2}\Big)\chi_R(\vv).
\end{align*}
We plug $\Phi_{h,R}$ into the weak formulation of the Vlasov equation:
\begin{align*}
\int_0^T\int_{\Omega}\int_{\RR}
    f\big(\partial_t\Phi_{h,R} + \vv\cdot\nabla_x\Phi_{h,R}
    + ( \uu - \vv )\cdot\nabla_\vv \Phi_{h,R} \big)\,d\vv\,dxdt = 0 .
\end{align*}

A direct computation gives
\begin{align*}
    \partial_t\Phi_{h,R} &= \phi'(t)\theta_h(x)\Big(1+\frac{|\vv|^2}{2}\Big)\chi_R(\vv),\\
	\nabla_x\Phi_{h,R}   &=\phi(t)\nabla_x\theta_h(x)\Big(1+\frac{|\vv|^2}{2}\Big)\chi_R(\vv),\\
    \nabla_\vv\Phi_{h,R} &= \phi(t)\theta_h(x)\Big(\vv\chi_R(\vv)
    +\Big(1+\frac{|\vv|^2}{2}\Big)\nabla_\vv\chi_R(\vv)\Big).
\end{align*}

Therefore
\begin{align*}
&\int_0^T\phi'(t)
    \left(\int_{\Omega}\int_{\RR} 
        f\theta_h(x)\Big(1+\frac{|\vv|^2}{2}\Big)\chi_R(\vv)\,d\vv\,dx\right) dt \\
&\qquad+
\int_{0}^{T}\phi(t)\left(\int_\Omega\int_{\RR}\vv f \nabla_x\theta_h(x)\Big(1+\frac{|\vv|^2}{2}\Big)\chi_R(\vv)d\vv dx\right)dt\\
&\qquad+
\int_0^T\phi(t)
    \left(\int_{\Omega}\int_{\RR}
        (\uu-\vv)\cdot
        \Big(1+\frac{|\vv|^2}{2}\Big)\nabla_\vv\chi_R(\vv)\, \theta_h(x)f
        \,d\vv\,dx\right) dt \\
&\qquad+
\int_0^T\phi(t)
    \left(\int_{\Omega}\int_{\RR}
        (\uu-\vv)\cdot\vv\, \theta_h(x)f \chi_R(\vv)\,d\vv\,dx\right) dt
:=\sum_{i=1}^{4} G_i=0 .
\end{align*}

\textbf{Estimate of $G_1$.}
Using the finite kinetic energy of $f$, the term $G_1$ could be controlled, i.e.
\begin{align}\label{G_1}
	\lim_{R\to\infty}\lim_{h\to0}\int_0^T\phi'(t)
    \left(\int_{\Omega}\int_{\RR} 
        f\theta_h(x)\Big(1+\frac{|\vv|^2}{2}\Big)\chi_R(\vv)\,d\vv\,dx\right) dt\notag\\
	=\int_0^T\phi'(t)
    \left(\int_{\Omega}\int_{\RR} 
        f\Big(1+\frac{|\vv|^2}{2}\Big)\,d\vv\,dx\right) dt.
\end{align}

\textbf{Estimate of $G_2$.} We define the truncated macroscopic momentum flux
\begin{align*}
    M_R(x)=\int_{\RR} \vv \left(1+\frac{|\vv|^2}{2}\right)\chi_R(\vv) f(x,\vv) \, d\vv.
\end{align*}
Since the kernel $\vv(1+\frac{|\vv|^2}{2})\chi_R(\vv)$ is bounded and supported in $\{|\vv|\leq 2R\}$,
the proof of Lemma \ref{le-MR} applies verbatim, so that $M_R\in B_p^{\alpha,\infty}(\Omega)$ with
\begin{align*}
    \|M_R\|_{B_p^{\alpha,\infty}(\Omega)}\leq C_R\|f\|_{B_p^{\alpha,\infty}(\Omega\times\RR)}.
\end{align*}
Moreover, $|M_R(x)| \leq C_R \bar{\rho}(x)$ and $\bar\rho\in L^1(\Omega)$;
hence $M_R\in L^1(\Omega)\cap B_p^{\alpha,\infty}(\Omega)$, uniformly with respect to $h$. Then
\begin{align*}
    &\int_{0}^{T}\phi(t)\left(\int_\Omega\int_{\RR}\vv f \nabla_x\theta_h(x)\Big(1+\frac{|\vv|^2}{2}\Big)\chi_R(\vv)\, d\vv \, dx\right)dt\\
    &\qquad=\int_{0}^{T}\phi(t)\left(\int_\Omega\nabla_x\theta_h(x)\cdot M_R(x)\, dx\right) dt\\
    &\qquad=-\frac{1}{h}\int_{0}^{T}\phi(t)\left(\int_{h\leq d(x,\partial\Omega)\leq 2h}
    M_R(x)\cdot\nu(\sigma_x)\,\kappa'\Big(\frac{d(x,\partial\Omega)}{h}\Big)\, dx\right)dt,
\end{align*}
where $\nu(\sigma_x)$ denotes the outward unit normal to $\partial\Omega$ at the nearest point $\sigma_x$,
since $\nabla d(x)=-\nu(\sigma(x))$.

Thanks to the trace condition $\alpha p>1$ assumed in Theorem \ref{renormalization},
Lemmas \ref{le-trace} and \ref{le-layer} apply componentwise to $M_R$: the trace $\operatorname{tr}(M_R)\in L^p(\partial\Omega)$ is well
defined and
\begin{align*}
    \lim_{h\to 0}\frac{1}{h}\int_{h\leq d(x,\partial\Omega)\leq 2h} M_R(x)\,\kappa'\Big(\frac{d(x,\partial\Omega)}{h}\Big)\, dx
    =\int_{\partial\Omega} \operatorname{tr}(M_R)(x)\, d\sigma_x.
\end{align*}
Furthermore, the specular reflection boundary condition \eqref{Boundary conditions}, understood in the trace sense,
implies that $\operatorname{tr}(M_R)$ is tangential to $\partial\Omega$:
indeed, the change of variables $\vv\mapsto\vv^*$ in the integral shows that $M_R$ coincides with its reflection in the normal direction,
so its normal component vanishes. Therefore $\operatorname{tr}(M_R)\cdot\nu=0$ on $\partial\Omega$, and
\begin{align}\label{G_2}
    \lim_{R\to\infty}\lim_{h\to 0}\int_{0}^{T}\phi(t)\left(\int_\Omega\int_{\RR}\vv f
    \nabla_x\theta_h(x)\Big(1+\frac{|\vv|^2}{2}\Big)\chi_R(\vv)\, d\vv \, dx\right)dt=0.
\end{align}

\textbf{Estimate of $G_3$.} We decompose $G_3$ as follows:
\begin{align*}
	G_3
	&=\int_0^T\phi(t)
    \left(\int_{\Omega}\int_{\RR}
        (\uu-\vv)\cdot
        \Big(1+\frac{|\vv|^2}{2}\Big)\nabla_\vv\chi_R(\vv)\, \theta_h(x)f
        \,d\vv\,dx\right) dt\\
	&=\int_0^T\int_\Omega\int_{\RR}\phi(t)\theta_h f\uu\left(1+\frac{|\vv|^2}{2}\right)\nabla_\vv\chi_R(\vv) dxd\vv dt\\
	&\quad-\int_0^T\int_\Omega\int_{\RR}\phi(t)\theta_h f\vv\left(1+\frac{|\vv|^2}{2}\right)\nabla_\vv\chi_R(\vv) dxd\vv dt\\
	&:= G_{31}-G_{32}.
\end{align*}
For term $G_{31}$, note that 
\begin{align*}
	|\phi(t)\theta_h f\uu\left(1+\frac{|\vv|^2}{2}\right)\nabla_\vv\chi_R(\vv)|\leq \|\phi\|_{C^0}|\vv |f |\uu|\|\chi_R\|_{C^1}.
\end{align*}
Using Lebesgue dominated convergence theorem, if 
\begin{align*}
	\frac{1}{p_2}+\frac{1}{q}\le1\quad\Longleftrightarrow\quad
	(p-1)(q-n-2)>2,
\end{align*}
we have 
\begin{align*}
	\lim_{R\to\infty}\lim_{h\to0}\int_0^T\int_\Omega\int_{\RR}\phi(t)\theta_h f\uu\left(1+\frac{|\vv|^2}{2}\right)\nabla_\vv\chi_R(\vv) dxd\vv
	dt=0.
\end{align*}
Then for term $G_{32}$, notice that 
\begin{align*}
	\int_{\RR}f\vv\left(1+\frac{|\vv|^2}{2}\right)\nabla_\vv\chi_R(\vv)d\vv
	&\leq\frac{C}{R}\int_{R\leq|\vv|\leq 2R}\left(1+\frac{|\vv|^2}{2}\right)f\vv d\vv\\
	&\leq2C\int_{R\leq|\vv|\leq 2R}\left(1+\frac{|\vv|^2}{2}\right)f d\vv.
\end{align*}
Since $\int_{\RR}f\left(1+\frac{|\vv|^2}{2}\right)d\vv\in L^1((0,T)\times\Omega)$, it is easy to derive 
\begin{align*}
	\lim_{R\to\infty}\lim_{h\to0}\int_0^T\int_\Omega\int_{\RR}\phi(t)\theta_h f\vv\left(1+\frac{|\vv|^2}{2}\right)\nabla_\vv\chi_R(\vv) dxd\vv
	dt=0.
\end{align*}
Thus, we obtain 
\begin{align}\label{G_3}
	\lim_{R\to\infty}\lim_{h\to0}\int_0^T\phi(t)
    \left(\int_{\Omega}\int_{\RR}
        (\uu-\vv)\cdot
        \Big(1+\frac{|\vv|^2}{2}\Big)\nabla_\vv\chi_R(\vv)\, \theta_h(x)f
        \,d\vv\,dx\right) dt=0.
\end{align}

\textbf{Estimate of $G_4$.} The first of $G_4$ is similar to the estimate $G_{31}$,
and using the finite kinetic energy of $f$ again, one obtains easily
\begin{align}\label{G_4}
	\lim_{R\to\infty}\lim_{h\to0}\int_0^T
    \int_{\Omega}\int_{\RR}
        \phi(t)(\uu-\vv)\cdot\vv\theta_h(x)f \chi_R(\vv)d\vv dx dt=\int_0^T\phi(t)
    \left(\int_{\Omega}\int_{\RR}
        (\uu-\vv)\cdot\vv f d\vv dx\right) dt.
\end{align}

Thanks to the estimates \eqref{G_1}-\eqref{G_4}, letting $h\to 0$ first and then $R\to\infty$, we obtain
\begin{align*}
&\int_0^T\phi'(t)
    \left(\int_{\Omega}\int_{\mathbb{R}^n} 
        f\Big(1+\frac{|\vv|^2}{2}\Big)\,d\vv\,dx\right) dt \\
&\qquad+
\int_0^T\phi(t)
    \left(\int_{\Omega}\int_{\mathbb{R}^n}
        (\uu-\vv)\cdot\vv\, f \,d\vv\,dx\right) dt
=0 .
\end{align*}

We now rewrite the second term. Using the identity
\begin{align*}
(\uu-\vv)\cdot\uu 
    = (\uu-\vv)\cdot\vv + |\uu-\vv|^2 ,
\end{align*}
we obtain
\begin{align*}
\int_{\Omega}\int_{\RR}
    (\uu-\vv)\cdot\vv\, f\,d\vv\,dx
    &=
\int_{\Omega}\int_{\RR}
    (\uu-\vv)\cdot\uu\, f\,d\vv\,dx
    -
\int_{\Omega}\int_{\RR}
    |\uu-\vv|^2 f\,d\vv\,dx .
\end{align*}

Hence the weak identity becomes
\begin{align*}
&\int_0^T\phi'(t)
    \left(\int_{\Omega}\int_{\RR} 
        f\Big(1+\frac{|\vv|^2}{2}\Big)\,d\vv\,dx\right) dt  \\
&\qquad+
\int_0^T\phi(t)
    \left(
        \int_{\Omega}\int_{\RR}
        (\uu-\vv)\cdot\uu\, f\, d\vv\, dx
        -
        \int_{\Omega}\int_{\RR}
        |\uu-\vv|^2 f\, d\vv\, dx
    \right) dt
=0 .
\end{align*}

Finally, combining this identity with the limit obtained in \eqref{t5_refined},
we conclude that
\begin{align}\label{4.13}
\int_0^T \phi'(t) F(f(t))\,dt
    &+
\int_0^T \phi(t)
        \int_{\Omega} \uu\cdot
            \Big(\int_{\RR} (\uu-\vv) f\,d\vv\Big)\,dxdt
    \notag\\&\qquad=
\int_0^T \phi(t)
        \int_{\Omega}\int_{\RR}
            |\uu-\vv|^2 f \,d\vv\,dxdt ,
\end{align}
where 
\begin{align*}
F(f(t))=\int_{\Omega}\int_{\RR}
        f(t,x,\vv)\Big(1+\frac{|\vv|^2}{2}\Big)\,d\vv\,dx.
\end{align*}

	
	Finally we extend the identity to test functions that do not vanish at $t=0$.
	Under the regularity assumptions of Theorem \ref{renormalization}, the following time continuity properties hold:
	\begin{itemize}
		\item[(i)] since $\beta>\frac1q$,
		the space-time Besov space $B_q^{\beta,\infty}([0,T]\times\Omega)$ embeds into $C([0,T];L^q(\Omega))$ by the Morrey-type embedding in the
		time variable, so that $\uu\in C([0,T];L^q(\Omega))$;
		as $q>n+2$, the map $t\mapsto\int_\Omega\frac12|\uu(t,x)|^2\,dx$ is continuous on $[0,T]$;
		\item[(ii)] from the Vlasov equation and the bound $f(1+\frac{|\vv|^2}{2})\in L^\infty(0,T;L^1(\Omega\times\RR))$,
		the map $t\mapsto\int_{\Omega\times\RR}f(t)\psi\,dx d\vv$ is continuous for every compactly supported test function $\psi$;
		the strong energy inequality in Definition \ref{def weak1} gives
		$\limsup_{t\to s^+}E(t)\le E(s)$ for almost every $s\in[0,T)$ including $s=0$,
		while the weak lower semicontinuity of the kinetic energy gives
		$F(f(s))\le\liminf_{t\to s^+}F(f(t))$;
		combined with the continuity of the fluid energy from (i), this yields
		\[
		\limsup_{t\to s^+}F(f(t))\le F(f(s))\le\liminf_{t\to s^+}F(f(t)),
		\qquad F(f(t)):=\int_{\Omega}\int_{\RR}f(t)\Big(1+\frac{|\vv|^2}{2}\Big)d\vv\,dx,
		\]
		so that $F(f(t))$ is right-continuous at almost every $s\in[0,T)$, in particular at $s=0$.
	\end{itemize}
	Consequently, after a possible redefinition on a time null set,
	$t\mapsto E(t):=\int_\Omega\frac12|\uu|^2\,dx+F(f(t))$ coincides with an absolutely continuous function on $[0,T]$,
	and approximating $\mathbf{1}_{[0,t]}$ by test functions $\phi\in\mathcal{D}(-1,+\infty)$ in the identity obtained from
	\eqref{t1}--\eqref{4.13}
	yields $E(0)-E(t)=\int_0^t\big(\mu\int_\Omega|\nabla\uu|^2dx+\int_{\Omega\times\RR}f|\uu-\vv|^2d\vv dx\big)ds$ for every $t\in[0,T]$;
	the detailed justification of this extension step is given in Step 6 of Section \ref{se-pro-th2}.

	Combining \eqref{t1}--\eqref{4.13} and passing to the limit \(\epsilon,h\to0\),
	we obtain the global energy equality for the incompressible Vlasov--Navier--Stokes system: for every $t\in[0,T]$,
	\[
	\begin{aligned}
		&\int_\Omega \tfrac{1}{2}|\uu(t)|^2\,dx
		+\int_{\Omega}\int_{\RR} f(t)\Big(1+\tfrac{|\vv|^2}{2}\Big)\,d\vv\,dx
		+\int_{0}^{t}\int_\Omega \mu|\nabla\uu|^2\,dxd\tau \\
		&\qquad+\int_{0}^{t}\int_{\Omega\times\RR} f|\uu-\vv|^2\,d\vv\,dxd\tau
		= \int_\Omega \tfrac{1}{2}|\uu_0|^2\,dx
		+\int_{\Omega}\int_{\RR} f_0\Big(1+\tfrac{|\vv|^2}{2}\Big)\,d\vv\,dx.
	\end{aligned}
	\]

\end{proof}

\section{Proof of Theorem \ref{CNSV thm}}\label{se-pro-th2}
To derive the renormalization property in the compressible case,
we establish the following commutator estimates similarly to the
incompressible case.

\begin{lemma}\label{le-error-CNSV}
Let $(\rho,\uu,f)$ satisfy the regularity hypotheses in
Theorem~\ref{CNSV thm}. Then
\begin{align}
	&\|\nabla_x(\vv f)^{\epsilon,\delta,\eta}-\vv^\delta\cdot\nabla_x f^{\epsilon,\delta,\eta}\|_{L^1(0,T;L^p(\Omega\times\RR))}\notag\\
	&\quad\leq C\big(\epsilon^{\alpha-1}\delta\wedge\epsilon^{-1}\delta^{\alpha+1}\big)\W_f(\epsilon,\delta),\\
	&\big\|\nabla_{\vv}\cdot\big((f\rho \uu)^{\epsilon,\delta,\eta}
	-f^{\epsilon,\delta,\eta}\rho^{\epsilon,\eta}\uu^{\epsilon,\eta}\big)\big\|_{L^{s_3}(0,T;L^{s_3}(\Omega;L^p(\RR)))}\notag\\
	&\quad\leq C\Big(\epsilon^{\tau}\big(\epsilon^{\alpha}\delta^{-1}\wedge\delta^{\alpha-1}\big)+\epsilon^{\tau+\beta}\delta^{\alpha-1}\Big)
	\|f\|_{B_p^{\alpha,\infty}([0,T]\times\Omega\times\RR)},\\
	&\big\|\nabla_{\vv}\cdot\big((f\rho \vv)^{\epsilon,\delta,\eta}
	-f^{\epsilon,\delta,\eta}\rho^{\epsilon,\eta}\vv^\delta\big)\big\|_{L^{s_1}(0,T;L^{s_1}(\Omega;L^p(\RR)))}\notag\\
	&\quad\leq C\Big(\epsilon^{\tau}\big(\epsilon^{\alpha}\delta^{-1}\wedge\delta^{\alpha-1}\big)+\delta^{\alpha}\Big)
	\|f\|_{B_p^{\alpha,\infty}([0,T]\times\Omega\times\RR)}.
\end{align}
Here we denote $\frac{1}{p}+\frac{1}{r}=\frac{1}{s_1}<1$,
$\frac{1}{q}+\frac{1}{r}=\frac{1}{s_2}<1$ and $\frac{1}{p}+\frac{1}{q}+\frac{1}{r}=\frac{1}{s_3}<1$.
In these two estimates the constant $C$ depends on
$\|\rho\|_{L^\infty((0,T)\times\Omega)}$,
$\|\rho\uu\|_{B_r^{\tau,\infty}([0,T]\times\Omega)}$ and
$\|\uu\|_{B_q^{\beta,\infty}([0,T]\times\Omega)}$.
In particular, taking $\delta=\epsilon$, the right-hand sides are of order
$\epsilon^{\alpha}$, $\epsilon^{\alpha+\tau-1}+\epsilon^{\alpha+\beta+\tau-1}$ and
$\epsilon^{\alpha+\tau-1}+\epsilon^{\alpha}$ (each multiplied by
$\|f\|_{B_p^{\alpha,\infty}([0,T]\times\Omega\times\RR)}$), respectively,
and tend to zero provided
$\alpha+\tau>1$ and $\alpha+\beta+\tau>1$.
\end{lemma}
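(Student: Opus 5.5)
The plan is to mirror the proof of Lemma \ref{le-error} term by term. The only new feature is the extra factor $\rho$ (or $\rho\uu$) in the velocity flux, which is handled by nesting the $\omega$-type decompositions. The first estimate contains no $\rho$, since the free-transport part of the compressible Vlasov equation coincides with the incompressible one. It is therefore literally \eqref{eq-comm-1}: the splitting \eqref{omegadelta} together with Lemma \ref{le-diff} gives the bound $C(\epsilon^{\alpha-1}\delta\wedge\epsilon^{-1}\delta^{\alpha+1})\W_f(\epsilon,\delta)$. The assumption $f\in B_p^{\alpha,\infty}([0,T]\times\Omega\times\RR)\subset L^p(0,T;B_p^{\alpha,\infty})$ makes $\W_f$ finite by Lemma \ref{le-Wf}.

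For the second estimate, set $g:=\rho\uu\in B_r^{\tau,\infty}([0,T]\times\Omega)$; this is the quantity whose regularity is assumed. Since $g$ does not depend on $\vv$, I would write
\[
(fg)^{\epsilon,\delta,\eta}-f^{\epsilon,\delta,\eta}g^{\epsilon,\eta}=\omega_{\epsilon,\eta}(g,f^\delta)-(f^\delta-f^{\epsilon,\delta,\eta})(g-g^{\epsilon,\eta}),
\]
apply $\nabla_\vv$, and estimate both pieces by H\"older with exponents $(p,r)$ exactly as for $A_{21}$ and $A_{22}$. The $g$-increments contribute $\epsilon^\tau\|g\|_{B_r^{\tau,\infty}}$, and the $\nabla_\vv f^\delta$-increments contribute $\epsilon^\alpha\delta^{-1}\wedge\delta^{\alpha-1}$ via the two routes of Lemma \ref{le-diff}. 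This yields the term $\epsilon^\tau(\epsilon^\alpha\delta^{-1}\wedge\delta^{\alpha-1})$. The product $g^{\epsilon,\eta}=(\rho\uu)^{\epsilon,\eta}$ still has to be replaced by $\rho^{\epsilon,\eta}\uu^{\epsilon,\eta}$. The leftover is the product commutator
\[
(\rho\uu)^{\epsilon,\eta}-\rho^{\epsilon,\eta}\uu^{\epsilon,\eta}=\omega_{\epsilon,\eta}(\rho,\uu)-(\rho-\rho^{\epsilon,\eta})(\uu-\uu^{\epsilon,\eta}),
\]
which is $O(\epsilon^{\tau+\beta})$ in $L^{s_2}$ by H\"older with exponents $(r,q)$. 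It multiplies $\nabla_\vv f^{\epsilon,\delta,\eta}$, whose $L^p$ norm is $\le C\delta^{\alpha-1}$. This produces the term $\epsilon^{\tau+\beta}\delta^{\alpha-1}$, and the final Lebesgue exponent is $s_3$.

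The third estimate is the compressible analogue of \eqref{eq-comm-3}, now carrying a factor $\rho$. Here I would split
\[
(f\rho\vv)^{\epsilon,\delta,\eta}-f^{\epsilon,\delta,\eta}\rho^{\epsilon,\eta}\vv^\delta
=\big[(F\rho)^{\epsilon,\eta}-F^{\epsilon,\eta}\rho^{\epsilon,\eta}\big]+\rho^{\epsilon,\eta}\big[(f\vv)^{\epsilon,\delta,\eta}-f^{\epsilon,\delta,\eta}\vv^\delta\big],
\qquad F:=(f\vv)^\delta .
\]
The first bracket is handled like the second estimate, with $\rho\in B_r^{\tau,\infty}$ in place of $g$. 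It gives $\epsilon^\tau(\epsilon^\alpha\delta^{-1}\wedge\delta^{\alpha-1})$ in $L^{s_1}$, the weight $\vv$ being harmless after the $\omega_\delta$ decomposition of Lemma \ref{le-error}. In the second bracket, $\rho^{\epsilon,\eta}$ is bounded by $\|\rho\|_{L^\infty}$, and the remaining part is exactly \eqref{eq-comm-3}. It is $O(\delta^\alpha)$ in $L^p$, hence also in $L^{s_1}$ on the bounded domain.

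Setting $\delta=\epsilon$ then gives the stated rates. The step I expect to be delicate is the second estimate. There the $\nabla_\vv$ derivative must fall on $f$ while all $(t,x)$-increments fall on $\rho\uu$, $\rho$ or $\uu$. I must also check that the weight $\vv$ inside $F$ does not destroy the $L^p(\RR)$ bounds. If it does, I would insert the truncation $\chi_R$, which is harmless because the commutators are only ever paired with the compactly supported $\Phi$ in the renormalization argument.
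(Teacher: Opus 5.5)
Your proposal is essentially the paper's proof. The drag commutator is split through $(\rho\uu)^{\epsilon,\eta}$ into an $f$--$(\rho\uu)$ commutator of order $\epsilon^{\tau}(\epsilon^{\alpha}\delta^{-1}\wedge\delta^{\alpha-1})$ plus the product commutator $(\rho\uu)^{\epsilon,\eta}-\rho^{\epsilon,\eta}\uu^{\epsilon,\eta}$ paired with $\nabla_\vv f^{\epsilon,\delta,\eta}$. The particle part is split through $(f\vv)^{\epsilon,\delta,\eta}\rho^{\epsilon,\eta}$, which reduces the remainder to \eqref{eq-comm-3}. The weight $\vv$ is neutralized by the same appeal to the compact support of $\Phi$.

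One correction concerns your justification for finiteness of $\W_f$. The inclusion $B_p^{\alpha,\infty}([0,T]\times\Omega\times\RR)\subset L^p(0,T;B_p^{\alpha,\infty}(\Omega\times\RR))$ is false, because in the joint space the supremum over shifts sits outside the time integral. So you cannot get the stated order $\epsilon^{\alpha}\|f\|_{B_p^{\alpha,\infty}([0,T]\times\Omega\times\RR)}$ for the first estimate from Lemma \ref{le-Wf}. Instead, rerun the $A_{11}$, $A_{12}$ bounds in $L^p((0,T)\times\Omega\times\RR)$ via Minkowski, measuring each increment directly in the joint norm. The paper's ``applies verbatim'' passes over the same point.
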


\begin{proof}
The first estimate is identical to \eqref{eq-comm-1} in Lemma \ref{le-error}:
the transport commutator does not involve the drag force,
so the density $\rho$ plays no role in it and the proof of
Lemma \ref{le-error} applies verbatim.
We therefore only estimate the commutator generated by the drag force term
$\nabla_{\vv}\cdot\big(f\rho(\uu-\vv)\big)$
in the renormalization procedure.
Compared with the incompressible case, the only new difficulty
comes from the presence of the density $\rho$,
which produces triple products.

We consider the commutator generated by the drag force term
$\nabla_{\vv}\cdot(f\rho(\uu-\vv))$,
namely
\begin{align*}
\mathcal{R}_{\epsilon,\delta,\eta}
:=
\nabla_{\vv}\cdot
\Big(
(f\rho(\uu-\vv))^{\epsilon,\delta,\eta}
-
f^{\epsilon,\delta,\eta}\rho^{\epsilon,\eta}(\uu-\vv)^{\epsilon,\eta,\delta}
\Big).
\end{align*}
We decompose it into the fluid velocity part and the particle velocity part as
\begin{align*}
\mathcal{R}_{\epsilon,\delta,\eta}
=
\mathcal{R}^{\uu}_{\epsilon,\delta,\eta}
-
\mathcal{R}^{\vv}_{\epsilon,\delta,\eta},
\end{align*}
where
\begin{align*}
\mathcal{R}^{\uu}_{\epsilon,\delta,\eta}
&:=
\nabla_{\vv}\cdot
\Big(
(f\rho \uu)^{\epsilon,\delta,\eta}
-
f^{\epsilon,\delta,\eta}\rho^{\epsilon,\eta}\uu^{\epsilon,\eta}
\Big),\\
\mathcal{R}^{\vv}_{\epsilon,\delta,\eta}
&:=
\nabla_{\vv}\cdot
\Big(
(f\rho \vv)^{\epsilon,\delta,\eta}
-
f^{\epsilon,\delta,\eta}\rho^{\epsilon,\eta}\vv^\delta
\Big).
\end{align*}

\textbf{Estimate of the fluid velocity part $\mathcal{R}^{\uu}_{\epsilon,\delta,\eta}$.}

We decompose
\begin{align*}
&(f\rho \uu)^{\epsilon,\delta,\eta}
-
f^{\epsilon,\delta,\eta}\rho^{\epsilon,\eta}\uu^{\epsilon,\eta}
\\
&=
\big((\rho \uu)f\big)^{\epsilon,\delta,\eta}
-
(\rho \uu)^{\epsilon,\eta} f^{\epsilon,\delta,\eta}
+
\big(
(\rho \uu)^{\epsilon,\eta}
-
\rho^{\epsilon,\eta}\uu^{\epsilon,\eta}
\big)
f^{\epsilon,\delta,\eta}
\\
&= I_{11} + I_{12}.
\end{align*}

For $I_{11}$, arguing exactly as in the incompressible case
(with $\uu$ replaced by $\rho \uu$, so that $\beta$ is replaced by $\tau$), we obtain
\begin{align*}
\|\nabla_\vv\cdot I_{11}\|_{L^{s_1}(0,T;L^{s_1}(\Omega;L^p(\RR)))}
\leq
C\epsilon^{\tau}\big(\epsilon^{\alpha}\delta^{-1}\wedge\delta^{\alpha-1}\big)
\|f\|_{B_p^{\alpha,\infty}([0,T]\times\Omega\times\RR)},
\end{align*}
where $C$ depends on $\|\rho\uu\|_{B_r^{\tau,\infty}([0,T]\times\Omega)}$ and $\frac{1}{p}+\frac{1}{r}=\frac{1}{s_1}<1.$

For $I_{12}$,
using the Besov regularity of $\rho$ and $\uu$ and H\"older's inequality with $\frac{1}{q}+\frac{1}{r}=\frac{1}{s_2}<1$, we infer
\begin{align*}
\|(\rho \uu)^{\epsilon,\eta}
-
\rho^{\epsilon,\eta}\uu^{\epsilon,\eta}\|_{L^{s_2}((0,T)\times\Omega)}
\le
C\,\epsilon^{\tau+\beta}
\|\rho\|_{B^{\tau,\infty}_r([0,T]\times\Omega)}
\|\uu\|_{B^{\beta,\infty}_q([0,T]\times\Omega)}.
\end{align*}
Therefore,
\begin{align*}
\|\nabla_\vv\cdot I_{12}\|_{L^{s_3}(0,T;L^{s_3}(\Omega;L^p(\RR)))}
&\leq
\|(\rho \uu)^{\epsilon,\eta}
-
\rho^{\epsilon,\eta}\uu^{\epsilon,\eta}\|_{L^{s_2}((0,T)\times\Omega)}\|\nabla_{\vv}f^{\epsilon,\delta,\eta}\|_{L^p(0,T;B_p^{\alpha,\infty}(\Omega\times\RR))}
\\
&\le
C\epsilon^{\tau+\beta}\delta^{\alpha-1}\|f\|_{B_p^{\alpha,\infty}([0,T]\times\Omega\times\RR)},
\end{align*}
where $\frac{1}{p}+\frac{1}{q}+\frac{1}{r}=\frac{1}{s_3}<1$.

Collecting the above estimates, we conclude that
\[
\|\mathcal{R}^\uu_{\epsilon,\delta,\eta}\|_{L^{s_3}(0,T;L^{s_3}(\Omega;L^p(\RR)))}
\leq
C\Big(\epsilon^{\tau}\big(\epsilon^{\alpha}\delta^{-1}\wedge\delta^{\alpha-1}\big)+\epsilon^{\tau+\beta}\delta^{\alpha-1}\Big)
\|f\|_{B_p^{\alpha,\infty}([0,T]\times\Omega\times\RR)}
.
\]

\textbf{Estimate of the particle velocity part $\mathcal{R}^{\vv}_{\epsilon,\delta,\eta}$.}

We similarly decompose
\begin{align*}
&(f\rho \vv)^{\epsilon,\delta,\eta}
-
\rho^{\epsilon,\eta} f^{\epsilon,\delta,\eta} \vv^\delta
\\
&=\big(f\rho \vv\big)^{\epsilon,\delta,\eta}
-
(f\vv)^{\epsilon,\delta,\eta} \rho^{\epsilon,\eta}
+
\big(
(f\vv)^{\epsilon,\delta,\eta}
-
f^{\epsilon,\delta,\eta}\vv^{\delta}
\big)
\rho^{\epsilon,\eta}\\
&= I_{21} + I_{22}.
\end{align*}

We first consider $I_{21}$.
Applying $\nabla_\vv$ and using Minkowski's inequality, we obtain
\begin{align*}
&\|\nabla_\vv\cdot I_{21}\|_{L^{s_1}(0,T;L^{s_1}(\Omega;L^p(\RR)))}\\
&\leq\|\nabla_\vv\cdot\omega_{\epsilon,\eta}(\rho,(f\vv)^\delta)\|_{L^{s_1}(0,T;L^{s_1}(\Omega;L^p(\RR)))}\\
&\quad+
\|\nabla_\vv\cdot((f\vv)^\delta-(f\vv)^{\epsilon,\delta,\eta})(\rho-\rho^{\epsilon,\eta})\|_{L^{s_1}(0,T;L^{s_1}(\Omega;L^p(\RR)))}\\
&\leq\|\nabla_\vv\cdot\omega_{\epsilon}(\rho,(f\vv)^\delta)\|_{L^{s_1}(0,T;L^{s_1}(\Omega;L^p(\RR)))}\\
&\quad+
\|\nabla_\vv\cdot((f\vv)^\delta-(f\vv)^{\epsilon,\delta})(\rho-\rho^{\epsilon})\|_{L^{s_1}(0,T;L^{s_1}(\Omega;L^p(\RR)))}\\
&\leq
C\epsilon^{\tau}\big(\epsilon^{\alpha}\delta^{-1}\wedge\delta^{\alpha-1}\big)\|\rho\|_{B_r^{\tau,\infty}([0,T]\times\Omega)}\|f\|_{B_p^{\alpha,\infty}([0,T]\times\Omega\times\RR)},
\end{align*}
where in the last step we used the two-route estimate of Lemma \ref{le-diff} for the
$\vv$-derivative of $(f\vv)^\delta$ (the extra factor $(1+|\vv|)$ produced by differentiating
$(f\vv)^\delta$ in $\vv$ is harmless, since all commutators are eventually tested against the
compactly supported function $\Phi$).

For $I_{22}$, using Lemma \ref{le-error} we similarly obtain
\begin{align*}
\|\nabla_\vv\cdot I_{22}\|_{L^{s_1}(0,T;L^{s_1}(\Omega;L^p(\RR)))}
&\leq
\|\rho^{\epsilon,\eta}\|_{L^r((0,T)\times\Omega)}\|\nabla_\vv\cdot\big((f\vv)^{\epsilon,\delta,\eta}
-
f^{\epsilon,\delta,\eta}\vv^{\delta}\big)\|_{L^p((0,T)\times\Omega\times\RR)}
\\
&\leq\|\rho^{\epsilon}\|_{L^r((0,T)\times\Omega)}\|\nabla_\vv\cdot\big((f\vv)^{\epsilon,\delta}
-
f^{\epsilon,\delta}\vv^{\delta}\big)\|_{L^p((0,T)\times\Omega\times\RR)}\\
&\leq
C \delta^{\alpha}\|\rho\|_{B_r^{\tau,\infty}([0,T]\times\Omega)}\|f\|_{B_p^{\alpha,\infty}([0,T]\times\Omega\times\RR)}.
\end{align*}

Collecting the above estimates, we conclude that
\[
\|\mathcal{R}^\vv_{\epsilon,\delta,\eta}\|_{L^{s_1}(0,T;L^{s_1}(\Omega;L^p(\RR)))}
\leq
C\Big(\epsilon^{\tau}\big(\epsilon^{\alpha}\delta^{-1}\wedge\delta^{\alpha-1}\big)+\delta^{\alpha}\Big)
\|f\|_{B_p^{\alpha,\infty}([0,T]\times\Omega\times\RR)}
.
\]

\end{proof}

Now we start the proof of Theorem \ref{CNSV thm}.
\begin{proof}[Proof]
	We first show the renormalization property \eqref{eq-renorm-2}, following the same line as in the incompressible case.
	For any test function $\Theta\in\D((0,T)\times\Omega\times\RR)$, the weak formulation of the Vlasov equation reads
	\begin{equation*}
		\int_{0}^{T}\int_\Omega\int_{\RR}f(\partial_t\Theta+\vv\nabla_x\Theta+\rho(\uu-\vv)\nabla_\vv\Theta)dxd\vv dt=0.
	\end{equation*}
	Choosing the same test function as in the incompressible case,
	$\Theta=\theta_h(\mathcal{R}'(f_h^{\epsilon,\delta,\eta})\Phi)^{\epsilon,\delta,\eta}$
	with $f_h:=\theta_h f$, $\Phi\in \D((0,T)\times\Omega\times\RR)$
	and $\mathcal{R}\in \mathcal{S}_k(\R^+;\R^+)$, $k\in\big[2,2p\big(1-\frac1q-\frac1r\big)\big]$,
	and integrating by parts, we arrive at the analogue of the identity obtained in the incompressible case,
	with the drag terms carrying the additional factor $\rho^{\epsilon,\eta}$
	and with the same boundary term involving $\nabla_x\theta_h$.
	This boundary term vanishes identically for all sufficiently small $h$ and $\epsilon$
	by the same support-separation argument as before:
	the spatial support of $(\mathcal{R}'(f_h^{\epsilon,\delta,\eta})\Phi)^{\epsilon,\delta,\eta}$ is contained in
	$\{d(x,\partial\Omega)\ge d_0/2\}$ for all $\epsilon<d_0/4$,
	while $\nabla_x\theta_h$ is supported in $\{h\le d(x,\partial\Omega)\le 2h\}$, so the two supports are disjoint.
	Consequently, for all $h,\epsilon<d_0/8$ the function $f_h$ can be replaced by $f$ in the remaining integrands;
	moving all commutators to the right-hand side and applying Lemma \ref{le-error-CNSV}, we obtain
	\begin{align*}
		&\biggl| \int_{0}^{T} \int_{\Omega} \int_{\RR} \mathcal{R}(f^{\epsilon,\delta,\eta}) (\partial_t \Phi + \mathbf{v}^\delta \cdot \nabla_x
		\Phi + \rho^{\epsilon,\eta}(\mathbf{u}^{\epsilon,\eta} - \mathbf{v}^\delta) \cdot \nabla_{\mathbf{v}} \Phi) \\
		&\quad + \Phi(\nabla_\vv\cdot\mathbf{v}^\delta) (\rho^{\epsilon,\eta}f^{\epsilon,\delta,\eta} \mathcal{R}'(f^{\epsilon,\delta,\eta}) -
		\rho^{\epsilon,\eta}\mathcal{R}(f^{\epsilon,\delta,\eta})) \, d\mathbf{v} \, dx \, dt \biggr|\\
		&\leq C\Big(\big(\epsilon^{\alpha-1}\delta\wedge\epsilon^{-1}\delta^{\alpha+1}\big)\Big)\W_f(\epsilon,\delta)\\
		&\quad+C\Big(\epsilon^{\tau}\big(\epsilon^{\alpha}\delta^{-1}\wedge\delta^{\alpha-1}\big)
		+\epsilon^{\tau+\beta}\delta^{\alpha-1}+\delta^{\alpha}\Big)
		\|f\|_{B_p^{\alpha,\infty}([0,T]\times\Omega\times\RR)}.
	\end{align*}
	Here the range of $k$ comes from pairing $\mathcal{R}'(f^{\epsilon,\delta,\eta})\Phi$ against the commutators in
	$L^p$, $L^{s_1}$ and $L^{s_3}$: the sharpest requirement is $\big(\frac{k}{2}-1\big)s_3'\leq p$, that is,
	$k\leq 2p\big(1-\frac1q-\frac1r\big)$.
	Choosing $\delta=\epsilon$ and using that $\alpha+\tau>1$ and $\alpha+\beta+\tau>1$,
	the right-hand side tends to zero as $\epsilon,\delta\to0$,
	which gives the renormalization property \eqref{eq-renorm-2}.
\end{proof}

In the remainder of this section, for notational simplicity, we write
\begin{align*}
    f^\epsilon(x,t)=m_\epsilon *_x m_\epsilon *_t f(x,t)
\end{align*}
for the joint space-time mollification of a function $f(x,t)$; that is,
the time mollification parameter $\eta$ of Section \ref{se-pre} is taken
equal to the space parameter $\epsilon$.
For the commutator estimates, we introduce
\begin{align*}
    \omega_\epsilon(f,g)(x,t)
    = \int_{\epsilon}^{2\epsilon} \int_\Omega m_\epsilon(y)m_\epsilon(\tau)\, \Delta_\epsilon f \cdot \Delta_\epsilon g \,dy\,d\tau,
\end{align*}
where $\Delta_\epsilon f:=f(x-y,t-\tau)-f(x,t)$, and $\Delta_\epsilon g$ is defined analogously.

Here we introduce a new function $\Phi^{h,\epsilon}=\theta_h((\phi(t)\theta_h\uu)^\epsilon)^\epsilon$,
where $\theta_h(x)$ is the cutoff function mentioned before. Note that $\phi(t)\in\mathcal{D}(0,+\infty)$ is a test function,
where $\mathcal{D}(0,+\infty)$ denotes the class of compactly supported smooth functions in $(0,+\infty)$.
In particular, $\phi$ vanishes near $t=0$.
However, it is needed to extend the result for $\phi(t)\in\mathcal{D}(-1,+\infty)$.

Using $\Phi^{h,\epsilon}$ to test the second equation of \eqref{CNSVsystem}, one can deduce that
\begin{eqnarray*}
	\int_{0}^{T}\int_\Omega \Phi^{h,\epsilon}\left((\rho\uu)_t + \nabla_x\cdot(\rho\uu\otimes\uu) + \nabla_x P(\rho) - \mu \Delta_x
	\uu-\lambda\nabla_x \mathrm{div}\uu  \displaystyle{+\int_{\RR} \rho(\uu - \vv) f d\vv}\right)dxdt=0,
\end{eqnarray*}

which in turn yields
\begin{align}\label{CNSV proof Main}
	\int_{0}^{T}\int_{\Omega} \phi(t)(\theta_h \uu)^\epsilon \Bigg[
	& (\theta_h\rho\uu)_t 
	+ \theta_h\nabla_x\cdot(\rho\uu\otimes\uu) 
	+ \theta_h\nabla_x P(\rho) \notag \\
	& - \mu \theta_h\Delta_x \mathbf{u}
	- \lambda\theta_h\nabla_x \mathrm{div}\uu
	+ \theta_h\int_{\mathbb{R}^n} \rho(\mathbf{u} - \mathbf{v}) f \, d\mathbf{v}
	\Bigg]^\epsilon dxdt = 0.
\end{align}

\begin{proof}[Proof]
	\textbf{Step 1.}
	The first term in \eqref{CNSV proof Main} shows that 
	\begin{align}\label{A}
		\int_{0}^{T}\int_\Omega \phi(t)(\theta_h\uu)^\epsilon (\theta_h\rho\uu)^\epsilon_t dxdt
		&=\int_{0}^{T}\int_\Omega
		\phi(t)(\theta_h\uu)^\epsilon\left((\theta_h\rho\uu)^\epsilon_t-(\rho^\epsilon(\theta_h\uu)^\epsilon)_t\right)dxdt\notag\\
		&\quad+ \int_{0}^{T}\int_\Omega\phi(t)(\theta_h\uu)^\epsilon (\rho^\epsilon(\theta_h\uu)^\epsilon)_t dxdt\notag\\
		&= A +\int_{0}^{T}\int_\Omega\phi(t)\rho_t^\epsilon|(\theta_h\uu)^\epsilon|^2 dxdt\notag\\
		&\quad+\int_{0}^{T}\int_\Omega\phi(t)\rho^\epsilon
		\partial_t\frac{|(\theta_h\uu)^\epsilon|^2}{2}dxdt.
	\end{align}


	Similarly, the second term in \eqref{CNSV proof Main} gives us
	\begin{align}\label{B}
		&\int_{0}^{T}\int_\Omega \phi(t)(\theta_h \uu)^\epsilon 
		\, (\theta_h\mathrm{div}(\rho\uu\otimes\uu))^\epsilon \, dxdt\notag\\
		&=-\int_{0}^{T}\int_\Omega \phi(t)(\theta_h\uu)^\epsilon(\nabla\theta_h(\rho\uu\otimes\uu))^\epsilon dxdt
		-\int_{0}^{T}\int_\Omega \phi(t)\nabla(\theta_h\uu)^\epsilon(\theta_h(\rho\uu\otimes\uu))^\epsilon dxdt\notag\\
		&=\int_{0}^{T}\int_\Omega
		\phi(t)(\theta_h\uu)^\epsilon(\nabla\theta_h(\rho\uu\otimes\uu)^\epsilon\notag\\
		&\qquad-(\nabla\theta_h(\rho\uu\otimes\uu))^\epsilon)dxdt\notag\\
		&\quad+\int_{0}^{T}\int_\Omega
		\phi(t)(\theta_h\uu)^\epsilon\nabla\theta_h(((\rho\uu)^\epsilon\otimes\uu^\epsilon)-(\rho\uu\otimes\uu)^\epsilon)dxdt\notag\\
		&\quad-\int_{0}^{T}\int_\Omega \phi(t)(\theta_h\uu)^\epsilon\nabla\theta_h((\rho\uu)^\epsilon\otimes\uu^\epsilon)dxdt\notag\\
		&\quad+\int_{0}^{T}\int_\Omega \phi(t)\nabla(\theta_h\uu)^\epsilon
		((\rho\uu)^\epsilon\otimes(\theta_h\uu)^\epsilon-(\rho\uu\otimes\theta_h\uu)^\epsilon)dxdt-\int_{0}^{T}\int_\Omega
		\phi(t)\nabla(\theta_h\uu)^\epsilon(\rho\uu)^\epsilon\otimes(\theta_h\uu)^\epsilon dxdt\notag\\
		&=B_1+B_2+B_3\notag\\
		&\quad-\int_{0}^{T}\int_\Omega
		\phi(t)(\theta_h\uu)^\epsilon\nabla\theta_h((\rho\uu)^\epsilon\otimes\uu^\epsilon)dxdt-\int_{0}^{T}\int_\Omega
		\phi(t)\nabla(\theta_h\uu)^\epsilon(\rho\uu)^\epsilon\otimes(\theta_h\uu)^\epsilon dxdt.
	\end{align}

	Combining \eqref{A}-\eqref{B}, the first two terms in \eqref{CNSV proof Main} are given by 
	
	\begin{eqnarray}\label{8.5}
		\int_{0}^{T}\!\!\int_\Omega \phi(t)\left(\frac{1}{2}\rho^\epsilon|(\theta_h\uu)^\epsilon|^2\right)_t dxdt + A+
		B_1+B_2+B_3-\int_{0}^{T}\!\!\int_\Omega \phi(t)(\theta_h\uu)^\epsilon\nabla\theta_h((\rho\uu)^\epsilon\otimes\uu^\epsilon)dxdt.
	\end{eqnarray}
	Here the term $\int_{0}^{T}\int_\Omega\phi(t)\rho_t^\epsilon|(\theta_h\uu)^\epsilon|^2/2\,dxdt$ from \eqref{A}
	has been cancelled against the last integral in \eqref{B}
	through the mollified mass equation $\partial_t\rho^\epsilon+\mathrm{div}(\rho\uu)^\epsilon=0$.

	\textbf{Step 2.}
	Since 
	\begin{align*}
		(fg)^\epsilon-f^\epsilon g^\epsilon=(f^\epsilon-f)(g-g^\epsilon)+\omega_\epsilon (f,g),
	\end{align*}
	term $A$ could be controlled by 
	\begin{align*}
		|A|
		&\leq\left|\int_{0}^{T}\int_\Omega
		\phi(t)(\theta_h\uu)^\epsilon\partial_t\left[(\rho^\epsilon-\rho)((\theta_h\uu)^\epsilon-(\theta_h\uu))\right]dxdt\right|+\left|\int_{0}^{T}\int_\Omega\phi(t)(\theta_h\uu)^\epsilon\partial_t\omega_\epsilon(\rho,(\theta_h\uu))dxdt\right|\\
		&\leq\left|\int_{0}^{T}\int_\Omega \partial_t\phi(t)(\rho^\epsilon-\rho)((\theta_h\uu)^\epsilon-(\theta_h\uu))(\theta_h\uu)^\epsilon
		dxdt\right|\\
		&\quad+\left|\int_{0}^{T}\int_\Omega\phi(t)(\rho^\epsilon-\rho)((\theta_h\uu)^\epsilon-(\theta_h\uu))\partial_t(\theta_h\uu)^\epsilon
		dxdt\right|\\
		&\quad+\left|\int_{0}^{T}\int_\Omega\partial_t\phi(t)\omega_\epsilon(\rho,(\theta_h\uu))(\theta_h\uu)^\epsilon
		dxdt\right|+\left|\int_{0}^{T}\int_\Omega\phi(t)\omega_\epsilon(\rho,(\theta_h\uu))\partial_t(\theta_h\uu)^\epsilon dxdt\right|\\
		&\leq C\|\phi(t)\|_{C^1}\|\rho\|_{B_r^{\tau,\infty}([0,T]\times\Omega)}\|\uu\|_{B_q^{\beta,\infty}([0,T]\times\Omega)}^2
		\epsilon^{\tau+\beta}\\
		&\quad+
		C\|\phi\|_{C^0}\|\rho\|_{B_r^{\tau,\infty}([0,T]\times\Omega)}\|\uu\|_{B_q^{\beta,\infty}([0,T]\times\Omega)}\epsilon^{\tau+2\beta-1}\to0
	\end{align*}
	as $\epsilon\to0$, if $\tau+2\beta-1>0$.
	
	Note that both products in $B_1$ are supported on the strip $\{h\le d(x,\partial\Omega)\le 2h\}$ (the second one
	up to the $O(\epsilon)$-enlargement caused by the mollification, with $\epsilon\le h$). Therefore, with
	$\frac1b=1-\frac2q-\frac1r$ and $\frac1s=\frac1q+\frac1r$,
	\begin{align*}
		|B_1|
		&=\left|\int_{0}^{T}\int_\Omega
		\phi(t)(\theta_h\uu)^\epsilon(\nabla\theta_h(\rho\uu\otimes\uu)^\epsilon-(\nabla\theta_h(\rho\uu\otimes\uu))^\epsilon)dxdt\right|\\
		&\leq C\|\phi\|_{C^0}\|(\theta_h\uu)^\epsilon\|_{L^q(0,T;L^q(\Omega_{3h}))}
		\Big(\|\nabla\theta_h\|_{L^b(0,T;L^b(\Omega))}\|(\rho\uu\otimes\uu)^\epsilon\|_{L^s(0,T;L^s(\Omega))}\notag\\
		&\qquad+\|(\nabla\theta_h(\rho\uu\otimes\uu))^\epsilon\|_{L^{q'}(0,T;L^{q'}(\Omega))}\Big)\\
		&\leq C\|\phi\|_{C^0}\|\rho\|_{L^\infty((0,T)\times\Omega)}
		\|\uu\|_{B_q^{\beta,\infty}([0,T]\times\Omega)}^3\,h^{\beta-\frac2q-\frac1r}\to0,
	\end{align*}
	where, for the first summand, H\"older's inequality is used with exponents $(q,b,s)$ such that
	$\frac1q+\frac1b+\frac1s=1$; for the second one, $\frac1q+\frac1{q'}=1$, Jensen's inequality
	$\|(\nabla\theta_h(\rho\uu\otimes\uu))^\epsilon\|_{L^{q'}}\le\|\nabla\theta_h(\rho\uu\otimes\uu)\|_{L^{q'}}
	\le\|\nabla\theta_h\|_{L^b}\|\rho\uu\otimes\uu\|_{L^s}$ since $\frac1b+\frac1s=1-\frac1q=\frac1{q'}$.
	We also used $|\nabla\theta_h|\le C/h$ (hence $\|\nabla\theta_h\|_{L^b}\le Ch^{-1+\frac1b}=Ch^{-\frac2q-\frac1r}$),
	$\|\rho\uu\otimes\uu\|_{L^s}\le\|\rho\uu\|_{L^r}\|\uu\|_{L^q}\le C$,
	and the boundary strip estimate
	$\|\uu\|_{L^q(\Omega_{3h})}\le Ch^\beta\|\uu\|_{B_q^{\beta,\infty}([0,T]\times\Omega)}$, which follows from
	$\uu=0$ on $\partial\Omega$. This term therefore vanishes as soon as $\beta>\frac2q+\frac1r$, without
	consuming any $\nu$-window.
	Meanwhile
	\begin{align*}
		(\rho\uu\otimes\uu)^\epsilon - ((\rho\uu)^\epsilon\otimes\uu^\epsilon)
		= ((\rho\uu)^\epsilon - \rho\uu)\otimes(\uu^\epsilon - \uu)
		+ \!\!\int_{\epsilon}^{2\epsilon}\int_\Omega 
		m_\epsilon(y)m_\epsilon(\tau)\Delta_{\epsilon}(\rho\uu)\otimes
		\Delta_{\epsilon}\uu\,dy\,d\tau.
	\end{align*}
	Similarly, for $B_2$, 
	\begin{align*}
		|B_2|
		&= \left|\int_{0}^{T}\int_\Omega
		\phi(t)(\theta_h\uu)^\epsilon\nabla\theta_h(((\rho\uu)^\epsilon\otimes\uu^\epsilon)-(\rho\uu\otimes\uu)^\epsilon)dxdt\right|\\
		&\leq \left|\int_{0}^{T}\int_\Omega\phi(t)((\rho\uu)^\epsilon-\rho\uu)\otimes(\uu^\epsilon-\uu)\nabla\theta_h (\theta_h\uu)^\epsilon
		dxdt\right|\\
		&\quad+\left|\int_{0}^{T}\int_\Omega\phi(t)\left(\int_{\epsilon}^{2\epsilon}\int_\Omega 
		m_\epsilon(y)m_\epsilon(\tau)\Delta_{\epsilon}(\rho\uu)\otimes
		\Delta_{\epsilon}\uu\,dy\,d\tau\right)\nabla\theta_h (\theta_h\uu)^\epsilon dxdt\right|\\
		&\leq
		C\|\phi\|_{C^0}\|\rho\uu\|_{B_r^{\tau,\infty}([0,T]\times\Omega)}\|\uu\|_{B_q^{\beta,\infty}([0,T]\times\Omega)}\|\uu\|_{L^q(0,T;L^q(\Omega))}\epsilon^{\tau+\beta}h^{-\frac2q-\frac1r}\to0,
	\end{align*}
	as $(\epsilon,h)\to0$, if $\nu<\frac{\tau+\beta}{\frac2q+\frac1r}$ (note that $\frac2q+\frac1r<\frac{2}{n+2}\le\frac12<1$,
	since the hypothesis $\big(1-\frac1p\big)\big(\frac{rq}{r+q}-n\big)>2$ implies $\frac1q+\frac1r<\frac1{n+2}$).
	Here we used the H\"older inequality with $\nabla\theta_h$ in $L^{\frac{qr}{qr-2r-q}}(\Omega)$ on the boundary strip. Since $\nabla\uu$ is bounded in $L^2(0,T;L^2(\Omega))$, we have 
	\begin{align*}
		|B_{3}|
		&=\left|\int_{0}^{T}\int_\Omega \phi(t)\nabla(\theta_h\uu)^\epsilon
		((\rho\uu)^\epsilon\otimes(\theta_h\uu)^\epsilon-(\rho\uu\otimes\theta_h\uu)^\epsilon)dxdt\right|\\
		&\leq C\|\phi\|_{C^0}
		\big\|(\rho\uu)^\epsilon\otimes(\theta_h\uu)^\epsilon-(\rho\uu\otimes\theta_h\uu)^\epsilon\big\|_{L^2(0,T;L^2(\Omega))}
		\big\|\nabla(\theta_h\uu)^\epsilon\big\|_{L^2(0,T;L^2(\Omega))}\\
		&\leq
		C\|\phi\|_{C^0}\|\rho\uu\|_{B_r^{\tau,\infty}([0,T]\times\Omega)}\|\uu\|_{B_q^{\beta,\infty}([0,T]\times\Omega)}\,
		\epsilon^{\tau+\beta}\big(1+h^{\beta-\frac12-\frac1q}\big)\to0,
	\end{align*}
	as $(\epsilon,h)\to0$ if $\nu<\frac{\tau+\beta}{\frac12+\frac1q-\beta}$, where we used
	$L^s(\Omega)\hookrightarrow L^2(\Omega)$ with $\frac1s=\frac1r+\frac1q<\frac1n\leq\frac12$ (a consequence of the
	hypotheses of Theorem \ref{CNSV thm}) and the bound
	$\|\nabla(\theta_h\uu)^\epsilon\|_{L^2(0,T;L^2(\Omega))}\leq C\big(1+h^{\beta-\frac12-\frac1q}\big)$
	established in the estimate of $B_{21}$ in the incompressible case. For the last term in \eqref{8.5}, we estimate as follows
	\begin{align*}
		&\left|-\int_{0}^{T}\!\!\int_\Omega \phi(t)(\theta_h\uu)^\epsilon\nabla\theta_h((\rho\uu)^\epsilon\otimes\uu^\epsilon)dxdt\right|\\
		&\quad\leq\left|\int_{0}^{T}\!\!\int_{\Omega}
		\phi(t)(\theta_h\uu)^\epsilon\nabla\theta_h((\rho\uu)^\epsilon\otimes(\uu^\epsilon(x)-\uu^\epsilon(\sigma(x))))dxdt\right|\\
		&\qquad+\left|\int_{0}^{T}\!\!\int_{\Omega}
		\phi(t)(\theta_h\uu)^\epsilon\nabla\theta_h((\rho\uu)^\epsilon\otimes\uu^\epsilon(\sigma(x)))dxdt\right|\\
		&\quad\leq
		C\|\phi\|_{C^0}\|\uu\|_{L^q(0,T;L^q(\Omega))}\|\rho\uu\|_{L^r(0,T;L^r(\Omega))}\|\uu\|_{B_q^{\beta,\infty}([0,T]\times\Omega)}
		\Big(\epsilon^{\beta-1}h^{1-\frac{2}{q}-\frac{1}{r}}+\epsilon^{\beta-\frac1q}h^{-\frac1q-\frac1r}\Big)\to0,
	\end{align*}
	
	where we used
	\[
	\|\uu^\epsilon-\uu^\epsilon(\sigma(x))\|_{L^q(0,T;L^q(\Omega))}
	\le C(q,\Omega)\,h\,\|\nabla\uu^\epsilon\|_{L^q(0,T;L^q(\Omega))}
	\]
	and the scaled trace estimate
	$\|\uu^\epsilon(\sigma(\cdot))\|_{L^q(0,T;L^q(\Omega_h))}\leq
	Ch^{\frac1q}\epsilon^{\beta-\frac1q}\|\uu\|_{B_q^{\beta,\infty}([0,T]\times\Omega)}$ as in the incompressible case.
	Choosing $h=\epsilon^\nu$ with
	\[
	\frac{qr(1-\beta)}{qr-2r-q}<\nu<\frac{r(q\beta-1)}{q+r},
	\]
	(this interval is nonempty exactly when $\beta>\frac2q+\frac1r$),
	to guarantee the vanishing as $(\epsilon,h)\to0$ we need 
	\begin{align*}
		1> \beta>\frac{2}{q}+\frac{1}{r}.
	\end{align*}
	
	
	
	Passing to the limit  in \eqref{8.5}, we obtain
	\begin{eqnarray}\label{eq-step12-limit}
		\lim_{\epsilon,h\to0}\int_{0}^{T}\int_{\Omega} \phi(t)(\theta_h \uu)^\epsilon \Bigg[\theta_h(\rho\uu)_t +
		\theta_h\nabla_x\cdot(\rho\uu\otimes\uu)\Bigg]^\epsilon dxdt=\int_{0}^{T}\int_\Omega \phi(t)\left(\tfrac{1}{2}\rho|\uu|^2\right)_t \,dxdt.
	\end{eqnarray}

	\textbf{Step 3.} Next, the fourth term in \eqref{CNSV proof Main} gives
	\begin{align}\label{8.6}
		&\mu\int_{0}^{T}\int_\Omega \phi(t)(\theta_h \uu)^\epsilon (\theta_h\Delta_x \uu)^\epsilon \,dxdt\notag\\
		&\quad= -\mu\int_{0}^{T}\int_\Omega \phi(t)\nabla(\theta_h\uu)^\epsilon:(\theta_h\nabla\uu)^\epsilon \,dxdt
		-\mu\int_{0}^{T}\int_\Omega \phi(t)(\theta_h\uu)^\epsilon(\nabla\theta_h\nabla\uu)^\epsilon \,dxdt.
	\end{align}
	We note that the structure of \eqref{8.6} is the same as in the proof of \eqref{t3}, so we use the same argument to obtain the result
	\begin{eqnarray}
		\lim_{\epsilon,h\to0}-\mu\int_{0}^{T}\int_\Omega\phi(t)(\theta_h\uu)^\epsilon\cdot(\theta_h\Delta_x\uu)^\epsilon
		dxdt=\mu\int_{0}^{T}\int_{\Omega}\phi(t)|\nabla\uu|^2dxdt.
	\end{eqnarray}

	The estimate for the fifth term in \eqref{CNSV proof Main} is analogous to that for the fourth term, and thus we obtain
	\begin{equation}\label{eq-lambda-limit}
		\lim_{\epsilon,h\to0}-\lambda\int_{0}^{T}\int_\Omega \phi(t)(\theta_h \uu)^\epsilon (\theta_h\nabla_x\mathrm{div}\,\uu)^\epsilon \,dxdt
		=\lambda\int_{0}^{T}\int_\Omega \phi(t)\,|\mathrm{div}\,\uu|^2 \,dxdt.
	\end{equation}

	\textbf{Step 4.}We estimate the third term in \eqref{CNSV proof Main}, i.e. the pressure term as follows
	
	\begin{align}\label{pressure1}
		\int_{0}^{T}\int_\Omega\phi(t)(\theta_h\uu)^\epsilon (\theta_h\nabla P(\rho))^\epsilon dxdt
		&=\int_{0}^{T}\int_\Omega \phi(t)(\theta_h\uu)^\epsilon((\theta_h\nabla P(\rho))^\epsilon-\theta_h(\nabla P(\rho))^\epsilon) dxdt\notag\\
		&\quad+\int_{0}^{T}\int_\Omega\phi(t)(\theta_h\uu)^\epsilon\theta_h\nabla(P(\rho)^\epsilon-P(\rho^\epsilon)) dxdt\notag\\
		&\quad+ \int_{0}^{T}\int_\Omega\phi(t)\theta_h\nabla P(\rho^\epsilon)((\theta_h\uu)^\epsilon-\theta_h\uu^\epsilon) dxdt\notag\\
		&\quad+\int_{0}^{T}\int_\Omega \phi(t)\theta_h\uu^\epsilon\theta_h\nabla P(\rho^\epsilon) dxdt\notag\\
		&=D_1+D_2+D_3+\gamma\int_{0}^{T}\int_\Omega\phi(t)\theta_h^2\uu^\epsilon(\rho^\epsilon)^{\gamma-1}\nabla\rho^\epsilon dxdt.
	\end{align}
	We now treat the last term of \eqref{pressure1}.
	Integrating by parts in the space variable gives
	\begin{align}\label{pressure2}
		&\gamma\int_{0}^{T}\int_\Omega\phi(t)\theta_h^2\uu^\epsilon(\rho^\epsilon)^{\gamma-1}\nabla\rho^\epsilon dxdt
		=\int_{0}^{T}\int_\Omega\phi(t)\theta_h^2\uu^\epsilon\cdot\nabla P(\rho^\epsilon)\,dxdt\notag\\
		&\quad=-\int_{0}^{T}\int_\Omega\phi(t)\theta_h^2P(\rho^\epsilon)\,\mathrm{div}\,\uu^\epsilon\,dxdt
		-2\int_{0}^{T}\int_\Omega\phi(t)\theta_h\nabla\theta_h\cdot\uu^\epsilon P(\rho^\epsilon)\,dxdt.
	\end{align}
	For the first term on the right-hand side of \eqref{pressure2}, adding and subtracting the mollified term
	$(P(\rho)\mathrm{div}\,\uu)^\epsilon$ gives
	\begin{align}\label{pressure3}
		-\int_{0}^{T}\int_\Omega\phi(t)\theta_h^2P(\rho^\epsilon)\,\mathrm{div}\,\uu^\epsilon\,dxdt
		=D_4-\int_{0}^{T}\int_\Omega\phi(t)\theta_h^2(P(\rho)\mathrm{div}\,\uu)^\epsilon\,dxdt,
	\end{align}
	where
	\begin{align*}
		D_4:=-\int_{0}^{T}\int_\Omega\phi(t)\theta_h^2
		\Big[P(\rho^\epsilon)\,\mathrm{div}\,\uu^\epsilon-(P(\rho)\mathrm{div}\,\uu)^\epsilon\Big]dxdt.
	\end{align*}
	Since $\uu\in L^2(0,T;H_0^1(\Omega))$ and $0<\underline{\rho}\leq\rho\leq\overline{\rho}$,
	the DiPerna--Lions renormalization theory \cite{diperna1989ons} (cf. also Step 6a below),
	applied to the mass equation $\eqref{CNSVsystem}_1$, shows that $H(\rho):=\frac{\rho^\gamma}{\gamma-1}$ satisfies
	the renormalized continuity equation
	\begin{align}\label{eq-renorm-mass}
		\partial_t H(\rho)+\mathrm{div}(H(\rho)\uu)+P(\rho)\,\mathrm{div}\,\uu=0
	\end{align}
	in the sense of distributions, because $\rho H'(\rho)-H(\rho)=P(\rho)$.
	Mollifying both sides of \eqref{eq-renorm-mass} and testing against $\phi(t)\theta_h^2$, we obtain
	\begin{align}\label{pressure4}
		-\int_{0}^{T}\int_\Omega\phi(t)\theta_h^2(P(\rho)\mathrm{div}\,\uu)^\epsilon\,dxdt\notag
		&=\int_{0}^{T}\int_\Omega\phi(t)\theta_h^2\partial_t H(\rho)^\epsilon\,dxdt\notag\\
		&\quad+\int_{0}^{T}\int_\Omega\phi(t)\theta_h^2\,\mathrm{div}(H(\rho)\uu)^\epsilon\,dxdt\notag\\
		&=-\int_{0}^{T}\phi'(t)\int_\Omega\theta_h^2 H(\rho)^\epsilon\,dxdt\notag\\
		&\quad-2\int_{0}^{T}\int_\Omega\phi(t)\theta_h\nabla\theta_h\cdot(H(\rho)\uu)^\epsilon\,dxdt.
	\end{align}
	Thanks to \eqref{pressure1}--\eqref{pressure4}, we obtain
	\begin{align}\label{pressure5}
		\int_{0}^{T}\int_\Omega\phi(t)(\theta_h\uu)^\epsilon (\theta_h\nabla P(\rho))^\epsilon dxdt
		&=D_1+D_2+D_3+D_4
		-\int_{0}^{T}\phi'(t)\int_\Omega\theta_h^2 H(\rho)^\epsilon\,dxdt\notag\\
		&\quad-2\int_{0}^{T}\int_\Omega\phi(t)\theta_h\nabla\theta_h\cdot(H(\rho)\uu)^\epsilon\,dxdt\notag\\
		&\quad-2\int_{0}^{T}\int_\Omega\phi(t)\theta_h\nabla\theta_h\cdot\uu^\epsilon P(\rho^\epsilon)\,dxdt.
	\end{align}
	For term $D_1$, we have 
	\begin{align*}
		|D_1|
		&=\left|\int_{0}^{T}\int_\Omega \phi(t)(\theta_h\uu)^\epsilon((\theta_h\nabla P(\rho))^\epsilon-\theta_h(\nabla P(\rho))^\epsilon)
		dxdt\right|\\
		&\leq\left|\int_{0}^{T}\int_\Omega \phi(t)(\theta_h\uu)^\epsilon((\nabla\theta_h P)^\epsilon-\nabla\theta_h P^\epsilon) dxdt\right|\\
		&\quad+\left|\int_{0}^{T}\int_\Omega\phi(t)\nabla(\theta_h\uu)^\epsilon((\theta_h P)^\epsilon-\theta_h P^\epsilon) dxdt\right|\\
		&\leq C\|\phi\|_{C^0}\|\uu\|_{L^q(0,T;L^q(\Omega))}\|P\|_{C^0}\frac{\epsilon}{h}+
		C\|\phi\|_{C^0}\|\nabla\uu\|_{L^2(0,T;L^2(\Omega))}\|P\|_{C^0}\epsilon h^\frac{1}{2}\to0,
	\end{align*}
	as $(\epsilon,h)\to0$.
	For term $D_2$, since $P\in C[\underline{\rho},\overline{\rho}]$,
	$\uu\in B_q^{\beta,\infty}([0,T]\times\Omega)$ with $q>2$ and $\nabla\uu$ is bounded in $L^2(0,T;L^2(\Omega))$, 
	and thus 
	\begin{align*}
		\lim_{\epsilon,h\to0}\int_{0}^{T}\int_\Omega\phi(t)(\theta_h\uu)^\epsilon\theta_h\nabla(P(\rho)^\epsilon-P(\rho^\epsilon)) dxdt=0.
	\end{align*}
	Using integrating by parts, we give a control on $D_3$
	\begin{align*}
		|D_3|
		&=\left|\int_{0}^{T}\int_\Omega \phi(t)\theta_h\nabla P(\rho^\epsilon)((\theta_h\uu)^\epsilon-\theta_h\uu^\epsilon)dxdt\right|\\
		&\leq\left|\int_{0}^{T}\int_\Omega \phi(t)\theta_h P\mathrm{div}((\theta_h\uu)^\epsilon-\theta_h\uu^\epsilon)dxdt\right|\\
		&\quad+\left|\int_{0}^{T}\int_\Omega\phi(t)\nabla\theta_h P((\theta_h\uu)^\epsilon-\theta_h\uu^\epsilon)dxdt\right|\\
		&\leq C\|\phi\|_{C^0}\|P\|_{C^0}\|\nabla\uu\|_{L^2(0,T;L^2(\Omega))}\,\epsilon h^{-\frac12}+
		C\|\phi\|_{C^0}\|P\|_{C^0}\|\uu\|_{L^q(0,T;L^q(\Omega))}\,\epsilon h^{-1-\frac1q}\to0,
	\end{align*}
	as $(\epsilon,h)\to0$ provided $\nu<\frac{q}{q+1}$. Here we used that the commutator
	$(\theta_h\uu)^\epsilon-\theta_h\uu^\epsilon$ is supported in a strip of width $3h$ near $\partial\Omega$ and satisfies
	$\|(\theta_h\uu)^\epsilon-\theta_h\uu^\epsilon\|_{L^q(0,T;L^q(\Omega))}\leq
	C\frac{\epsilon}{h}\|\uu\|_{L^q(0,T;L^q(\Omega))}$, since
	$|\theta_h(x-y)-\theta_h(x)|\leq C\frac{|y|}{h}$.

	For $D_4$, since $P\in C^1[\underline{\rho},\overline{\rho}]$ and $\rho\in B_r^{\tau,\infty}([0,T]\times\Omega)$,
	the composition $P(\rho)$ belongs to $B_r^{\tau,\infty}([0,T]\times\Omega)$ as well.
	Splitting the integrand into
	\begin{align*}
		P(\rho^\epsilon)\,\mathrm{div}\,\uu^\epsilon-(P(\rho)\mathrm{div}\,\uu)^\epsilon
		&=\big[P(\rho^\epsilon)-(P(\rho))^\epsilon\big]\,\mathrm{div}\,\uu^\epsilon\\
		&\quad+\big[(P(\rho))^\epsilon(\mathrm{div}\,\uu)^\epsilon-(P(\rho)\mathrm{div}\,\uu)^\epsilon\big],
	\end{align*}
	we deduce from Lemma \ref{le-communicate}, the Lipschitz continuity of $P$ on $[\underline{\rho},\overline{\rho}]$
	and the mollification error estimate that
	\begin{align*}
		|D_4|
		&\leq C\|\phi\|_{C^0}
		\big\|P(\rho^\epsilon)-(P(\rho))^\epsilon\big\|_{L^r((0,T)\times\Omega)}
		\|\mathrm{div}\,\uu\|_{L^2(0,T;L^2(\Omega))}\\
		&\quad+C\|\phi\|_{C^0}
		\big\|(P(\rho))^\epsilon(\mathrm{div}\,\uu)^\epsilon-(P(\rho)\mathrm{div}\,\uu)^\epsilon\big\|_{L^1((0,T)\times\Omega)}\\
		&\leq C\|\phi\|_{C^0}\|P(\rho)\|_{B_r^{\tau,\infty}([0,T]\times\Omega)}
		\|\nabla\uu\|_{L^2(0,T;L^2(\Omega))}\,\epsilon^\tau\to0,
		\quad (\epsilon,h)\to0,
	\end{align*}
	where the H\"older inequality is used with $\frac1r+\frac12\leq1$, which follows from $r>n\geq2$;
	note that $\frac{rq}{r+q}<r$, so the condition $\frac{rq}{r+q}>n$ indeed implies $r>n$.
	For the penultimate term in \eqref{pressure5}, since $H(\rho)$ is bounded in $L^\infty((0,T)\times\Omega)$,
	$\uu=0$ on $\partial\Omega$ and the convolution enlarges the support by at most $\epsilon\leq h$,
	the boundary-layer estimate gives
	\begin{align*}
		\left|\int_{0}^{T}\int_\Omega\phi(t)\theta_h\nabla\theta_h\cdot(H(\rho)\uu)^\epsilon\,dxdt\right|
		&\leq \frac{C}{h}\|\phi\|_{C^0}\|H(\rho)\|_{L^\infty((0,T)\times\Omega)}
		\|\uu\|_{L^1(0,T;L^1(\Omega_{3h}))}\\
		&\leq \frac{C}{h}\|\phi\|_{C^0}\|H(\rho)\|_{L^\infty((0,T)\times\Omega)}
		\|\uu\|_{L^q(0,T;L^q(\Omega_{3h}))}\,h^{1-\frac1q}\\
		&\leq C\|\phi\|_{C^0}\|H(\rho)\|_{L^\infty((0,T)\times\Omega)}
		\|\uu\|_{B_q^{\beta,\infty}([0,T]\times\Omega)}\,h^{\beta-\frac1q}\to0,
	\end{align*}
	provided $\beta>\frac1q$, where we used
	$\|\uu\|_{L^q(\Omega_{3h})}=\|\uu-\uu(\sigma(\cdot))\|_{L^q(\Omega_{3h})}
	\leq Ch^\beta\|\uu\|_{B_q^{\beta,\infty}([0,T]\times\Omega)}$,
	which follows from $\uu=0$ on $\partial\Omega$ and $|x-\sigma(x)|\leq 3h$ on $\Omega_{3h}$,
	$\sigma(x)$ being the nearest-point projection onto $\partial\Omega$.
	For the last term of \eqref{pressure5}, the same boundary-layer argument yields
	\begin{align*}
		\left|\int_{0}^{T}\int_\Omega\phi(t)\theta_h\nabla\theta_h\cdot\uu^\epsilon P(\rho^\epsilon)\,dxdt\right|
		&\leq \frac{C}{h}\|\phi\|_{C^0}\|P\|_{C^0}\|\uu\|_{L^1(0,T;L^1(\Omega_{3h}))}\\
		&\leq C\|\phi\|_{C^0}\|P\|_{C^0}\|\uu\|_{B_q^{\beta,\infty}([0,T]\times\Omega)}\,h^{\beta-\frac1q}\to0,
		\quad h\to0,
	\end{align*}
	provided $\beta>\frac{1}{q}$.
	Thanks to the continuity of $P(\rho)$ again, passing to the limit $(\epsilon,h)\to0$ in \eqref{pressure5},
	consequently, we obtain
	\begin{align}
		\lim_{\epsilon,h\to0}\int_{0}^{T}\int_\Omega\phi(t)(\theta_h\uu)^\epsilon (\theta_h\nabla P(\rho))^\epsilon dxdt
		=-\int_{0}^{T}\phi'(t)\int_\Omega H(\rho)\,dxdt
		=\int_{0}^{T}\int_\Omega\phi(t)\frac{d}{dt}\frac{\rho^\gamma}{\gamma-1}\,dxdt.
	\end{align}

	\textbf{Step 5.} Finally, we estimate the last term in \eqref{CNSV proof Main}
	\begin{align}
		\int_{0}^{T}\int_\Omega 
		\phi(t)(\theta_h\uu)^\epsilon 
		\left(\theta_h\int_{\RR}\rho(\uu-\vv)f\,d\vv\right)^\epsilon 
		dxdt
		&= \int_{0}^{T}\int_\Omega 
		\phi(t)(\theta_h\uu)^\epsilon 
		\left(\theta_h\int_{\RR}\rho\uu f\,d\vv\right)^\epsilon 
		dxdt  \notag \\
		&\quad - \int_{0}^{T}\int_\Omega 
		\phi(t)(\theta_h\uu)^\epsilon 
		\left(\theta_h\int_{\RR}\rho\vv f\,d\vv\right)^\epsilon 
		dxdt  \notag \\
		&= \int_{0}^{T}\int_\Omega 
		\phi(t)(\theta_h\uu)^\epsilon 
		(\theta_h\rho \uu \bar{\rho})^\epsilon 
		dxdt \notag\\
		&\quad- \int_{0}^{T}\int_\Omega
		\phi(t)(\theta_h\uu)^\epsilon (\theta_h\rho j)^\epsilon 
		dxdt , \notag\\
		&=F_1+F_2.
	\end{align}
	
	We decompose $F_1$ as 
	\begin{align*}
		\int_{0}^{T}\int_\Omega 
		\phi(t)(\theta_h\uu)^\epsilon 
		(\theta_h\rho \uu \bar{\rho})^\epsilon 
		dxdt
		&=\int_{0}^{T}\int_\Omega 
		\phi(t)(\theta_h\uu)^\epsilon 
		[(\theta_h\uu\rho \bar{\rho})^\epsilon -(\theta_h\uu\rho)^\epsilon\bar{\rho}^\epsilon]
		dxdt\\
		&\quad+\int_{0}^{T}\int_\Omega 
		\phi(t)(\theta_h\uu)^\epsilon 
		\Big[(\theta_h\uu\rho)^\epsilon-(\theta_h\uu)^\epsilon\rho^\epsilon\Big]\bar{\rho}^\epsilon dxdt\\
		&\quad+\int_{0}^{T}\int_\Omega\phi(t)(\theta_h\uu)^\epsilon\cdot((\theta_h\uu)^\epsilon\rho^\epsilon\bar{\rho}^\epsilon)dxdt\\
		&=F_{11}+F_{12} +  \int_{0}^{T}\int_\Omega\phi(t)((\theta_h\uu)^\epsilon)^2\cdot(\rho^\epsilon\bar{\rho}^\epsilon)dxdt. 
	\end{align*}
	For term $F_{11}$ and term $F_{12}$,
	using $\uu\in B^{\beta,\infty}_q([0,T]\times\Omega)$ and $\rho,\rho\uu\in B^{\tau,\infty}_r([0,T]\times\Omega)$, we have 
	\begin{align*}
		|F_{11}|&\leq
		C\|\phi\|_{C^0}\|\uu\|_{L^q(0,T;L^q(\Omega))}\|\rho\uu\|_{B_r^{\tau,\infty}([0,T]\times\Omega)}\|\bar{\rho}\|_{L^{p_1}(\Omega)}\epsilon^\tau,\\
		|F_{12}|&\leq
		C\|\phi\|_{C^0}\|\uu\|_{L^q(0,T;L^q(\Omega))}\|\rho\|_{B_r^{\tau,\infty}([0,T]\times\Omega)}\|\uu\|_{B_q^{\beta,\infty}([0,T]\times\Omega)}\|\bar{\rho}\|_{L^{p_1}(\Omega)}\epsilon^{\tau+\beta},
	\end{align*}
	provided 
	\begin{align*}
		\frac{1}{p_1}+\frac{1}{q}+\frac{1}{r}\le1\quad&\Longleftrightarrow\quad (1-\frac{1}{p})(\frac{2rq}{r+q}-n)>2,\\
		\frac{1}{p_1}+\frac{2}{q}+\frac{1}{r}\le1\quad&\Longleftrightarrow\quad
		(1-\frac{1}{p})(\frac{2rq}{2r+q}-n)>2.
	\end{align*}
	Thus, we obtain 
	\begin{align}\label{new bar rho}
		\lim_{\epsilon,h\to0}\int_{0}^{T}\int_\Omega 
		\phi(t)(\theta_h\uu)^\epsilon 
		(\theta_h\rho \uu \bar{\rho})^\epsilon 
		dxdt=\int_{0}^{T}\int_\Omega 
		\phi(t)\, \uu
		(\rho \uu \bar{\rho})
		dxdt.
	\end{align}
	
	Similarly, we decompose $F_2$ as 
	\begin{align*}
		\int_{0}^{T}\int_\Omega \phi(t)(\theta_h\uu)^\epsilon (\theta_h\rho j)^\epsilon dxdt
		&=\int_{0}^{T}\int_\Omega \phi(t)(\theta_h\uu)^\epsilon [(\theta_h\rho j)^\epsilon-(\theta_h\rho)^\epsilon j^\epsilon] dxdt\\
		&\quad+\int_{0}^{T}\int_\Omega \phi(t)(\theta_h\uu)^\epsilon (\theta_h\rho)^\epsilon j^\epsilon dxdt\\
		&=F_{21}+\int_{0}^{T}\int_\Omega \phi(t)(\theta_h\uu)^\epsilon (\theta_h\rho)^\epsilon j^\epsilon dxdt.
	\end{align*}
	
	We employ the same argument as before, we have 
	\begin{align*}
		|F_{21}|\leq
		C\|\phi\|_{C^0}\|\uu\|_{L^q(0,T;L^q(\Omega))}\|\rho\|_{B_r^{\tau,\infty}([0,T]\times\Omega)}\|j\|_{L^{p_2}(\Omega)}\epsilon^\tau.
	\end{align*}
	provided 
	\begin{align*}
		\frac{1}{p_2}+\frac{1}{q}+\frac{1}{r}\le1\quad&\Longleftrightarrow\quad\Big(1-\frac{1}{p}\Big)(\frac{rq}{r+q}-n)> 2.
	\end{align*}
	We obtain 
	\begin{align}\label{new j}
		\lim_{\epsilon,h\to0}\int_{0}^{T}\int_\Omega
		\phi(t)(\theta_h\uu)^\epsilon (\theta_h\rho j)^\epsilon 
		dxdt=\int_{0}^{T}\int_\Omega
		\phi(t)\, \uu (\rho j)
		dxdt.
	\end{align}
	

	Thanks to \eqref{A}-\eqref{new j}, by letting the parameters $\epsilon$ and $h$ tend to zero,
	we can pass to the limit in \eqref{CNSV proof Main}. Consequently, we obtain the following integral identity:
	\begin{eqnarray}\label{8.16}
		\int_{0}^{T}\int_\Omega \phi(t)\left(\frac{1}{2}\rho|\uu|^2+\frac{\rho^\gamma}{\gamma-1}\right)_t dxdt 
		+ \int_{0}^{T}\int_\Omega \phi(t)\left(\mu|\nabla_x\uu|^2+\lambda |\mathrm{div}\uu|^2\right)dxdt \notag\\
		+ \int_{0}^{T}\int_{\Omega}\phi(t)\uu \cdot \left(\int_{\RR}\rho(\uu-\vv)f\,d\vv\right) dxdt = 0,
	\end{eqnarray}
	which holds for any test function $\phi\in\mathcal{D}(0,+\infty)$.

The Vlasov equation $\eqref{CNSVsystem}_3$ has the same structure of $\eqref{NSVsystem}_3$, thus we take the same test function 
\begin{align*}
    \Phi_{h,R}(x,\vv,t)=\phi(t)\theta_h(x)\Big(1+\frac{|\vv|^2}{2}\Big)\chi_R(\vv).
\end{align*}

We plug $\Phi_{h,R}$ into the weak formulation of the Vlasov equation:
\begin{align*}
\int_0^T\int_{\Omega}\int_{\RR}
    f\big(\partial_t\Phi_{h,R} + \vv\cdot\nabla_x\Phi_{h,R}
    + \rho( \uu - \vv )\cdot\nabla_\vv \Phi_{h,R} \big)\,d\vv\,dxdt = 0 .
\end{align*}

Therefore
\begin{align*}
&\int_0^T\phi'(t)
    \left(\int_{\Omega}\int_{\RR} 
        f\theta_h(x)\Big(1+\frac{|\vv|^2}{2}\Big)\chi_R(\vv)\,d\vv\,dx\right) dt \\
&\qquad+
\int_{0}^{T}\phi(t)\left(\int_\Omega\int_{\RR}\vv f \nabla_x\theta_h(x)\Big(1+\frac{|\vv|^2}{2}\Big)\chi_R(\vv)d\vv dx\right)dt\\
&\qquad+
\int_0^T\phi(t)
    \left(\int_{\Omega}\int_{\RR}
        \rho(\uu-\vv)\cdot
        \Big(1+\frac{|\vv|^2}{2}\Big)\nabla_\vv\chi_R(\vv)\, \theta_h(x)f
        \,d\vv\,dx\right) dt \\
&\qquad+
\int_0^T\phi(t)
    \left(\int_{\Omega}\int_{\RR}
        \rho(\uu-\vv)\cdot\vv\, \theta_h(x)f \chi_R(\vv)\,d\vv\,dx\right) dt
:=\sum_{i=1}^{4} G_i=0 .
\end{align*}

It is easy to check that 
\begin{align}\label{CNSV G_1-G_2}
	\lim_{R\to\infty}\lim_{h\to0}G_1
	&=\int_0^T\phi'(t)
    \left(\int_{\Omega}\int_{\RR} 
        f\Big(1+\frac{|\vv|^2}{2}\Big)\,d\vv\,dx\right) dt,\\
	\lim_{R\to\infty}\lim_{h\to0}G_2&=0.
\end{align}

The treatment of $G_2$ here is completely analogous to that in Section \ref{se-pro-th1} and likewise relies on the trace condition $\alpha
p>1$
assumed in Theorem \ref{CNSV thm}:
by Lemmas \ref{le-MR}--\ref{le-layer}, the truncated macroscopic momentum flux admits a well-defined trace on $\partial\Omega$,
which is tangential to the boundary by the specular reflection condition \eqref{eq-bdy-comp},
so that the boundary integral vanishes in the limit.

For term $G_3$, we split it as follows
\begin{align*}
	G_3
	&=\int_0^T\phi(t)
    \left(\int_{\Omega}\int_{\RR}
        \rho(\uu-\vv)\cdot
        \Big(1+\frac{|\vv|^2}{2}\Big)\nabla_\vv\chi_R(\vv)\, \theta_h(x)f
        \,d\vv\,dx\right) dt\\
	&=\int_{0}^{T}\int_\Omega\int_{\RR} \phi(t)\theta_h f \rho\uu\left(1+\frac{|\vv|^2}{2}\right)\nabla_\vv\chi_R(\vv) dxd\vv dt\\
	&\quad-\int_{0}^{T}\int_\Omega\int_{\RR} \phi(t)\theta_h f \rho\vv\left(1+\frac{|\vv|^2}{2}\right)\nabla_\vv\chi_R(\vv) dxd\vv dt\\
	&:= G_{31}+G_{32}.
\end{align*}
We employ a similar argument to that for $\eqref{G_3}$ to estimate both terms. One obtains
\begin{align}\label{CNSV G_3}
	\lim_{R\to\infty}\lim_{h\to0}G_{31}=0,\quad \lim_{R\to\infty}\lim_{h\to0} G_{32}=0,
\end{align}
if 
\begin{align*}
	\frac{1}{p_2}+\frac{1}{r}\leq1\quad&\Longleftrightarrow\quad\Big(1-\frac{1}{p}\Big)(r-n)> 2.
\end{align*}
Then for term $G_4$, we also easily obtain 
\begin{align}\label{CNSV G_4}
	\lim_{R\to\infty}\lim_{h\to0}G_4=\int_0^T\phi(t)
    \left(\int_{\Omega}\int_{\RR}
        \rho(\uu-\vv)\cdot\vv\, f \,d\vv\,dx\right) dt.
\end{align}

Combining \eqref{CNSV G_1-G_2}-\eqref{CNSV G_4}, we conclude that
	\begin{align}\label{new 4.13}
		\int_0^T \phi'(t) F(f(t))\,dt
		&+
		\int_0^T \phi(t)
		\int_{\Omega} \uu\cdot
		\Big(\int_{\RR} \rho(\uu-\vv) f\,d\vv\Big)\,dxdt
		\notag\\&\qquad=
		\int_0^T \phi(t)
		\int_{\Omega}\int_{\RR}
		\rho|\uu-\vv|^2 f \,d\vv\,dxdt .
	\end{align}
	where 
	\begin{align*}
		F(f(t))=\int_{\Omega}\int_{\RR}
		f(t,x,\vv)\Big(1+\frac{|\vv|^2}{2}\Big)\,d\vv\,dx.
	\end{align*}

	
	Combining \eqref{8.16} with \eqref{new 4.13}, and integrating by parts in time 
	in the term involving $\phi_t$, we arrive at the local energy balance for the 
	coupled system:
	\begin{align}\label{8.18}
		-\int_{0}^{T}\int_{\Omega} \phi_t(t)
		\left(
		\tfrac{1}{2}\rho|\uu|^{2}
		+\tfrac{\rho^\gamma}{\gamma-1}
		+\int_{\RR} f\Big(1+\tfrac{|\vv|^{2}}{2}\Big)d\vv
		\right) dx\,dt \notag \\
		\quad
		+\int_{0}^{T}\int_{\Omega} \phi(t)
		\left(
		\mu|\nabla\uu|^{2}
		+\lambda|\mathrm{div}\,\uu|^{2}
		+\int_{\RR}\rho f|\uu-\vv|^{2} d\vv
		\right) dx\,dt
		= 0 .
	\end{align}
	
	We note that the hypothesis $(1-\frac{1}{p})\big(\frac{rq}{r+q}-n\big)>2$ of Theorem \ref{CNSV thm} absorbs all three H\"older conditions
	used in the estimates of $F_{11}$, $F_{12}$ and $F_{21}$ above, since $\frac{rq}{r+q}<\frac{2rq}{2r+q}<\frac{2rq}{r+q}$;
	together with $\beta>\frac{1}{r}+\frac{2}{q}$, which provides the compactness required in the drag term,
	this ensures that all commutator errors vanish in the limit.

	\textbf{Step 6.}
	The final step is to extend our result \eqref{8.18} for the test function $\phi(t)\in\mathcal{D}(-1,\infty)$.
	To this end, we need the continuity of the total energy on $[0,T]$.
	
	Let us define the total energy
	\begin{align*}
		E(t)=\int_\Omega
		\Big(\tfrac{1}{2}\rho|\uu|^2+\tfrac{\rho^\gamma}{\gamma-1}
		+\!\int_{\RR}f(1+\tfrac{|\vv|^2}{2})\,d\vv\Big)dx.
	\end{align*}
	Then \eqref{8.18} can be written as
	\begin{align*}
		-\int_0^T \phi_t(t)E(t)\,dt
		+\int_0^T \phi(t)\,D(t)\,dt=0,
	\end{align*}
	where 
	\begin{align*}
		D(t)=\int_\Omega
		\Big(\mu|\nabla\uu|^2+\lambda|\mathrm{div}\uu|^2
		+\!\int_{\RR}\rho f|\uu-\vv|^2\,d\vv\Big)dx.
	\end{align*}
	
	Since $\rho$ and $\uu$ are only known a priori to be weakly continuous in time,
	the continuity of $t\mapsto E(t)$ does not follow directly; we establish it in three substeps.

	\textbf{Step 6a (continuity of the fluid energy).}
	Since $\beta>\frac1r+\frac2q>\frac1q$,
	the Morrey-type embedding in the time variable applied to $\uu\in B_q^{\beta,\infty}([0,T]\times\Omega)$ gives $\uu\in
	C([0,T];L^q(\Omega))$, and hence $\uu\in C([0,T];L^2(\Omega))$ because $q>2$.
	Since $\rho\leq\overline{\rho}$ and $\uu\in L^\infty(0,T;L^2(\Omega))\cap L^2(0,T;H^1(\Omega))$ (the former inclusion follows from
	$\rho\geq\underline{\rho}>0$ and the energy bound $\sqrt{\rho}\,\uu\in L^\infty(0,T;L^2(\Omega))$),
	we have $\rho\uu\in L^2(0,T;L^2(\Omega))$,
	and the continuity equation yields $\partial_t\rho=-\mathrm{div}(\rho\uu)\in L^2(0,T;H^{-1}(\Omega))$;
	together with $\rho\in L^\infty(0,T;L^\gamma(\Omega))$, this implies $\rho\in C_\mathrm{weak}([0,T];L^\gamma(\Omega))$.
	Moreover, the DiPerna--Lions renormalization theory \cite{diperna1989ons},
	applicable since $\uu\in L^1(0,T;W^{1,1}(\Omega))$ and $\rho\in L^\infty((0,T)\times\Omega)$,
	gives $\rho\in C([0,T];L^s(\Omega))$ for every $1\le s<\infty$;
	in particular, the map $t\mapsto\int_\Omega\frac{\rho^\gamma}{\gamma-1}\,dx$ is continuous on $[0,T]$.
	For the kinetic part of the fluid energy,
	the momentum equation yields $\partial_t(\rho\uu)\in L^a(0,T;H^{-m}(\Omega))$ for some $a>1$ and $m\ge1$,
	which combined with $\rho\uu\in L^\infty(0,T;L^2(\Omega))$ gives $\rho\uu\in C_\mathrm{weak}([0,T];L^2(\Omega))$. Writing
	\[
	\int_\Omega \rho|\uu|^2(t)\,dx-\int_\Omega \rho|\uu|^2(t_0)\,dx
	=\int_\Omega \rho\uu(t)\cdot(\uu(t)-\uu(t_0))\,dx
	+\int_\Omega \big(\rho\uu(t)-\rho\uu(t_0)\big)\cdot\uu(t_0)\,dx,
	\]
	the first term on the right-hand side tends to $0$ as $t\to t_0$,
	since $\uu\in C([0,T];L^2(\Omega))$ and $\rho\uu$ is bounded in $L^\infty(0,T;L^2(\Omega))$,
	while the second term tends to $0$ by the weak continuity of $\rho\uu$;
	hence the map $t\mapsto\int_\Omega\frac12\rho|\uu|^2\,dx$ is continuous on $[0,T]$.

	\textbf{Step 6b (continuity of the particle energy).}
	Since $\alpha p>1$,
	the Morrey-type embedding in the time variable applied to $f\in B_p^{\alpha,\infty}([0,T]\times\Omega\times\RR)$ gives $f\in
	C([0,T];L^p(\Omega\times\RR))$.
	Hence, for every $t_0\in[0,T]$ and every $R>0$, the truncated particle energy
	$\int_\Omega\int_{\{|\vv|\le R\}}f(t)\big(1+\frac{|\vv|^2}{2}\big)\,d\vv dx$ is continuous at $t_0$;
	together with the uniform bound
	$f\big(1+\frac{|\vv|^2}{2}\big)\in L^\infty(0,T;L^1(\Omega\times\RR))$ from Definition \ref{def weak},
	this yields the lower semicontinuity
	\[
	\int_\Omega\int_{\RR}f(t_0)\Big(1+\frac{|\vv|^2}{2}\Big)d\vv dx
	\le\liminf_{t\to t_0}\int_\Omega\int_{\RR}f(t)\Big(1+\frac{|\vv|^2}{2}\Big)d\vv dx,
	\qquad t_0\in[0,T].
	\]
	On the other hand, the strong energy inequality in Definition \ref{def weak} gives
	$\limsup_{t\to t_0^+}E(t)\le E(t_0)$ for almost every $t_0\in[0,T)$;
	since the fluid part of $E$ is continuous by Step 6a, the same bound holds for the particle part:
	\[
	\limsup_{t\to t_0^+}\int_\Omega\int_{\RR}f(t)\Big(1+\frac{|\vv|^2}{2}\Big)d\vv dx
	\le\int_\Omega\int_{\RR}f(t_0)\Big(1+\frac{|\vv|^2}{2}\Big)d\vv dx,
	\qquad\text{a.e. } t_0\in[0,T).
	\]
	The two inequalities show that the particle energy is right-continuous at almost every $t_0\in[0,T)$;
	consequently, after a possible redefinition on a time null set,
	$t\mapsto\int_\Omega\int_{\RR}f\big(1+\frac{|\vv|^2}{2}\big)\,d\vv dx$ is continuous on $[0,T]$.

	\textbf{Step 6c (energy equality up to the endpoints).}
	Steps 6a and 6b show that, after a possible redefinition on a time null set,
	$E$ coincides with a continuous function on $[0,T]$, while $D\in L^1(0,T)$ by the energy bounds.
	Approximating the indicator function $\mathbf{1}_{[0,t]}$ by test functions $\phi\in\mathcal{D}(-1,\infty)$ in \eqref{8.18} and passing to
	the limit therefore gives, for this continuous representative,
	\[
	E(t)+\int_0^t D(s)\,ds=E(0)
	\qquad\text{for every } t\in[0,T],
	\]
	and subtracting this identity at $t_1$ from the one at $t_2$ yields the global energy equality
	\[
	E(t_2)+\int_{t_1}^{t_2}\!\!D(t)\,dt = E(t_1),
	\qquad 0\le t_1<t_2\le T,
	\]
	that is,
	\[
	E(t_2)+\int_{t_1}^{t_2}\!\!\int_\Omega
	\Big(\mu|\nabla\uu|^2+\lambda|\mathrm{div}\uu|^2
	+\!\int_{\RR}\rho f|\uu-\vv|^2\,d\vv\Big)dxdt
	=E(t_1).
	\]
	This yields the desired global energy equality on the whole time interval $[0,T]$.

\end{proof}

\section*{Acknowledgments}
The author is deeply grateful to Wentao Cao (Academy for Multidisciplinary Studies, Capital Normal University)
for suggesting this problem and for his constant guidance, careful reading of the manuscript, and many helpful
discussions during the preparation of this work.

\medskip
\noindent\textbf{Data availability.} Data sharing is not applicable to this article,
as no datasets were generated or analysed during the current study.

\end{document}